\newcommand{\cnstKO}{K_{1,\,\delta}}
\newcommand{\cnstKT}{K_{2,\,\delta}}
\newcommand{\cnstKi}{K_{i,\,\delta}}
\newcommand{\eps}{\varepsilon}
\newcommand{\ignore}[1]{}
\newtheorem{prop}{Proposition}[section]
\newtheorem{lemma}[prop]{Lemma}
\newtheorem{remark}[prop]{Remark}
\newtheorem{theorem}[prop]{Theorem}
\newtheorem{corollary}[prop]{Corollary}
\newtheorem{definition}[prop]{Definition}
\newtheorem{assumption}[prop]{Assumption}
\newcommand{\fspace}{Z}
\newcommand{\fnorm}[1]{{\|{#1}\|}_{\!\fspace}}
\newcommand{\tnorm}[1]{{\lfloor{#1}\rfloor}_{\!\fspace}}
\newcommand{\snorm}[1]{\lceil{#1}\rceil_{\!\fspace}}
\newcommand{\VarDer}[2]{{\partial_{#2\,}}{#1}}
\newcommand{\eeq}{=}
\newcommand{\deq}{\,:=}
\newcommand{\teps}{\widetilde{\eps}}
\newcommand{\DO}[1]{{O\at{#1}}}
\newcommand{\Do}[1]{{o\at{#1}}}
\newcommand{\nDo}[1]{{o\nat{#1}}}
\newcommand{\Nset}{\mathbb{N}}
\newcommand{\app}{{\rm app}}
\newcommand{\res}{{\rm res}}
\newcommand{\al}{{\alpha}}
\newcommand{\ga}{{\gamma}}
\newcommand{\si}{{\sigma}}
\newcommand{\pair}[2]{{\left({#1},\,{#2}\right)}}
\newcommand{\npair}[2]{{({#1},\,{#2})}}
\newcommand{\at}[1]{{\left({#1}\right)}}
\newcommand{\nat}[1]{{({#1})}}
\newcommand{\bat}[1]{{\big(#1\big)}}
\newcommand{\Bat}[1]{{\Big(#1\Big)}}
\newcommand{\triple}[3]{{\left({#1},\,{#2},\,{#3}\right)}}
\newcommand{\cinterval}[2]{[#1,\,#2]}%
\newcommand{\cointerval}[2]{[#1,\,#2)}%
\newcommand{\ccinterval}[2]{[#1,\,#2]}%
\newcommand{\bigpar}{\par$\;$\par\noindent}
\newcommand{\abs}[1]{\left|{#1}\right|}
\newcommand{\babs}[1]{\big|{#1}\big|}
\newcommand{\Babs}[1]{\Big|{#1}\Big|}
\renewcommand{\d}{\mathrm{d}}
\newcommand{\calY}{\mathcal{Y}}
\newcommand{\calZ}{\mathcal{Z}}
\newcommand{\mhdraft}{false}
\begin{document}
\title{Self-similar solutions for the {LSW} model with encounters}%
\date{\today}
\author{%
M. Herrmann%
\footnote{Institut f\"{u}r Mathematik, Humboldt-Universit\"{a}t zu
Berlin, Unter den Linden 6, 10099 Berlin, Germany},
B. Niethammer%
\footnote{Mathematical Institute, University of Oxford, 24-29 St.
Giles, Oxford, OX1 3LB, England},
and
J. J. L. Vel\'{a}zquez\footnote{Departamento de Matem\'{a}tica
Aplicada, Facultad de Matem\'{a}ticas, Universidad Complutense,
Madrid 28040, Spain}.
}%
\maketitle
\begin{abstract}
The LSW model with encounters has been suggested by Lifshitz and
Slyozov as a regularization of their classical mean-field model for
domain coarsening to obtain universal self-similar long-time
behavior. We rigorously establish that an exponentially decaying
self-similar solution to this model exist, and show that this
solutions is isolated in a certain function space. Our proof relies
on setting up a suitable fixed-point problem in an appropriate
function space and careful asymptotic estimates of the solution to a
corresponding homogeneous problem.
\end{abstract}
\quad\newline\noindent%
{keywords:} %
\emph{coarsening with encounters}, %
\emph{self-similar solutions},
\emph{kinetics of phase transitions}%
\newline\noindent
\newline\noindent
MSC (2000): %
45K05, 
82C22, 
35Q72 

%
%
\section{Introduction}
%
%
The classical mean-field theory by Lifshitz and Slyozov \cite{LS1}
and Wagner \cite{Wa1} describes domain coarsening of a dilute system
of particles which interact by diffusional mass transfer to reduce
their total interfacial area. It is based on the assumption that
particles interact only via a common mean-field $\theta=\theta(t)$
which yields a nonlocal transport equation for the number density
$f=f(v,t)$ of particles with volume $v$. It is given by
\begin{equation}\label{LSW1}
\frac{\partial f}{\partial t} + \frac{\partial }{\partial v} \left(
\left( -1+\theta\left(  t\right)  v^{1/3}\right)  f\right)  =0,
\qquad v >0,\quad t>0,
\end{equation}
where $\theta(t)$ is determined by the constraint that the total
volume of the particles is preserved in time, i.e.
\begin{equation}\label{LSW2}
\int\limits_0^{\infty} v f(v,t)\,dv = \rho\,.
\end{equation}
This implies that
\begin{equation}
\theta\left(  t\right)     =\frac{1}{\left\langle
v^{1/3}\right\rangle
}=\frac{\int\limits_{0}^{\infty}f\left(  v,t\right)  dv}{\int\limits_{0}^{\infty}%
v^{1/3}f\left(  v,t\right)  dv}\label{S1E1h},
\end{equation}
where
\begin{math}
\left\langle
v^{k}\right\rangle\deq{}m_{k}\deq\int_{0}^{\infty}v^{k}f\left(
v,t\right) dv
\end{math} for $k>0$.
It is observed in experiments that  coarsening systems display
statistical self-similarity over long times, that is the number
density converges towards a unique self-similar form. Indeed, also
the mean-field model \eqref{LSW1}-\eqref{LSW2} has a scale
invariance, which suggests that typical particle volumes grow
proportional to $t$. Going over to self-similar variables one easily
establishes that there exists a whole one-parameter family of
self-similar solutions. All of the members of this family have
compact support and can be characterized by their behavior near the
end of their support: One is infinitely smooth, the others behave
like  a power law. It has been established in \cite{NP2} (cf. also
\cite{GMS} for asymptotics and numerical simulations, \cite{CP1}
results on a simplified problem and \cite{NV52} for some
refinements), that the long-time behavior of solutions to
\eqref{LSW1}-\eqref{LSW2} depends sensitively on the initial data,
more precisely on their behavior near the end of their support.
Roughly speaking, the solution converges to the self-similar
solution which behaves as a power law of power $p< \infty$ if and
only if the data are regularly varying with power $p$ at the end of
their support. The domain of attraction of the infinitely smooth
solution is characterized by a more involved condition \cite{NV52},
which we do not give here since it is not  relevant for the
forthcoming analysis.
\bigpar
This weak selection of self-similar asymptotic states reflects a
degeneracy in the mean-field model which is generally believed to be
due to the fact that the model is valid only in the regime of
vanishing volume fraction of particles \cite{Ni1}. Some effort has
been made to derive corrections to the classical mean-field model in
order to reflect the effect of positive volume fraction, such as
screening induced fluctuations \cite{Ma1,NV6,Ni6}, or to take
nucleation into account \cite{FaNe06a,Me1,Ve1}. A different approach
was already suggested by Lifshitz and Slyozov \cite{LS1} which is to
take the occasional merging of particles (``encounters'') into
account. This leads to the equation
\begin{equation}
\frac{\partial f}{\partial t}+\frac{\partial}{\partial v}\left(
\left( -1+\theta\left(  t\right)  v^{1/3}\right)  f\right) =J[f],
\label{S1E1}
\end{equation}
where $J[f]$ is a typical coagulation term, given by
\begin{equation*}
J[f] =\frac{1}{2}\int\limits_{0}^{v}w\left(  v-v^{\prime}%
,v^{\prime}\right)  f\left(  v-v^{\prime},t\right)  f\left(
v^{\prime },t\right)  dv^{\prime} \,-\, \int\limits_0^{\infty}
w(v,v^{\prime}) f(v^{\prime},t) f(v,t)\,dv^{\prime},
\end{equation*}
with a suitable rate kernel $w$ specified below.
Volume conservation \eqref{LSW2} should still be valid, and since
\begin{equation*}
\int\limits_{0}^{\infty}vJ[f]\left(  v,t\right)  dv=0 %
\end{equation*}
this requires that $\theta$ is again given by \eqref{S1E1h}.
\par
It remains to specify the rate kernel
 $w\left(  v,v^{\prime}\right)  $ which Lifshitz and Slyozov assume
to be dimensionless with respect to rescalings of $v,v^{\prime}$ and
to be additive for large values of $v$ and $v^{\prime}$. For
simplicity we assume -- just as in \cite{LS1} -- that
\begin{equation}
w\left(  v,v^{\prime}\right)  =\left(  \frac{\left\langle
v\right\rangle +\left\langle v^{\prime}\right\rangle }{2}\right)
^{-1}\left(  v+v^{\prime }\right)  =\left\langle v\right\rangle
^{-1}\left(  v+v^{\prime}\right)\,,
\label{S1E3a}%
\end{equation}
that is we obtain a coagulation term with the so-called ``additive
kernel''. Well-posedness of \eqref{S1E1} with this kernel has been
established in \cite{Laur3}.
\par
As explained before, the model \eqref{S1E1}, \eqref{LSW2}  is
relevant in the regime that the volume fraction covered by the
particles is small and hence we assume that
\begin{equation}
\int\limits vf\left(  v,t\right)  dv=\varepsilon \ll 1 \,.\label{S1E4}%
\end{equation}
The system (\ref{S1E1})-(\ref{S1E4}) can now be written in
self-similar variables
\begin{align*}
f\left(  v,t\right)=\frac{\varepsilon}{t^{2}}\Phi\left(  \frac{v}%
{t},\log\left(  t\right)  \right)
,\quad\quad%
z=\frac{v}{t},\quad\quad\tau =\log\left(  t\right)
,\quad\quad%
\theta\left(  t\right)=\frac{\lambda\left(  \tau\right) }{t^{1/3}}
\end{align*}
as
\begin{equation}
\Phi_{\tau}-z\Phi_{z}-2\Phi+\frac{\partial}{\partial z}\left( \left(
-1+\lambda\left(  \tau\right)  z^{1/3}\right)  \Phi\right)
=\varepsilon J[\Phi]\left(  z,\tau\right) \,,\label{S1E7}
\end{equation}
\begin{equation}
\int\limits_0^{\infty} z \Phi(z,\tau)\,dz =1\,, \label{S1E8}
\end{equation}
where
\begin{equation}
\notag%
J[\Phi]  \left(  z,\tau\right)     =\frac{1}{2}\int\limits_{0}^{z}%
z\Phi\left(  z-z^{\prime},\tau\right)  \Phi\left(
z^{\prime},\tau\right) dz^{\prime}
  -\Phi\left(  z,\tau\right)  \int\limits_{0}^{\infty}\left(  z+z^{\prime}\right)
\Phi\left(  z^{\prime},\tau\right)  dz^{\prime}
\end{equation}
and
\begin{equation}
\notag%
\lambda\left(  \tau\right) =\frac{\int\limits_{0}^{\infty}\Phi\left(
z,t\right) dz}{\int\limits_{0}^{\infty}z^{1/3}\Phi\left(  z,t\right)
dz}
\end{equation}
Our  goal in this paper is to study stationary solutions of
\eqref{S1E7}-\eqref{S1E8} in the regime of small $\eps$. We notice
first that the convolution term on the right hand side of
\eqref{S1E7} enforces that any solution must have infinite support.
We also expect that for small $\eps>0$ the solution should be close
-- in an appropriate sense -- to one of the self-similar solutions
of the LSW model, that is \eqref{S1E7} with $\eps=0$. It can be
verified by a stability argument that the only solution of the LSW
model for which this is possible is the smooth one which has the
largest support.
\par
Indeed, we obtain as our main result, that for any given
sufficiently small $\eps>0$ there exists an exponentially decaying
stationary solution to \eqref{S1E7}-\eqref{S1E8}. Moreover, we show
this solution to be isolated, i.e., there is no further solution
with exponential tail in a sufficiently small neighborhood of the
LSW solution. We do not believe, that there exist other
exponentially decaying solutions, but our proof does not exclude
that. However, we conjecture that there are algebraically decaying
stationary solutions as well. We are not yet able to establish a
corresponding result for the model discussed in the present paper,
but can prove this for a simplified model (see \cite{HLN1}).
\bigpar
For the pure coagulation equation, that is \eqref{S1E7} without the
drift term, an exponentially decaying stationary solution exists
only for $\eps=1/2$. For every smaller $\eps$ there exists a
stationary solution with algebraic decay. The domain of attraction
of these self-similar solutions has been completely characterized in
\cite{MP1}, and can also be related to the regular variation of
certain moments of the initial data.
\par
However, the situation here is somewhat different. While the
behavior for large volumes $v$ is determined by the coagulation
term, the tail introduced by the coagulation term is very small, and
the equation behaves - at least in the regime in which we are
working - as the LSW model with a small perturbation. Our analysis
reflects this fact, since we also treat the coagulation term as a
perturbation.
%
%
%
%
\section{Statement of the fixed point problem}
\label{sec:Problem}
%
%
In this section we set up a suitable fixed point problem for
the construction of stationary solutions to
\eqref{S1E7}-\eqref{S1E8}. These solve
\begin{align}
\label{S2E1}%
-z\frac{\partial\Phi}{\partial z}-2\Phi+\frac{\partial}{\partial
z}\left( \left(  -1+\lambda z^{1/3}\right)  \Phi\right)   &
=\varepsilon J[\Phi]\left( z\right),\quad\quad
\int\limits_{0}^{\infty}z\Phi\left(  z\right) dz =1,
\quad\quad\Phi\left(  z\right)   \geq0,%
\end{align}
with $z>0$ and
\begin{equation*}
\lambda=\frac{\int\limits_{0}^{\infty}\Phi\left( z\right)
dz}{\int\limits_{0}^{\infty }z^{1/3}\Phi\left(  z\right) dz}.
\end{equation*}
In the LSW limit $\eps=0$ there exists a family of solutions with
compact support, which can be parametrized by the mean field
$\lambda\in\left[ 3\left(  \frac{1}{2}\right) ^{2/3},\infty\right)
.$ The self-similar solution with the largest support, which is
$[0,1/2]$, is exponentially smooth and is given by
\begin{equation*}
\Phi_{LSW}(z)= %
\left\{%
\begin{array}{lcl}
\displaystyle%
C\exp\at{- \int\limits_0^z
\frac{2-\tfrac{1}{3}\lambda_{LSW}\xi^{-2/3}}{\xi + 1 -
\lambda_{LSW}\xi^{1/3}} \,d\xi }
&\text{for}&z \in [0,\tfrac{1}{2}],%
\\%
0&\text{for}&{z > \tfrac{1}{2}},
\end{array}
\right.
\end{equation*}
with
\begin{equation*}
\notag
\lambda_{LSW}:=3\left(  \frac{1}{2}\right) ^{2/3}
\end{equation*}
and $C$ is a normalization constant chosen such that
$\int_0^{\infty} z \Phi_{LSW}\,dz= 1$. We denote from now on this
solution by $\Phi_{LSW}$. As discussed above  there
 are several physical and mathematical arguments supporting the fact that
such a solution is the only stable one under perturbations of the
model.
\bigpar
The main goal of this paper is to show the following result.
\begin{theorem}
\label{Intro.MainResult} For any sufficiently small
$\lambda_{LSW}-\lambda$ there exists a choice for $\eps$ such that
there exists an exponentially decaying solution to \eqref{S2E1}.
\end{theorem}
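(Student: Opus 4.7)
The plan is to set up the problem as a singular perturbation of the LSW solution $\Phi_{LSW}$, using both $\varepsilon$ and $\lambda_{LSW}-\lambda$ as small parameters. Writing $\Phi=\Phi_{LSW}+g$ and expanding \eqref{S2E1}, I obtain a first-order linear ODE of the form $\mathcal{L}_{\lambda}\, g = \varepsilon\, J[\Phi_{LSW}+g] + R(\lambda-\lambda_{LSW})$, where $\mathcal{L}_{\lambda}$ is the linearization of the LSW drift operator at the parameter $\lambda$ and $R(\lambda-\lambda_{LSW})$ is the explicit inhomogeneity coming from replacing $\lambda_{LSW}$ by $\lambda$ in the drift coefficient. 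Since the LSW model possesses a whole family of compactly supported stationary solutions parametrized by $\lambda$, we expect $\lambda$ to play the role of a secondary unknown whose value is fixed by the normalization constraint $\int z\Phi\,dz = 1$, which in turn produces the relation between $\varepsilon$ and $\lambda_{LSW}-\lambda$.

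The core analytical step is to invert $\mathcal{L}_{\lambda}$ on a suitable weighted function space $\fspace$ of exponentially decaying functions. Because $\mathcal{L}_{\lambda}$ is first-order, its inverse can be written as an explicit integral against a homogeneous solution $u_{\lambda}$. What the proof really needs is a careful asymptotic analysis of $u_{\lambda}(z)$ in three regions: near $z=0$, in a narrow transition layer around $z=1/2$ (the endpoint of the LSW support), and in the far field $z\to\infty$. The goal of these estimates is to show that $\mathcal{L}_\lambda^{-1}$ maps exponentially decaying data back into $\fspace$ with operator norm controlled uniformly as $\lambda\uparrow\lambda_{LSW}$ and $\varepsilon\downarrow 0$.

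Once such a solution operator is available, the full problem reduces to a fixed-point equation $g = \mathcal{L}_\lambda^{-1}\bigl[\varepsilon J[\Phi_{LSW}+g] + R(\lambda-\lambda_{LSW})\bigr]$ on a small ball of $\fspace$. For suitably small $\varepsilon$ and $\lambda_{LSW}-\lambda$, the right-hand side should be a contraction, yielding a unique $g=g(\lambda,\varepsilon)$. Substituting back into the normalization condition $\int z(\Phi_{LSW}+g)\,dz=1$ produces a scalar equation between $\varepsilon$ and $\lambda$; I would solve this by an implicit-function argument, showing that the leading-order contribution of $\varepsilon J[\Phi_{LSW}]$ to the first moment is non-degenerate in $\varepsilon$, so that $\varepsilon$ can be determined uniquely as a function of $\lambda$ for $\lambda_{LSW}-\lambda$ small. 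Positivity of the resulting $\Phi$ then follows from smallness of the perturbation together with the fact that the exponentially decaying tail is generated by the convolution term, which is manifestly nonnegative.

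The step I expect to be the main obstacle is the boundary-layer analysis around $z=1/2$. In the unperturbed LSW problem the drift coefficient $-1+\lambda_{LSW}z^{1/3}$ vanishes there and the solution dies out; when $\lambda$ drops slightly below $\lambda_{LSW}$ the coefficient is small but strictly nonzero, and characteristics slowly cross $z=1/2$ into the region where the exponential tail must live. Matching the compactly supported $O(1)$-core to the exponentially small tail across this transition zone, and tracking how the matching data depend on the pair $(\lambda,\varepsilon)$, is what ultimately forces a definite quantitative relation between $\varepsilon$ and $\lambda_{LSW}-\lambda$, and it is here that the delicate asymptotic estimates of the homogeneous solution $u_\lambda$ (alluded to in the abstract) must carry the weight of the argument.
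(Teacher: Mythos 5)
Your overall shape is right — exponentially weighted space, fixed point, transition layer at $z=\tfrac12$, and an $\eps$--$\de$ matching relation — but the proposal contains two gaps that would prevent the argument from closing as described.

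First, the claim that $\calL_\lambda^{-1}$ maps $\fspace$ into itself ``with operator norm controlled uniformly as $\lambda\uparrow\lambda_{LSW}$'' is false, and this is the crux of the difficulty. The homogeneous solution $\psi$ drops from $O(1)$ for $z<\tfrac12$ to $\exp\nat{-c/\sqrt\de}$ for $z>\tfrac12$, so the norm of $\calL_\lambda^{-1}$ blows up like $\exp\nat{c/\sqrt\de}$: applying it to $O(1)$ data supported in $(\tfrac12,1)$ produces output of size $\exp\nat{c/\sqrt\de}$ on $[0,\tfrac12]$. Since the eventual scaling forces $\eps\sim\exp\nat{-c/\sqrt\de}$, the product $\eps\,\|\calL_\lambda^{-1}\|$ is only $O(1)$, not small, so a naive contraction estimate for $g\mapsto\calL_\lambda^{-1}\nat{\eps J[\Phi_{LSW}+g]}$ does not give a contraction. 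The paper resolves this by (a) absorbing the linear-in-$\Phi$ pieces of the rewritten coagulation term, $-\Phi-m_0 z\Phi$, into the homogeneous operator, which makes $\psi$ depend on the parameter pair $\pair{\eps}{\teps}$ with $\teps=\eps m_0$, and (b) splitting the Variation-of-Constants operator as $I_\app+I_\res$ where $I_\app[h]$ is supported on $\ccinterval{0}{1}$, is proportional to $\psi\nat{\cdot;\eps[h],\teps[h],\de}$, and so depends on $h=\Phi*\Phi$ only through the two scalars $\pair{\eps[h]}{\teps[h]}$, while $I_\res$ has genuinely small norm $O\nat{1/\nat{\de\cnstKO}}$. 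Without identifying this ``rank-deficient'' structure the $O(1)$ part of the operator cannot be controlled, and the contraction in step 3 of your outline fails. Moreover, since $J$ is nonlocal through $m_0=\int\Phi$, there are really two scalar unknowns $\pair{\eps}{\teps}$ and two compatibility conditions, not a single scalar implicit equation as you propose.

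Second, the implicit-function argument as stated is degenerate: the coagulation operator conserves the first moment, $\int_0^\infty z\,J[\Phi_{LSW}]\,dz=0$, so the ``leading-order contribution of $\eps J[\Phi_{LSW}]$ to the first moment'' is identically zero. The non-degeneracy in the paper comes not from the first moment of the forcing, but from the asymptotics of the functionals $g_1,g_2$ in \eqref{Def.FP.Prms.Def}: in particular $\eps\,\VarDer{g_i}{\eps}\approx 2g_i$ (Proposition \ref{Intro.Prop1}), which makes the $2\times2$ system \eqref{Def.FP.Prms} uniquely solvable in the box $U_\delta$. Your boundary-layer intuition is what the paper's estimates on $\widehat\Gamma_i$, $R_\de$ and $\cnstKi$ make precise, but the solvability of the scalar equations hinges on these explicit derivative asymptotics rather than on a moment-balance non-degeneracy.
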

The key idea for proving this theorem is to reduce the problem to a
standard fixed point problem assuming that \eqref{S2E1} is a small
perturbation of $\Phi_{LSW}$.
%
%
%
\subsubsection*{Formal asymptotics as $\varepsilon\rightarrow0$}
%
%
The formal asymptotics of the solution of \eqref{S2E1} whose
existence we prove in this paper was obtained in \cite{LS1}. We
recall it here for convenience. Such a solution $\Phi$ is expected
to be close to the solution $\Phi_{LSW}$ as
$\varepsilon\rightarrow0.$ Notice, however that $\Phi_{LSW}$
vanishes for $z\geq\frac{1}{2}.$ Therefore, in order to approximate
$\Phi$ for $z\geq\frac{1}{2}$ Lifshitz and Slyozov
 approximate (\ref{S2E1}) by means of%
\begin{equation}
-z\frac{\partial\Phi}{\partial z}-2\Phi+\frac{\partial}{\partial
z}\left( \left(  -1+\lambda z^{1/3}\right)  \Phi\right) =\varepsilon
J[\Phi_{LSW}]\left(  z\right)  \label{T1}%
\end{equation}There exists a unique solution of (\ref{T1}) which vanishes for $z
\geq 1$. Such a function is of order $\varepsilon$ in the interval
$\left( \frac{1}{2},1\right)  .$ However, there is a boundary layer
in the region $z\approx\frac{1}{2}$ for $\lambda$ close to
$\lambda_{LSW}$ where the function $\Phi$ experiences an abrupt
change. Adjusting the value of $\lambda$ in a suitable manner it is
possible to obtain $\Phi$ which is of order one for
$z<\frac{1}{2}.$ A careful analysis shows that $\lambda$ must be chosen as%
\begin{equation}\label{T1b}
\lambda_{LSW}-\lambda\sim\frac{3\pi^{2}}{\left(  2\right)
^{2/3}}\frac
{1}{\left(  \log\left(  \varepsilon\right)  \right)  ^{2}}%
\end{equation}
as $\varepsilon\rightarrow0$. This scaling law was already derived
in \cite{LS1}, and is in accordance with our results. Notice that
the smallness of $\Phi$ for $z\geq\frac{1}{2}$ implies that most of
the volume of the particles is in the region $z<\frac{1}{2}.$
\par%
In order to approximate $\Phi$ for $z\geq1$ we would need to use the
values of the function $\Phi$ for $z\in\cinterval{\tfrac{1}{2}}{1}$
obtained by means of (\ref{T1}). Therefore, $\varepsilon J[\Phi]$
becomes of order $\varepsilon^{2}$ for
$z\in\cinterval{1}{\tfrac{3}{2}}$ and the contribution of this
region can be expected to be negligible compared to that of the
interval $\cinterval{\tfrac{1}{2}}{1}$. A similar argument indicates
that the contributions to $\Phi$ due to the operator $\varepsilon
J\left( \Phi\right)  $ for $z>\frac{3}{2}$ can be ignored. This
procedure can be iterated to obtain in the limit a solution to
\eqref{S1E7} which decays exponentially fast at infinity. What
remains to be established is that such a procedure indeed leads to a
converging sequence of solutions. A rigorous proof could be based on
such a procedure; we proceed, however, in a slightly different
manner.
\bigpar
Before we continue we briefly comment on \eqref{T1b}, which give the
deviation of the mean-field from the value of the LSW model. This
quantity is of particular interest, since its inverse is a measure
for the coarsening rate, which is one of the key quantities in the
study of coarsening systems. Equation \eqref{T1b} predicts a much
larger deviation than the ones obtained from other corrections to
the LSW models. For example, one model which takes the effect of
fluctuations into account \cite{NV6} predicts a deviation of order
$O(\eps^{1/4})$. The large deviation predicted by \eqref{T1} can be
attributed to the fact that all particles contribute to the
coagulation term and suggests, that encounters are more relevant in
the self-similar regimes than fluctuations. We refer to \cite{Ni6}
for a more extensive discussion of these issues.
%
\subsubsection*{Derivation of a fixed point problem}
%
%
We now transform \eqref{S2E1} with the choice of kernel
(\ref{S1E3a}) into a fixed point problem. To this end we write our
equation as follows
\begin{equation}
-z\frac{\partial\Phi}{\partial z}-2\Phi+\frac{\partial}{\partial
z}\left( \left(  -1+\lambda z^{1/3}\right)  \Phi\right)
=\varepsilon\left[  \frac {z}{2}\int\limits_{0}^{v}\Phi\left(
z-z^{\prime}\right)  \Phi\left(  z^{\prime }\right)
dz^{\prime}-\Phi\left(  z,t\right)  -m_{0}z\Phi\left(  z\right)
\right]  \label{S4E1}%
\end{equation}
with
\begin{equation}
1=\int\limits_{0}^{\infty}z\Phi\at{z}\,d{z} ,
\quad\quad\quad%
m_{0}=\int\limits_{0}^{\infty}\Phi\at{z}\,dz.
\label{S4E2}%
\end{equation}
It would be natural to proceed as follows: For each given value of
$\varepsilon$ we select $m_{0}$ and $\lambda$ in order to satisfy
\eqref{S4E2}. However, it turns out to be more convenient to fix
$\lambda$ and then select $\eps$ and $m_{0}$ such that \eqref{S4E2}
is satisfied. The reason is that our argument requires to
differentiate the function $\psi$ defined below with respect to
either $\lambda$ or $\eps$, but it is easier to control the
derivatives with respect to $\eps$.
\bigpar
In the following we always consider $\lambda< \lambda_{LSW}$
and write
\begin{equation*}
\delta\deq\lambda_{LSW} -
\lambda>0,\quad\quad\quad\quad\tilde{\eps}\deq\eps{m_0}>0.
\end{equation*}
An important role in the fixed point argument is played by the functions
$z\mapsto\psi\at{z;\eps,\teps,\delta}$, which are defined as
solution to the following homogeneous problem
\begin{equation}
-\at{1+z-\at{\lambda_{LSW}-\delta}
z^{1/3}}\psi^{\prime}=\at{2-\tfrac{1}{3}\at{\lambda_{LSW}-\delta}
z^{-2/3}-\teps{z}-\eps)}\psi.
\label{S2E10}
\end{equation}
Each of these functions $\psi$ is uniquely determined up to a
constant to be fixed later. Notice that for $\delta>0$ the
function $\psi\at{z;\eps,\teps,\delta}$ is defined for all $z\geq0$.
 If $\delta=0$, however, the function $\psi\at{z;\eps,\teps,0}$ is defined only in the set
$z>\frac{1}{2},$ and it becomes singular as
$z\rightarrow\big(\frac{1}{2}\big)^{+}.$ Therefore the function
$\psi$ changes abruptly in a neighborhood of $z=\frac{1}{2}$ for
$\lambda$ close to $\lambda_{LSW}$. More precisely, if $\psi$ takes
values of order one for $z<\frac{1}{2}$,  then it is of order
$\exp\nat{-c/\sqrt\delta}$ for $z>\frac{1}{2}$, and this transition
layer causes most of the technical difficulties.
\bigpar%
We can now transform (\ref{S4E1}) into a fixed point problem for an
integral operator. Indeed, using Variation of Constants, and
assuming that $\Phi\left(  z\right)  $ decreases sufficiently fast
to provide the integrability required in the different formulas, we
obtain that each solution to \eqref{S4E1} satisfies
\begin{equation}
\label{S4E3}%
\Phi\at{z}=\eps\int\limits\limits_{z}^{\infty} \frac{\xi
}{\at{\xi+1-\at{\lambda_{LSW}-\delta}\xi^{1/3}}}%
\frac{\psi\at{z;\eps,\teps,\delta}}{\psi\at{\xi;\eps,\teps,\delta}}\at{\Phi*\Phi}\at\xi\,
d\xi=:I\left[ \Phi;\eps,\teps,\delta\right] \left( z\right),
\end{equation}
where the symmetric convolution operator $*$ is defined by
\begin{equation}
\label{S7E4a}
\at{\Phi_1 * \Phi_2}(z) =%
\tfrac{1}{2}\int\limits\limits_0^z \Phi_1(z-y)\Phi_2(y)\,dy= %
\tfrac{1}{2}\int\limits\limits_0^z\Phi_2(z-y)\Phi_1(y)\,dy.
\end{equation}
However, the values of the parameters $\pair{\eps}{\teps}$ cannot be
chosen arbitrarily but must be determined by the \emph{compatibility
conditions}
\begin{equation}
\eps\int\limits\limits_{0}^{\infty}zI\left[
\Phi;\eps,\teps,\delta\right]
dz\eeq\eps,\quad\quad\quad\eps\int\limits\limits_{0}^{\infty}I\left[
\Phi;\eps,\teps,\delta\right]  dz\eeq\teps\,.
\label{eq:Norm.Cond}%
\end{equation}
Notice that the operator $I\left[\Phi;\eps,\teps,\delta\right]$ maps
the cone of nonnegative functions $\Phi$ into itself, and this
implies that each solution to \eqref{S4E3} is nonnegative.
%
%
\subsubsection*{Main results and outline of the proofs}
%
%
We introduce the following function space $Z$ of exponentially
decaying functions. For arbitrary but fixed constants $\beta_1>0$
and $\beta_2>1$ let $\fspace\deq\{\Phi\;:\;\fnorm{\Phi}<\infty\}$
with
\begin{align}
\label{Def:Spaces}
\fnorm{\Phi}\deq\snorm{\Phi}+\tnorm{\Phi},\qquad
\snorm{\Phi}\deq\sup\limits_{0\leq{z}\leq1}\abs{\Phi\at{z}},
\qquad\tnorm{\Phi}\deq
\sup\limits_{z\geq1}\abs{\Phi\at{z}\exp\at{\beta_1{}z}z^{\beta_2}}.
\end{align}
Below in Section \ref{sect:AuxResults} we prove that $\Phi\in{Z}$
implies $\Phi*\Phi\in{Z}$. The particular choice of the parameters
$\beta_1$ and $\beta_2$ affects our smallness assumptions for the
parameter $\delta$: The larger $\beta_1$ and $\beta_2$ are the
smaller $\delta$ must be chosen, and the faster the solution will
decay. We come back to this issue at the end of the paper, cf.
Remark \ref{Rem.Prms.Beta}.
\bigpar
Our (local) existence and uniqueness results relies on the following smallness assumptions concerning $\delta$,
$\eps$, $\teps$ and $\Phi$.
\begin{assumption}
\label{Ass.Prms.1} Suppose that
\begin{enumerate}
\item $\delta$ is sufficiently small,
\item both $\eps$ and $\teps$ are of order $\nDo{\sqrt{\delta}}$,
\item $\Phi$ is sufficiently close to $\Phi_{LSW}$, in the sense
that $\fnorm{\Phi-\Phi_{LSW}}$ is small.
\end{enumerate}
\end{assumption}
Our first main result guarantees that we can choose the
parameters $\eps$ and $\teps$ appropriately.
\begin{theorem}
\label{MainTheo1}
Under Assumption \ref{Ass.Prms.1} we can solve \eqref{eq:Norm.Cond},
i.e., for each $\Phi$ there exists a unique choice of
$\pair{\eps}{\teps}$ such that the compatibility conditions are
satisfied. This solution belongs to
\begin{equation*} 
U_\delta=\left\{\pair{\eps}{\teps}\;:\;\tfrac{1}{2}\epsilon_\delta\leq\eps\leq2\epsilon_\delta,\;
\tfrac{1}{2}\widetilde{\epsilon}_\delta\leq\teps\leq2\widetilde{\epsilon}_\delta\right\},
\end{equation*}
where $\epsilon_\delta\sim\exp\nat{-c/\sqrt\delta}$ and
$\widetilde{\epsilon}_\delta\sim\nat{-c/\sqrt\delta}$ will be identified in
Equation \eqref{Eqn:Eps.Asymp} below.
\end{theorem}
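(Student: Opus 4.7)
The plan is to recast (\ref{eq:Norm.Cond}) as a two-dimensional fixed-point problem in the parameters $(\eps,\teps)$. Since the definition (\ref{S4E3}) of $I$ factors as $I[\Phi;\eps,\teps,\delta]=\eps\,K[\Phi;\eps,\teps,\delta]$ with
\begin{equation*}
K[\Phi;\eps,\teps,\delta](z)=\int_z^\infty\frac{\xi}{\xi+1-(\lambda_{LSW}-\delta)\xi^{1/3}}\,\frac{\psi(z;\eps,\teps,\delta)}{\psi(\xi;\eps,\teps,\delta)}\,(\Phi*\Phi)(\xi)\,d\xi,
\end{equation*}
the two compatibility conditions become, after dividing the first by $\eps$,
\begin{equation*}
\eps\,A(\eps,\teps)=1,\qquad \teps=\eps^2\,B(\eps,\teps),
\end{equation*}
where $A(\eps,\teps)\deq\int_0^\infty zK\,dz$ and $B(\eps,\teps)\deq\int_0^\infty K\,dz$. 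Equivalently, $(\eps,\teps)$ is sought as a fixed point on $U_\delta$ of the map $\calT(\eps,\teps)\deq\bat{1/A(\eps,\teps),\,\eps^2 B(\eps,\teps)}$.

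The central analytic input is the boundary-layer structure of $\psi$ at $z=\tfrac{1}{2}$. Because $1+z-\lambda_{LSW}z^{1/3}$ has a double zero there, the coefficient in (\ref{S2E10}) behaves like $\psi'/\psi\sim -N(z)/\bat{c_1(z-\tfrac{1}{2})^2+c_2\delta}$ near $z=\tfrac{1}{2}$ with $N(\tfrac{1}{2})>0$. Integrating across a layer of width $\sqrt\delta$ (via the inner variable $s=(z-\tfrac{1}{2})/\sqrt\delta$, which turns the integral into $\int_{-\infty}^{+\infty}ds/(c_1s^2+c_2)$ and produces the universal factor $\pi$ already visible in (\ref{T1b})) therefore causes a multiplicative jump of order $\exp(c/\sqrt\delta)$ in $\psi$. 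Consequently, for $z<\tfrac{1}{2}<\xi$ the ratio $\psi(z)/\psi(\xi)$ carries precisely this factor. Since $\Phi\in Z$ is close to $\Phi_{LSW}$, the convolution $\Phi*\Phi$ is essentially concentrated on $[0,1]$ and the dominant contribution to $A,B$ comes from $z<\tfrac{1}{2}$ paired with $\xi\in\cinterval{\tfrac{1}{2}}{1}$. Inner--outer matching then yields $A(\eps,\teps)=A_0(\eps,\teps)\exp(c/\sqrt\delta)$ and $B(\eps,\teps)=B_0(\eps,\teps)\exp(c/\sqrt\delta)$ with $A_0,B_0$ of order one, so $\eps A=1$ and $\teps=\eps^2 B$ force the leading values $\epsilon_\delta\sim A_0^{-1}\exp(-c/\sqrt\delta)$ and $\widetilde\epsilon_\delta\sim B_0 A_0^{-2}\exp(-c/\sqrt\delta)$; this is the identification to be recorded in (Eqn:Eps.Asymp).

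Existence and uniqueness in $U_\delta$ then follow from a contraction argument for $\calT$. The required Lipschitz estimates reduce to controlling the relative derivatives $\partial_\eps\psi/\psi$ and $\partial_\teps\psi/\psi$, which may be obtained by differentiating (\ref{S2E10}) and solving the resulting inhomogeneous linear ODEs by variation of constants. The reason for parameterizing by $\eps$ instead of $\lambda$, as remarked by the authors, is precisely that these relative derivatives stay bounded through the boundary layer (since differentiating in $\eps$ or $\teps$ only perturbs the numerator of the coefficient, not the delicate double zero of its denominator), whereas $\partial_\lambda\psi/\psi$ would not. Consequently $\partial_\eps A,\partial_\teps A,\partial_\eps B,\partial_\teps B$ inherit the same $\exp(c/\sqrt\delta)$ scaling as $A,B$, and one checks that $\calT$ maps $U_\delta$ into itself with Lipschitz constant strictly less than one; Banach's theorem then furnishes a unique fixed point in $U_\delta$.

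The principal obstacle is a rigorous, uniform version of the inner--outer matching for $\psi$: one must extract the exponent $c/\sqrt\delta$ together with its prefactor with enough precision to track sub-leading corrections, and one must show that these corrections remain controlled uniformly in $(\eps,\teps)\in U_\delta$ and in $\Phi$ within an $\fnorm\cdot$-neighbourhood of $\Phi_{LSW}$. Only this uniformity converts the formal asymptotics into genuine contractivity of $\calT$, which is what the theorem requires.
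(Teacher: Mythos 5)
Your reduction to a two–dimensional fixed-point problem and your identification of the boundary-layer structure of $\psi$ near $z=\tfrac{1}{2}$ as the analytic engine are both correct and match the paper in spirit: the paper also derives the exponential scaling $\epsilon_\delta,\widetilde{\epsilon}_\delta\sim\exp(-c/\sqrt\delta)$ from the transition in $\psi$, and it also exploits that differentiating $\psi$ in $\eps$ or $\teps$ only produces factors $\int a_\delta(y)(y+1)\,dy=O(1/\sqrt\delta)$, which when multiplied by $\eps,\teps=o(\sqrt\delta)$ become $o(1)$ -- this is exactly the reason for parameterizing by $\eps$ rather than $\lambda$.

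However, your final step -- ``one checks that $\calT$ maps $U_\delta$ into itself with Lipschitz constant strictly less than one'' -- contains a genuine gap. Write $\calT_1=1/A$ and $\calT_2=\eps^2 B$. Near the fixed point one indeed gets $\partial_\eps\calT_1,\ \partial_\teps\calT_1,\ \partial_\teps\calT_2 = \Do{1}$, but
\begin{align*}
\partial_\eps\calT_2=2\eps B+\eps^2\partial_\eps B\approx\frac{2\teps}{\eps}\bat{1+\Do{1}},
\end{align*}
and since $\teps_\app/\eps_\app=\widehat G_2/\widehat G_1=\cnstKT/\cnstKO\geq1$ by \eqref{Lem:Prop.GaH.2.Eqn3}, this entry is $\geq 2$. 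So the Jacobian of $\calT$ is of the form $\left(\begin{smallmatrix}\Do{1}&\Do{1}\\O(1)&\Do{1}\end{smallmatrix}\right)$, which is \emph{not} a contraction in the sup or Euclidean norm on $U_\delta$. Its spectral radius is $\Do{1}$, so the conclusion can be rescued, but only by introducing a suitably weighted norm (or an equivalent elimination), and you do not do this. The paper avoids the issue entirely by a sequential argument: it first uses the strict monotonicity $\partial_\eps g_1>0$ together with the endpoint sign conditions to obtain a unique $\eps=\eps(\teps)$ solving $g_1=\eps$, then shows $|\d\eps/\d\teps|=\Do{1}$, and finally proves that the reduced one-dimensional map $\tilde g_2(\teps)=g_2(\eps(\teps),\teps,\delta)$ is a contraction with the correct endpoint behaviour (Lemma \ref{Lem:FP.Prms.Sol}). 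You should either adopt this block-elimination structure or make the weighted-norm contraction explicit; in either case the precise estimates you need are not ``$\partial_\eps A,\dots$ inherit the same scaling as $A,B$'' (which is too weak, and also slightly off since $\partial_\eps A\sim A/\sqrt\delta$) but rather the relative bounds $\eps|\partial_\eps G_i|,\ \teps|\partial_\teps G_i|\leq\Do{1}\,\widehat G_i$ proved in Lemma \ref{Lemma:Props.G.Func}.
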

The solution from Theorem \ref{MainTheo1} depends naturally on the
function $h=h[\Phi]=\Phi*\Phi$, and is denoted by
$\pair{\eps[h;\delta]}{\teps[h;\delta]}$. In a second step we define
an operator $\bar{I}_\delta[\Phi]$ via
\begin{align}
\label{Def:MainFPOp}
\bar{I}_\delta[\Phi]:=
I\left[\Phi;\,\eps\big[h[\Phi];\delta\big],\,\teps\big[h[\Phi];\delta\big],\,\delta\right]%
\end{align}
with $I$ as in \eqref{S4E3}, and show that for sufficiently small
$\delta$ there exists a corresponding fixed point.
\begin{theorem}
\label{MainTheo2} %
Under Assumption \ref{Ass.Prms.1} there exists a nonnegative
solution to $\Phi=\bar{I}_\delta[\Phi]$ that is isolated in the
space $\fspace$.
\end{theorem}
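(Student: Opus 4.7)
The plan is to prove Theorem \ref{MainTheo2} by a Banach contraction argument applied to $\bar{I}_\delta$ on a suitable closed ball
\begin{equation*}
B_{\rho}\deq\bat{\Phi\in\fspace\;:\;\fnorm{\Phi-\Phi_{LSW}}\leq\rho}
\end{equation*}
with radius $\rho=\rho(\delta)$ chosen small but large enough to absorb the perturbations coming from the coagulation term. Since Theorem \ref{MainTheo1} already delivers, for each $\Phi\in B_{\rho}$, a unique pair $\pair{\eps[h[\Phi];\delta]}{\teps[h[\Phi];\delta]}$ with $\eps,\teps\sim\exp\nat{-c/\sqrt{\delta}}$, the operator $\bar{I}_\delta$ is well defined on $B_\rho$ and Assumption \ref{Ass.Prms.1} is automatically satisfied for $\delta$ small. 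Nonnegativity is essentially free: the integral operator $I\nat{\,\cdot\,;\eps,\teps,\delta}$ maps the nonnegative cone into itself (as already noted after \eqref{eq:Norm.Cond}), and iterating $\bar{I}_\delta$ from a nonnegative starting point therefore yields a nonnegative fixed point.

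The first substantive step is the self-mapping property $\bar{I}_\delta(B_\rho)\subseteq B_\rho$. I would first estimate $\fnorm{\bar{I}_\delta[\Phi_{LSW}]-\Phi_{LSW}}$ by exploiting the fact that $\Phi_{LSW}$ solves \eqref{S4E1} with $\eps=0$ on $\ccinterval{0}{1/2}$ and vanishes beyond; the difference picks up only the small residual driven by $\eps J[\Phi_{LSW}]$. Splitting the integral in \eqref{S4E3} into the inner region $z<\tfrac12$, the transition region $z\approx\tfrac12$, and the outer region $z>\tfrac12$, and using the asymptotics of $\psi\at{z;\eps,\teps,\delta}$ derived in the earlier sections (in particular the $\exp\nat{-c/\sqrt\delta}$ transition across $z=\tfrac12$), this quantity should be controlled by a power of $\eps$, which is exponentially small in $\delta^{-1/2}$. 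A companion Lipschitz bound, of the form $\fnorm{\bar{I}_\delta[\Phi]-\bar{I}_\delta[\Phi_{LSW}]}\leq L_{\delta}\fnorm{\Phi-\Phi_{LSW}}$, then closes the self-mapping provided $\rho$ is chosen, say, twice as large as the residual estimate.

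The second, and technically more demanding, step is the contraction estimate. Writing $\bar{I}_\delta[\Phi]=I[\Phi;\eps,\teps,\delta]$ with $\eps=\eps[h[\Phi];\delta]$ and $\teps=\teps[h[\Phi];\delta]$, I would decompose
\begin{equation*}
\bar{I}_\delta[\Phi_1]-\bar{I}_\delta[\Phi_2]
=\bat{I[\Phi_1;\eps_1,\teps_1,\delta]-I[\Phi_2;\eps_1,\teps_1,\delta]}
+\bat{I[\Phi_2;\eps_1,\teps_1,\delta]-I[\Phi_2;\eps_2,\teps_2,\delta]}
\end{equation*}
and control both pieces separately. The first piece is bilinear in $\Phi*\Phi$ and is bounded in $\fspace$ using the bilinear estimate $\fnorm{\Phi*\Phi}\leq C\fnorm{\Phi}^2$ announced in Section \ref{sect:AuxResults}, together with the sharp bounds on $\psi$ and its exponentially decaying tail. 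For the second piece I would use the Lipschitz dependence of $\pair{\eps}{\teps}$ on $h$, which is implicit in the proof of Theorem \ref{MainTheo1}, combined with $\fnorm{h[\Phi_1]-h[\Phi_2]}\leq C\at{\fnorm{\Phi_1}+\fnorm{\Phi_2}}\fnorm{\Phi_1-\Phi_2}$, and with the smoothness of $\psi\at{z;\eps,\teps,\delta}$ with respect to the parameters. The reason we fix $\lambda$ (equivalently $\delta$) and solve for $\eps,\teps$ rather than the reverse, stressed earlier, is precisely that the resulting derivatives are tame enough to yield a contraction factor $L_\delta<1$ for small $\delta$.

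The main obstacle is this last step, because the functions $\psi\at{\cdot;\eps,\teps,\delta}$ change by factors of order $\exp\nat{-c/\sqrt\delta}$ across the transition layer near $z=\tfrac12$; the ratios $\psi\at{z}/\psi\at{\xi}$ appearing in \eqref{S4E3} are therefore extremely sensitive to perturbations of the parameters when $z<\tfrac12<\xi$. Here one has to use the refined asymptotic expansions of $\psi$ already built in the preceding sections to show that the sensitivity is compensated by the compatibility conditions \eqref{eq:Norm.Cond}, which at leading order enforce the Lifshitz--Slyozov scaling law \eqref{T1b}. Once the self-mapping and contraction bounds are in place, the Banach fixed point theorem yields a unique fixed point of $\bar{I}_\delta$ in $B_\rho$; uniqueness in $B_\rho$ is precisely the isolation claim, since any other fixed point in $\fspace$ close enough to this one would again lie in $B_\rho$ by Assumption \ref{Ass.Prms.1}(iii). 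Nonnegativity, as noted, follows from running the Banach iteration starting at, say, $\Phi_{LSW}$.
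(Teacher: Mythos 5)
Your overall architecture (Banach fixed point on a small ball, with Theorem~\ref{MainTheo1} supplying the parameter pair and nonnegativity preserved under iteration) matches the paper's, but there are two substantive gaps. First, the claim that $\fnorm{\bar{I}_\delta[\Phi_{LSW}]-\Phi_{LSW}}$ is ``controlled by a power of $\eps$, which is exponentially small in $\delta^{-1/2}$'' is wrong. Since $I_\app[h;\delta]=\chi_{\ccinterval{0}{1}}\psi(\cdot;\eps[h;\delta],\teps[h;\delta],\delta)$, the dominant contribution to this difference on $\ccinterval{0}{1/2}$ is $\psi(\cdot;0,0,\delta)-\Phi_{LSW}$, which arises from the shift $\lambda=\lambda_{LSW}-\delta$ and is only $o(1)$ as $\delta\to0$, not exponentially small. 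The paper gets around this by \emph{recentering} the ball: the correct center is $\widehat{\Phi}_\delta:=\chi_{\ccinterval{0}{1}}\psi(\cdot;0,0,\delta)$, not $\Phi_{LSW}$, and the radius $\mu_\delta^2$ is chosen in \eqref{Def:PrmMu} to dominate both $\fnorm{\widehat{\Phi}_\delta-\Phi_{LSW}}$ and the genuinely exponentially small quantity $1/(\delta \cnstKO)$. With your ball $B_\rho$ centered at $\Phi_{LSW}$ and $\rho$ chosen by your (incorrect) exponential estimate, the self-mapping would fail.

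Second and more consequentially, you correctly identify that the ratios $\psi(z)/\psi(\xi)$ for $z<\tfrac12<\xi$ are extraordinarily sensitive to $\pair{\eps}{\teps}$, but you offer only the vague claim that ``the sensitivity is compensated by the compatibility conditions.'' The paper's concrete mechanism is the decomposition $J=J_\app+J_\res$ of \eqref{eqn:OpJ.Split}: the piece $J_\app$ is simply $\chi_{\ccinterval{0}{1}}\psi$ times a scalar fixed by the mass normalization, and Lemma~\ref{Lem:Prop.HomSol} shows that perturbing $\pair{\eps}{\teps}$ only changes $\psi$ by a multiplicative factor $\exp(O((\eps+\teps)/\sqrt\delta))$, which is $1+\Do{1}$; meanwhile the residual $J_\res$ is built from $J_2$ and $J_3$, which involve only same-side ratios ($0\le\xi\le z\le1$ or $1\le z\le\xi$) and never cross the transition layer. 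This algebraic observation is precisely what defuses the sensitivity you flag; without it (or an equivalent), a direct attack on the cross-layer ratios is essentially hopeless, and the ``refined asymptotic expansions'' you invoke would not by themselves produce the contraction constant.
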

In order to prove Theorem \ref{MainTheo1} we rewrite the
compatibility conditions as a fixed point equation for
$\pair{\eps}{\teps}$ with parameters $\delta$ and $h$. This reads
\begin{align}
\label{Def.FP.Prms}
g_{1}\left[h; \eps,\teps,\delta\right]\eeq\eps,\qquad %
g_{2}\left[h; \eps,\teps,\delta\right]\eeq\teps,
\end{align}
where
\begin{align}
\label{Def.FP.Prms.Def}
\begin{split}
g_{i}[h;\eps,{\teps},\delta]&\deq
\eps^2\int\limits_{0}^{\infty}%
\frac{\xi\,h\at\xi}{\at{\xi+1-\at{\lambda_{LSW}-\delta}\xi^{1/3}}}
\int\limits_{0}^{\xi}\ga_i\at{z}
\frac{\psi\at{z;\eps,{\teps},\delta}}{\psi\at{\xi;\eps,{\teps},\delta}}
dzd\xi,%
\end{split}
\end{align}
with $\ga_1\at{z}=z$ and $\ga_2\at{z}=1$.
\bigpar
For fixed $h$ the integrals $g_1$ and $g_2$ depend extremely
sensitive on $\eps$, $\teps$, and $\delta$. Therefore, the crucial
part in our analysis are the following asymptotic expressions for
$g_1$, $g_2$ and their derivatives that we derive within Section
\ref{sect:FP.Prms}.
\begin{prop}
\label{Intro.Prop1} %
Assumption \ref{Ass.Prms.1} implies
\begin{equation*}
\abs{g_{1}[h;\eps,\teps,\delta]- \epsilon_\delta}=
\Do{\epsilon_\delta}
,\quad\quad\quad%
\abs{g_{2}[h;\eps,\teps,\delta]- \widetilde{\epsilon}_\delta}=
\Do{\widetilde{\epsilon}_\delta}
\end{equation*}
as well as
\begin{equation*}
\abs{\eps\VarDer{g_i}{\eps}-2g_i}\leq\Do{g_i},\quad\quad\quad
\abs{\teps\VarDer{g_i}{\teps}}\leq\Do{g_i},
\end{equation*}
for $i=1,2$.\end{prop}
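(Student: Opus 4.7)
The plan is to analyze the parameter-sensitive integrals $g_1, g_2$ via a matched-asymptotic description of the homogeneous function $\psi$ across its transition layer at $z=\tfrac12$, use this description to extract the leading order of each $g_i$, and obtain the derivative identities from direct differentiation of \eqref{S2E10}. The decisive and technical step is the construction of $\psi$: one must control both $\psi$ itself and its $\eps,\teps$-derivatives across the layer with errors strong enough to survive the division $\psi\at{z}/\psi\at\xi$ and to justify differentiation under the integral sign. The remaining steps are then essentially bookkeeping.

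\textbf{Step 1: Asymptotics of $\psi$ across the layer.} Rewrite \eqref{S2E10} as $\at{\log\psi}'\at{z}=-\Lambda\at{z;\eps,\teps,\delta}$ with
\[
\Lambda\at{z;\eps,\teps,\delta}=\frac{2-\tfrac13\at{\lambda_{LSW}-\delta}z^{-2/3}-\teps{z}-\eps}{1+z-\at{\lambda_{LSW}-\delta}z^{1/3}}.
\]
At $\at{z,\delta,\eps,\teps}=\at{\tfrac12,0,0,0}$ the denominator has a double zero, and Taylor-expanding around $z=\tfrac12$ gives $1+z-\at{\lambda_{LSW}-\delta}z^{1/3}=\tfrac23\at{z-\tfrac12}^2+2^{-1/3}\delta+\DO{\abs{z-\tfrac12}^3}+\DO{\delta\abs{z-\tfrac12}}$, while the numerator equals $1+\DO\delta+\DO\eps+\DO\teps$ at $z=\tfrac12$. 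Introducing the inner variable $u=\at{z-\tfrac12}/\sqrt\delta$ and integrating the leading-order ODE through the inner region reduces the layer contribution to the model integral $\int_{-\infty}^\infty du/\at{\tfrac23u^2+2^{-1/3}}=\pi\sqrt{3}/2^{1/3}$, which fixes the layer constant $c$ (recovering the scaling \eqref{T1b}). Matching to outer WKB expansions on $\cointerval{0}{\tfrac12-\sqrt\delta}$ and $\ccinterval{\tfrac12+\sqrt\delta}{\infty}$, I obtain, uniformly for $\pair\eps\teps\in{}U_\delta$, that normalizing $\psi\at0=1$ yields $\psi=\Theta\at1$ on $\cointerval{0}{\tfrac12}$ and $\psi=\Theta\at{\epsilon_\delta}$ on $\ocinterval{\tfrac12}{\infty}$, with $\epsilon_\delta\deq\exp\nat{-c/\sqrt\delta}$. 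The key point is that $\eps,\teps=\nDo{\sqrt\delta}$ perturb $\log\psi$ across the layer by only $\Do1$, because the layer integral is of size $\DO{1/\sqrt\delta}$.

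\textbf{Step 2: Leading order of $g_i$.} Split the $\pair\xi{z}$-integration in \eqref{Def.FP.Prms.Def} at $\tfrac12$. The ratio $\psi\at{z}/\psi\at\xi$ is $\Theta\at1$ on $\{z,\xi<\tfrac12\}\cup\{\tfrac12<z,\xi\}$, is $\Theta\at{\epsilon_\delta}$ on $\{\xi<z\}$ (by monotone decay), and equals $\Theta\at{1/\epsilon_\delta}$ only on the admissible region $\{z<\tfrac12<\xi\}$. The first three regions contribute at most $\DO{\eps^2}=\Do{\epsilon_\delta}$, so the admissible region dominates. Using Assumption \ref{Ass.Prms.1}.(iii) to replace $h=\Phi*\Phi$ by $\Phi_{LSW}*\Phi_{LSW}$ up to a small error, the remaining integral is an explicit constant times $\eps^2/\epsilon_\delta$; fixing the proportionality constant \emph{defines} $\epsilon_\delta$ and analogously $\widetilde\epsilon_\delta$ (obtained with $\ga_2=1$ in place of $\ga_1=z$). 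This yields $g_1=\epsilon_\delta\at{1+\Do1}$ and $g_2=\widetilde\epsilon_\delta\at{1+\Do1}$. The weak dependence on $\pair\eps\teps\in{}U_\delta$ drops out by the same argument as in Step~1.

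\textbf{Step 3: Derivative identities.} Differentiating \eqref{S2E10} in $\eps$ and $\teps$ gives
\[
\VarDer{\log\psi\at{z}}{\eps}=\int_0^z\frac{d\zeta}{1+\zeta-\at{\lambda_{LSW}-\delta}\zeta^{1/3}}+\const,\qquad\VarDer{\log\psi\at{z}}{\teps}=\int_0^z\frac{\zeta\,d\zeta}{1+\zeta-\at{\lambda_{LSW}-\delta}\zeta^{1/3}}+\const,
\]
both of size $\DO{1/\sqrt\delta}$ for $z>\tfrac12$ (the denominator produces the same $\pi/\sqrt{ab}$-type contribution as in Step~1). Because $g_i$ carries the prefactor $\eps^2$,
\[
\eps\VarDer{g_i}{\eps}=2g_i+\eps^2\int\!\!\int\bat{\VarDer{\log\psi\at{z}}{\eps}-\VarDer{\log\psi\at\xi}{\eps}}\,\calK_i\at{\xi,z}\,dz\,d\xi,
\]
where $\calK_i$ denotes the integrand in \eqref{Def.FP.Prms.Def}. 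The remainder is bounded by $\eps\cdot\DO{1/\sqrt\delta}\cdot{}g_i=\Do{g_i}$ by Assumption \ref{Ass.Prms.1}.(ii). The analogous computation for $\teps\VarDer{g_i}{\teps}$ has no $2g_i$ term (there is no explicit $\teps$ prefactor in $g_i$), so $\abs{\teps\VarDer{g_i}{\teps}}=\Do{g_i}$, completing the four estimates claimed by the proposition.
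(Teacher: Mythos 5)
Your overall strategy matches the paper's at a high level: analyze $\psi$ across the transition layer at $z=\tfrac12$, identify the dominant contribution to the double integral as coming from $\{z<\tfrac12<\xi\}$, and obtain the derivative identities by differentiating the $\psi$-ratio inside the integral. The layer constant you compute, $\int_{-\infty}^{\infty}du/\bat{\tfrac23u^2+2^{-1/3}}=\pi\sqrt3/2^{1/3}$, is exactly the constant $\kappa$ in Lemma \ref{Lem:Prop.Prms.1}.(5), and the observation that $\eps,\teps=\nDo{\sqrt\delta}$ perturb $\log\psi$ only by $\Do1$ across the layer is precisely the content of Lemmas \ref{Lem:Prop.HomSol} and \ref{Lem:Props.Ga}. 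You prove essentially the same intermediate facts; the paper just organizes them through the auxiliary objects $\widehat\Gamma_i$, $\widehat G_i$, $K_{i,\delta}$, $R_\delta$ rather than through explicit matched asymptotics.

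However, there is a genuine gap in your treatment of the tail $\xi>1$. In Step 3 you assert that $\VarDer{\log\psi}{\eps}$ and $\VarDer{\log\psi}{\teps}$ are ``of size $\DO{1/\sqrt\delta}$ for $z>\tfrac12$'' and then bound the remainder in $\eps\VarDer{g_i}{\eps}-2g_i$ by pulling this factor out of the integral. That bound is not uniform: since $\VarDer{\log\psi\at{z}}{\eps}=\int_0^z a_\delta+\const$ grows like $\log z$ and $\VarDer{\log\psi\at{z}}{\teps}=\int_0^z y\,a_\delta+\const$ grows like $z$ as $z\to\infty$, the factor you wish to pull out is $\DO{\xi/\sqrt\delta}$ on the tail, not $\DO{1/\sqrt\delta}$. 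Closing this requires showing separately that the $\xi>1$ piece is negligible, which depends on the polynomial bound $\widehat\Gamma_i\at{\xi;\delta}\leq C\cnstKi\xi^4$ from Lemma \ref{Lem:Prop.GaH.2} combined with $\tnorm{h}=\Do1$ and the moment estimate \eqref{Lem:Props.H.1.Eqn2}; this is precisely the role of the decomposition $G_i=G_{i,1}+G_{i,2}$ in Lemma \ref{Lemma:Props.G.Func}. Your Step 3 never invokes the exponential decay of $h$ at all. Similarly, the claim in Step 2 that the secondary regions ``contribute at most $\DO{\eps^2}$'' and the claim in Step 1 that the normalized $\psi$ is $\Theta\at{\epsilon_\delta}$ on all of $\ointerval{1/2}{\infty}$ gloss over the fact that $\psi$ decays algebraically at infinity and that the $\xi>1$ contribution is controlled only by the $Z$-norm decay of $h$, not by the layer structure. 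These are fixable omissions, but as written the argument is incomplete precisely where the paper's auxiliary lemmas do the most work.

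Minor point: the exponent in the remainder formula of Step 3 is off by one power of $\eps$ (it should be $\eps^3$, not $\eps^2$, if $\calK_i$ denotes the integrand of \eqref{Def.FP.Prms.Def} without its $\eps^2$ prefactor), though your final bound $\eps\cdot\DO{1/\sqrt\delta}\cdot g_i$ is the right one.
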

Exploiting these estimates we prove Theorem \ref{MainTheo1} by
means of elementary analysis, see Section \ref{sect:FP.Prms}, and as
a consequence we derive in Section \ref{sect:FP.Prms} the following
result, which in turn implies Theorem \ref{sect:FP.Phi}.
\begin{prop}
\label{Intro.Prop2} %
Under Assumption \ref{Ass.Prms.1} there exists a small ball around
$\Phi_{LSW}$ in the space $Z$ such that the operator
$\bar{I}_\delta[\Phi]$ is a contraction on this ball.
\end{prop}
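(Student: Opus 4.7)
The plan is to apply Banach's fixed point theorem on a closed ball $B_r(\Phi_{LSW})\subset\fspace$ of small but fixed radius $r$. Two properties must be verified: self-mapping, $\bar{I}_\delta[\Phi]\in B_r(\Phi_{LSW})$ whenever $\Phi\in B_r(\Phi_{LSW})$, and strict contractivity, $\fnorm{\bar{I}_\delta[\Phi_1]-\bar{I}_\delta[\Phi_2]}\leq\kappa\fnorm{\Phi_1-\Phi_2}$ with $\kappa<1$ uniform on the ball. Uniqueness (and hence the isolation claim) is then part of the fixed point theorem applied on $B_r(\Phi_{LSW})$.

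For the self-mapping step I would first show that $\fnorm{\bar I_\delta[\Phi_{LSW}]-\Phi_{LSW}}\to 0$ as $\delta\to 0$. Inserting $h=\Phi_{LSW}*\Phi_{LSW}$ into \eqref{S4E3} together with the parameters $\pair{\eps[h;\delta]}{\teps[h;\delta]}$ supplied by Theorem \ref{MainTheo1}, the singular prefactor $\xi+1-(\lambda_{LSW}-\delta)\xi^{1/3}$ and the fundamental solution $\psi$ are designed precisely so that on $\cointerval{0}{1/2}$ the ratio $\psi(z;\cdot)/\psi(\xi;\cdot)$ reconstructs $\Phi_{LSW}(z)$ up to corrections that vanish with $\delta$, while on $\ocinterval{1/2}{\infty}$ the exponential smallness of $\psi$ across the boundary layer renders the integrand of order $\epsilon_\delta$. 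The full self-mapping property on $B_r(\Phi_{LSW})$ then follows by combining this baseline estimate with the contraction bound from the next step.

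For contractivity I would split, using $\pair{\eps_j}{\teps_j}\deq\pair{\eps[h[\Phi_j];\delta]}{\teps[h[\Phi_j];\delta]}$,
\[
\bar I_\delta[\Phi_1]-\bar I_\delta[\Phi_2]
= \bigl(I[\Phi_1;\eps_1,\teps_1,\delta]-I[\Phi_2;\eps_1,\teps_1,\delta]\bigr)
+ \bigl(I[\Phi_2;\eps_1,\teps_1,\delta]-I[\Phi_2;\eps_2,\teps_2,\delta]\bigr).
\]
The first bracket is linear in $\Phi_1*\Phi_1-\Phi_2*\Phi_2$; the algebra property of $\fspace$ under $*$ announced in Section \ref{sect:AuxResults} bounds it by $C\,\epsilon_\delta\,r\,\fnorm{\Phi_1-\Phi_2}$, which is small because the prefactor is already exponentially small in $1/\sqrt\delta$. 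The second bracket is handled by the mean value theorem in $\pair{\eps}{\teps}$: the same asymptotic analysis that underlies Proposition \ref{Intro.Prop1} provides analogous bounds for $\VarDer{I}{\eps}$ and $\VarDer{I}{\teps}$, so the task reduces to showing that $\pair{\eps[h;\delta]}{\teps[h;\delta]}$ is Lipschitz in $h$ with a small relative constant. This is a by\-product of the implicit function theorem applied to \eqref{Def.FP.Prms}: after rescaling rows by $\eps^{-1},\teps^{-1}$, Proposition \ref{Intro.Prop1} identifies the Jacobian of $\pair{\eps}{\teps}\mapsto\pair{g_1-\eps}{g_2-\teps}$ as the identity plus a perturbation of order $\nDo{1}$, hence invertible with explicit bounds, and the linear dependence of $g_i$ on $h$ transfers to a Lipschitz dependence of the parameters on $h$.

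The main obstacle is the extreme sensitivity of $\psi$ on $\pair{\eps}{\teps}$ across the transition layer at $z=\tfrac{1}{2}$, where $\psi$ jumps by a factor $\exp\nat{-c/\sqrt\delta}$. The contraction estimate only closes because Proposition \ref{Intro.Prop1} asserts $\eps\VarDer{g_i}{\eps}=2g_i+\nDo{g_i}$ and $\teps\VarDer{g_i}{\teps}=\nDo{g_i}$, i.e.\ the sensitivity is controlled \emph{relative} to the size of $g_i$ rather than in absolute terms. Propagating this relative control through both the implicit function theorem for $\pair{\eps[h]}{\teps[h]}$ and the mean value step for $I$ is what keeps the overall Lipschitz constant $\nDo{1}$ as $\delta\to 0$, so that $\kappa<1$ can be arranged for $\delta$ sufficiently small.
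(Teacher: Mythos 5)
Your overall strategy (Banach fixed point on a small ball, splitting the difference $\bar I_\delta[\Phi_1]-\bar I_\delta[\Phi_2]$ into a ``fixed parameters'' piece and a ``varying parameters'' piece, and using the Lipschitz dependence of $\pair{\eps[h]}{\teps[h]}$ on $h$) is in the right spirit, and you have correctly identified several of the ingredients (Lemma~\ref{Lem:FP.Prms.Cont}, the algebra property from Lemma~\ref{Lem:Props.H.1}). However, there is a genuine gap in the treatment of the first bracket. You claim that $I[\Phi_1;\eps_1,\teps_1,\delta]-I[\Phi_2;\eps_1,\teps_1,\delta]$ is bounded by $C\,\epsilon_\delta\,r\,\fnorm{\Phi_1-\Phi_2}$ because the prefactor $\eps\sim\epsilon_\delta$ is exponentially small. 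This is false: the linear map $h\mapsto I[h;\eps_1,\teps_1,\delta]$ at \emph{fixed} parameters has operator norm of order one, not of order $\epsilon_\delta$. The reason is that in \eqref{S4E3} the integrand contains $1/\psi(\xi;\cdot)$, which for $\xi>\tfrac12$ grows like $\exp\nat{c/\sqrt\delta}$, and this exponential growth is precisely what the compatibility condition \eqref{eq:Norm.Cond} balances against $\eps\sim\exp\nat{-c/\sqrt\delta}$. Concretely, $\eps_1 G_1[h;\eps_1,\teps_1,\delta]\sim R_0[h]/R_0[h_1]$ is $O(1)$, not $O(\epsilon_\delta)$, so the first bracket contributes $\chi_{[0,1]}\psi(\cdot;\eps_1,\teps_1,\delta)\cdot O(1)\,\fnorm{h_1-h_2}$. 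Your second bracket is likewise $O(1)\fnorm{h_1-h_2}$ when estimated by itself; the contraction closes only because the $O(1)$ parts of the two brackets \emph{cancel}, and your decomposition obscures exactly that cancellation.

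The paper makes the cancellation manifest by a different and essential decomposition. It splits $I=I_\app+I_\res$ where $I_\app[h;\delta]=\chi_{[0,1]}\psi(\cdot;\eps[h;\delta],\teps[h;\delta],\delta)$, using the compatibility identity $\eps[h;\delta]\int_0^\infty\xi J[h;\cdot]\,d\xi=1$ to collapse the leading ($J_1$) contribution into pure normalization: $I_\app$ then depends on $h$ \emph{only through} $\eps[h;\delta]$ and $\teps[h;\delta]$, whose Lipschitz constant in $h$ is of order $\eps\sim\epsilon_\delta$ by Lemma~\ref{Lem:FP.Prms.Cont}, and the sensitivity of $\psi$ to those parameters (Lemma~\ref{Lem:Prop.HomSol}) costs only a factor $\exp\nat{C(\abs{\Delta\eps}+\abs{\Delta\teps})/\sqrt\delta}-1=\Do{1}\fnorm{h_1-h_2}$. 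The residual $I_\res=\eps J_\res$ carries the explicit small prefactor $\eps/\delta=\Do{\mu_\delta^2}$ via Corollary~\ref{Cor:PropsJres}. Also note the paper centers the invariant ball at $\widehat{\Phi}_\delta=\chi_{[0,1]}\psi(\cdot;0,0,\delta)$ rather than $\Phi_{LSW}$, with radius $\mu_\delta^2=\Do{1}$, which is the natural center for $I_\app$; this is consistent with the statement since $\fnorm{\widehat\Phi_\delta-\Phi_{LSW}}\to 0$. To repair your argument you would need to replace your split by the $I_\app/I_\res$ decomposition (or equivalently, subtract and add $\chi_{[0,1]}\psi(\cdot;\eps_i,\teps_i,\delta)$ so that the $O(1)$ terms cancel explicitly), rather than estimating each bracket separately.
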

We proceed with some comments concerning the uniqueness of
solutions. Proposition \ref{Intro.Prop2} provides a local uniqueness
result in the function space $\fspace$. Moreover, since we can
choose the decay parameters $\beta_1>0$ and $\beta_2>1$ arbitrarily,
compare the discussion in Remark \ref{Rem.Prms.Beta}, we finally
obtain local uniqueness in the space of all exponential decaying
solutions. However, this does exclude neither the existence of
further exponentially decaying solutions being sufficiently far away
from $\Phi_{LSW}$, nor the existence of algebraically decaying
solutions.
%
%
%
\section{Proofs } \label{sec:MT.g}
In what follows $c$ and $C$ are small and large, respectively,
positive constants which are independent of $\delta$. Moreover,
$\Do{1}$ denotes a number that converges to $0$ as $\delta\to0$,
where this convergence is always uniform with respect to all other
quantities under consideration.
\bigpar%
For the subsequent considerations the following notations are
useful. For $\delta>0$ let
\begin{align}
\label{Def.Elem.Funct}%
\begin{split}
a_\delta\at{z}
\deq\frac{1}{1+z-\lambda_{LSW}{z}^{1/3}+\delta{z}^{1/3}},\quad\quad\quad
b_\delta\at{z}\deq2-\tfrac{1}{3}\lambda_{LSW}{z}^{-2/3}+\tfrac{1}{3}\delta{z}^{-2/3},
\end{split}
\end{align}
compare Figure \ref{Fig1}, so that by definition the function
$z\mapsto\psi\at{z;\eps,\teps,\delta}$ solves the homogenous
equation
\begin{figure}[ht!]%
\centering{%
\includegraphics[width=0.35\textwidth,draft=\mhdraft]{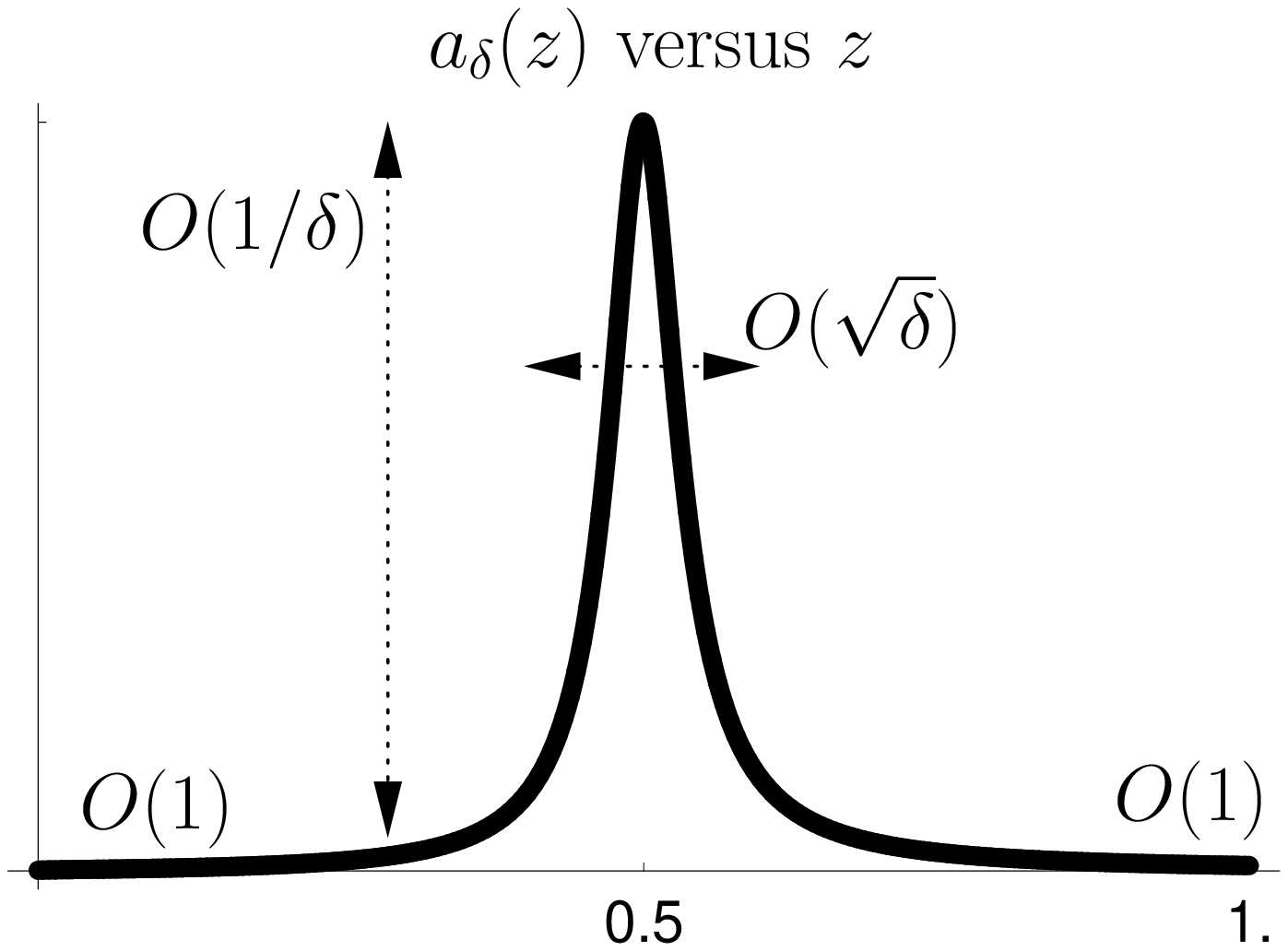}%
\hspace{0.04\textwidth}%
\includegraphics[width=0.35\textwidth,draft=\mhdraft]{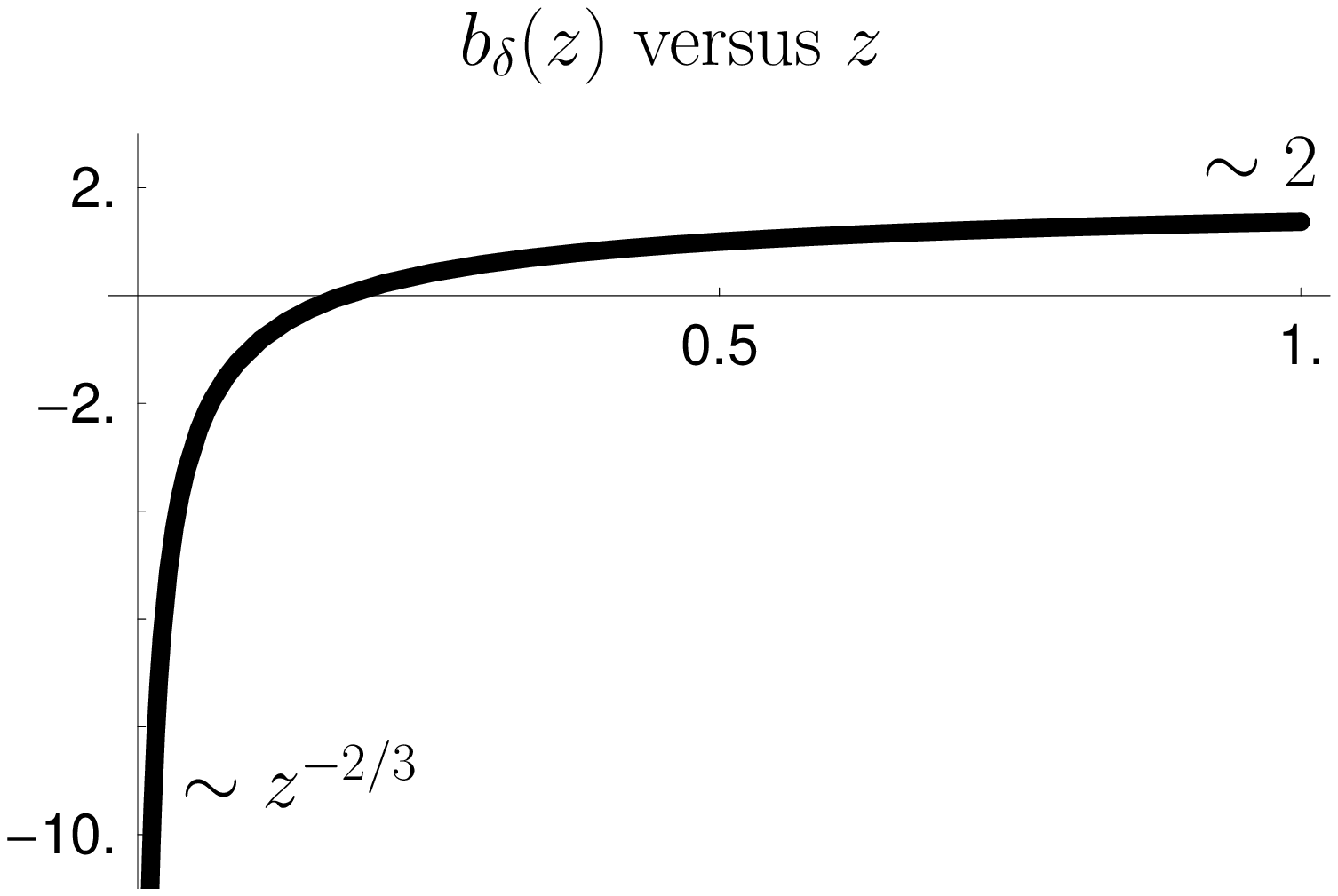}%
}%
\caption{%
Sketch of the functions $a_\delta$ and $b_\delta$ for small $\delta$
and $z\in\ccinterval{0}{1}$.
}%
\label{Fig1}%
\end{figure}%
\begin{align}
\label{eq:Def.HomSol}
\psi^{\prime}=-{a_\delta}\at{z}\,\at{b_\delta\at{z}-\teps{z}-\eps}\,\psi,\quad0\leq{z}<\infty,
\end{align}
compare \eqref{S2E10}.
For convenience we normalize $\psi$ by
\begin{align}
\label{eq:Mass.Norm}
\int\limits_{0}^{1}\,z\,\psi\at{z;\eps,\teps,\delta}\,dz=1=\int\limits_{0}^{1}\,z\,\Phi_{LSW}\at{z}\,dz=1,
\end{align}
and this implies
\begin{equation*}
\abs{\widehat{\psi}_\delta\at{z}-\Phi_{LSW}\at{z}}=\Do{1},\quad0\leq{z}\leq{1},
\end{equation*}
with $\widehat{\psi}_\delta\at{z}=\psi\at{z;0,0,\delta}$, and
\begin{equation*}
\frac{1}{\abs{\widehat{\psi}_\delta\at{z}}}\left\{
\begin{array}{lcl}
\leq{C}_\si& \text{for}&0\leq{z}\leq\tfrac{1}{2}-\si,
\\%
\geq{}c_\si\exp\at{c/\sqrt\delta}
&\text{for}&\tfrac{1}{2}+\si\leq{z}\leq{1},
\end{array}
\right.
\end{equation*}
with $0<\si<\tfrac{1}{2}$ arbitrary, compare Figure \ref{Fig2}.
\begin{figure}[ht!]%
\centering{%
\includegraphics[width=0.35\textwidth,draft=\mhdraft]{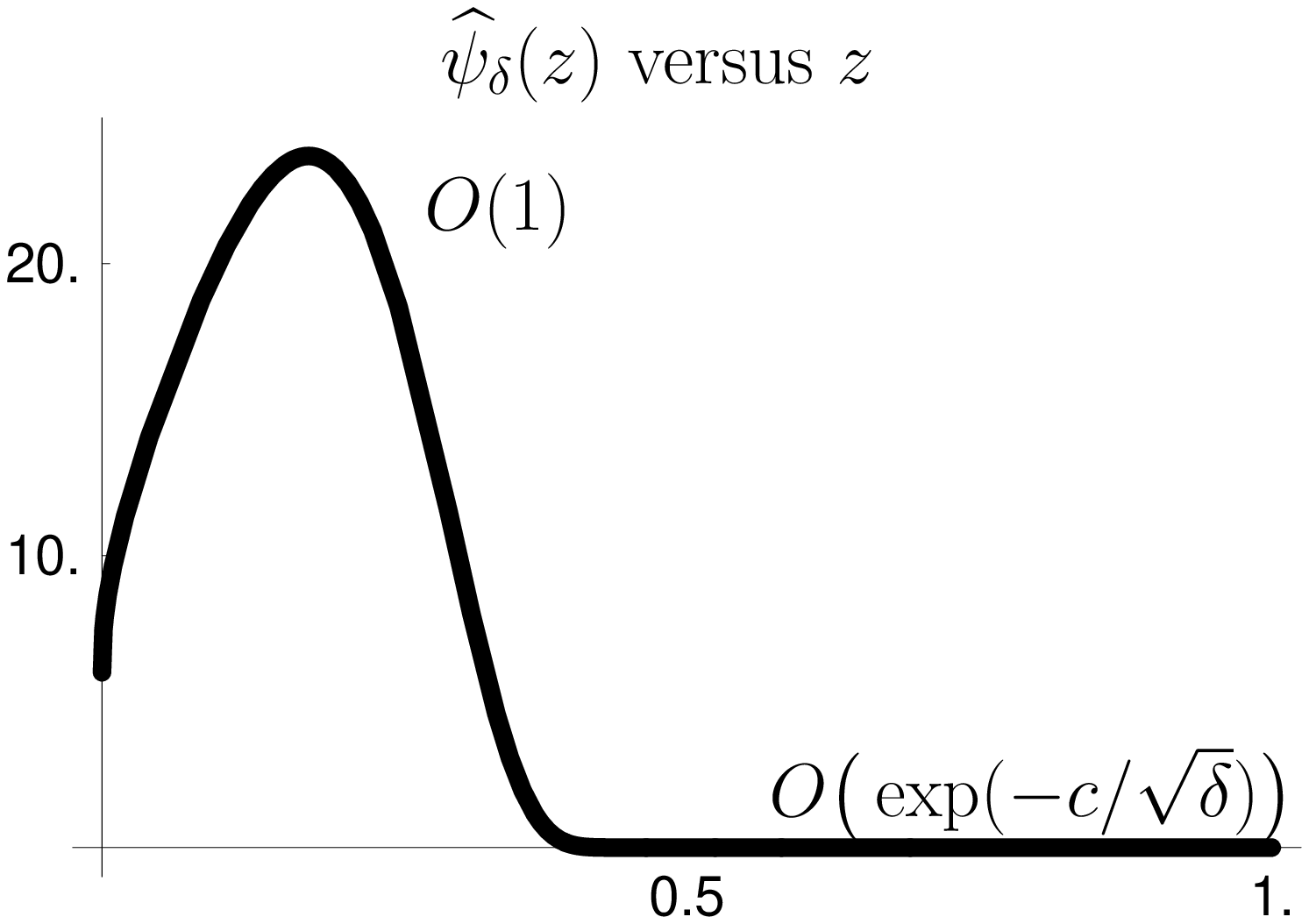}%
\hspace{0.04\textwidth}%
\includegraphics[width=0.35\textwidth,draft=\mhdraft]{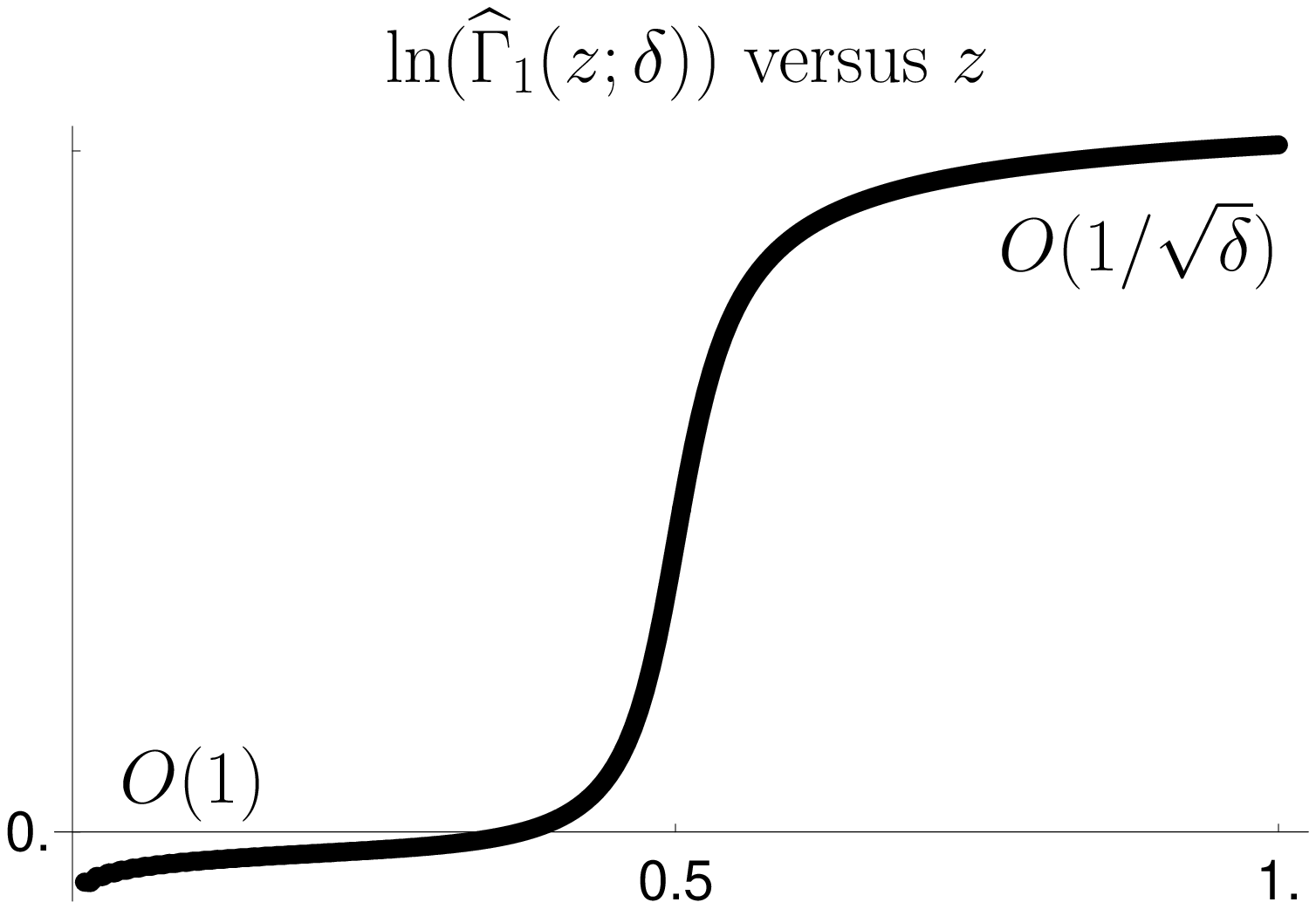}%
}%
\caption{%
Sketch of the functions and $\widehat{\psi}_\delta$ and
$\ln\nat{\widehat\Gamma_1\at{\cdot;\delta}}$ for small $\delta$ and
$z\in\ccinterval{0}{1}$.
}%
\label{Fig2}%
\end{figure}%
\bigpar%
Moreover, we define
\begin{align}
\label{Def.Ga}
\begin{split}
G_{i}[h;\eps,{\teps},\delta]&  \deq\int\limits_{0}^{\infty}%
\xi\,a_\delta\at{\xi}\,h\at\xi\,\Gamma_i\at{\xi;\eps,{\teps},\delta}d\xi,
\\%
\Gamma_i\at{\xi;\eps,{\teps},\delta}&  \deq%
\int\limits_{0}^{\xi}\ga_i\at{z}
\frac{\psi\at{z;\eps,{\teps},\delta}}{\psi\at{\xi;\eps,{\teps},\delta}}
dz
 =\int\limits_{0}^{\xi}\ga_i\at{z}
\exp\at{\int\limits_{z}^{\xi}a_\delta\at{y}\Bat{b_\delta\at{y}-{\teps}y-\eps}
\,dy}  dz,
\end{split}
\end{align}
with $\gamma_1\at{z}=z$, $\gamma_2\at{z}=1$, so that the
compatibility conditions \eqref{Def.FP.Prms.Def} read
\begin{align*}
\eps\eeq\eps^2G_1[h;\eps,\teps,\delta],\quad\quad\quad
\teps\eeq\eps^2G_2[h;\eps,{\teps},\delta].
\end{align*}
In order to prove the existence of solutions to these equations we
need careful estimates on the functionals $G_i$ and their
derivatives with respect to $\eps$ and $\teps$. These are derived
within \S\ref{sect:FP.Prms} by exploiting Assumption
\ref{Ass.Prms.1}, and rely on the following two observations. For
for small $\eps$ and $\teps$ we can ignore that the functions
$\Gamma_1$ and $\Gamma_2$ depend on these parameters, and for $h$
close to $\Phi_{LSW}*\Phi_{LSW}$ we can neglect the contributions of
the exponential tails to $G_1$ and $G_2$. More precisely, our main
approximation arguments are
\begin{align*}
\Gamma_i\at{\xi;\eps,\teps,\delta}\approx\widehat{\Gamma}_i\at{\xi;\delta},
\quad\quad\quad%
G_{i}[h;\eps,{\teps},\delta]\approx\widehat{G}_{i}[h;\delta]\approx\widehat{G}_{i}[\Phi_{LSW}*\Phi_{LSW};\delta],
\end{align*}
where
\begin{align}
\label{Def.Ga.2}
\widehat{\Gamma}_i\at{\xi;\delta}\deq\Gamma_i\at{\xi;0,0,\delta}
,\quad\quad\quad%
\widehat{G}_{i}[h;\delta]\deq\int\limits_{0}^{1}%
\xi\,a_\delta\at{\xi}\,h\at\xi\,\widehat{\Gamma}_i\at{\xi;\delta}d\xi.
\end{align}
Moreover, for small $\delta$ we can further
approximate $\widehat{G}_{i}$ by
\begin{align*}
\widehat{G}_{i}[h;\delta]\approx\cnstKi\,R_0[h],
\end{align*}
where
\begin{align*}
\cnstKi\deq\widehat{\Gamma}_i\at{1;\delta}
\end{align*}
and $R_0$ is the well defined limit for $\delta\to{0}$ of the
functionals
\begin{align}
\label{Def.Rho} %
{R}_\delta[h]\deq\int\limits_0^1\,\varrho_\delta\at{\xi}\,h\at\xi\,d\xi,\quad\quad\quad%
\varrho_\delta\at{\xi}
\deq\xi\,a_\delta\at{\xi}\exp\at{-\int\limits_\xi^1\,a_\delta\at{y}\,b_\delta\at{y}\,d{y}},
\end{align}
see Corollary \ref{Cor:Props.Rho} below.
%
%
%
%
\subsection{Auxiliary results}
\label{sect:AuxResults}
%
\subsubsection{Properties of the function spaces}
%
Here we prove that the convolution operator $*$ from \eqref{S7E4a}
is continuous and maps the space ${Z}$ into itself, and we derive
further useful estimates.
\begin{lemma}
\label{Lem:Props.H.1} %
For arbitrary $\Phi_1,\,\Phi_2\in\fspace$ we have $\Phi_1*\Phi_2\in\fspace$, and there exists
a constant $C$ such that
\begin{align*}
\fnorm{\Phi_2*\Phi_2-\Phi_1*\Phi_1}\leq
C\at{\fnorm{\Phi_2}+\fnorm{\Phi_1}}\fnorm{\Phi_2-\Phi_1}.
\end{align*}
Moreover, for each $n\in\Nset$ there exists a constant $C_n$ such that
\begin{align}
\label{Lem:Props.H.1.Eqn3} %
\frac{1}{z^n}\int\limits_{z}^{\infty}%
\xi^n\abs{\Phi\at{\xi}}\,d\xi
+\frac{1}{z^n}\int\limits_{z}^{\infty}%
\xi^n\at{\xi-z}\abs{\Phi\at{\xi}}\,d\xi
&\leq{}%
C_n\frac{\exp\at{-\beta_1{z}}}{z^{\beta_2}}\tnorm{\Phi}
\end{align}
for $z\geq{1}$ , and hence
\begin{align}
\label{Lem:Props.H.1.Eqn2} %
\int\limits_{1}^{\infty}%
\xi^n\abs{\Phi\at{\xi}}\,d\xi\leq{}C_n\tnorm{\Phi}
\end{align}
for all $\Phi\in\fspace$.
\end{lemma}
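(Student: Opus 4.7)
I split the norm as $\fnorm{\cdot}=\snorm{\cdot}+\tnorm{\cdot}$ and bound each part of $\Phi_1*\Phi_2$ separately. Bilinearity and the symmetry of $*$ give
\begin{equation*}
\Phi_2*\Phi_2-\Phi_1*\Phi_1=\Phi_2*\at{\Phi_2-\Phi_1}+\at{\Phi_2-\Phi_1}*\Phi_1,
\end{equation*}
so the difference estimate reduces to proving the bilinear bound $\fnorm{\Phi*\Psi}\leq C\fnorm{\Phi}\fnorm{\Psi}$ for arbitrary $\Phi,\Psi\in\fspace$.

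\textbf{Convolution estimate.} For $z\in\cinterval{0}{1}$ both arguments $z-y$ and $y$ of the integrand lie in $\cinterval{0}{1}$, so each factor is controlled by $\snorm{\cdot}$; since the interval of integration has length $z\leq1$, I immediately obtain $\snorm{\Phi*\Psi}\leq\tfrac{1}{2}\snorm{\Phi}\snorm{\Psi}$. For $z\geq2$ I decompose $\int_0^z=\int_0^1+\int_1^{z-1}+\int_{z-1}^z$. In the first and third pieces one factor is in the $\snorm{\cdot}$-regime and the other in the $\tnorm{\cdot}$-regime; the surviving integrand is controlled by $e^{-\beta_1(z-1)}/(z-1)^{\beta_2}\leq Ce^{-\beta_1 z}/z^{\beta_2}$. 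In the middle piece both factors are tails, and the key computation
\begin{equation*}
\int_1^{z-1}\frac{dy}{(z-y)^{\beta_2}y^{\beta_2}}\leq\frac{C}{z^{\beta_2}}
\end{equation*}
is obtained by splitting at $y=z/2$, using $(z-y)\geq z/2$ on $\cinterval{1}{z/2}$ (and symmetrically on the other half), and invoking $\beta_2>1$ to integrate $1/y^{\beta_2}$ over $\cointerval{1}{\infty}$. For $z\in\cinterval{1}{2}$ the weight $e^{\beta_1 z}z^{\beta_2}$ is comparable to a constant and a crude direct estimate suffices. Combining the three pieces yields $\tnorm{\Phi*\Psi}\leq C\fnorm{\Phi}\fnorm{\Psi}$.

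\textbf{Tail moment integrals.} For \eqref{Lem:Props.H.1.Eqn3} I insert $|\Phi(\xi)|\leq\tnorm{\Phi}e^{-\beta_1\xi}/\xi^{\beta_2}$ and substitute $\xi=z+s$, which gives
\begin{equation*}
\frac{1}{z^n}\int_z^\infty\xi^n|\Phi(\xi)|\,d\xi\leq\frac{\tnorm{\Phi}e^{-\beta_1 z}}{z^n}\int_0^\infty(z+s)^{n-\beta_2}e^{-\beta_1 s}\,ds.
\end{equation*}
The cases $n\geq\beta_2$ (via $(z+s)^{n-\beta_2}\leq C(z^{n-\beta_2}+s^{n-\beta_2})$) and $n<\beta_2$ (via monotonicity $(z+s)^{n-\beta_2}\leq z^{n-\beta_2}$) both yield the $s$-integral $\leq Cz^{n-\beta_2}$ for $z\geq1$, which produces the claimed bound. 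The second integral in \eqref{Lem:Props.H.1.Eqn3} is treated identically with the extra factor $s=\xi-z$ absorbed into the convergent $s$-integral, and \eqref{Lem:Props.H.1.Eqn2} follows from \eqref{Lem:Props.H.1.Eqn3} by evaluating at $z=1$.

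\textbf{Main obstacle.} The delicate point is preserving the polynomial tail rate $\beta_2$ under convolution: the exponential factor tensorises trivially, but the polynomial weight with $\beta_2>1$ produces the non-integrable singularity $1/((z-y)^{\beta_2}y^{\beta_2})$ at both endpoints of $\oointerval{0}{z}$. This is precisely why the buffer intervals $\cinterval{0}{1}$ and $\cinterval{z-1}{z}$ must be handled through $\snorm{\cdot}$ rather than $\tnorm{\cdot}$, and why the split at $y=z/2$ in the middle integral is essential.
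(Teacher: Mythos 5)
Your proof is correct and follows essentially the same route as the paper's: small $z$ via $\snorm{\cdot}$, large $z$ via the three-piece decomposition with the critical estimate $\int_1^{z-1}(z-y)^{-\beta_2}y^{-\beta_2}\,dy\leq Cz^{-\beta_2}$, and the bilinear identity for the difference. You supply slightly more detail than the paper on two points it leaves to the reader (the split at $y=z/2$ using $\beta_2>1$, and the explicit substitution $\xi=z+s$ for the moment integrals), but there is no genuine difference in approach.
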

\begin{proof}
Let $\Phi_i\in{Z}$ be arbitrary. Definition \eqref{Def:Spaces}
provides
\begin{align*}
\babs{\at{\Phi_2*\Phi_1}\at{z}}
\leq{C}\fnorm{\Phi_1}\fnorm{\Phi_2},\quad\quad0\leq{z}\leq{2},
\end{align*}
where we used that
\begin{math}
\sup_{0\leq{z}\leq2}\abs{\Phi\at{z}}\leq{C}\fnorm{\Phi}.
\end{math}
For $z\geq{2}$ we estimate
\begin{align*}
\int\limits_{z-1}^{z}\frac{\exp\at{-\beta_1{y}}}{y^{\beta_2}}\,dy=
\int\limits_{0}^{1}\frac{\exp\at{-\beta_1\at{z-y}}}{\at{z-y}^{\beta_2}}\,dy
\leq%
\frac{\exp\at{-\beta_1{z}}}{\at{z-1}^{\beta_2}}
\frac{\exp\at{\beta_1}-1}{\beta_1}\leq{}%
C\frac{\exp\at{-\beta_1{z}}}{z^{\beta_2}}
\end{align*}
and
\begin{align*}
\int\limits_{1}^{z-1}\frac{\exp\at{-\beta_1\at{z-y}}}{\at{z-y}^{\beta_2}}
\frac{\exp\at{-\beta_1{y}}}{y^{\beta_2}}\,dy
\leq%
\exp\at{-\beta_1{z}}
\int\limits_{1}^{z-1}\frac{dy}{\at{z-y}^{\beta_2}\,y^{\beta_2}}
\leq{}%
C\frac{\exp\at{-\beta_1{z}}}{z^{\beta_2}}.
\end{align*}
These results imply $\Phi_2*\Phi_1\in{Z}$ with
$\fnorm{\Phi_2*\Phi_1}\leq{C}\fnorm{\Phi_1}\fnorm{\Phi_2}$, and
we conclude
\begin{align*}
\fnorm{\Phi_2*\Phi_2-\Phi_1*\Phi_1]}
&\leq%
\fnorm{\Phi_2*\at{\Phi_2-\Phi_1}}+\fnorm{\Phi_1*\at{\Phi_2-\Phi_1}}
\leq%
{C}\at{\fnorm{\Phi_2}+{\fnorm{\Phi_1}}}
\at{\fnorm{\Phi_2-\Phi_1}}.
\end{align*}
Finally, the estimates \eqref{Lem:Props.H.1.Eqn3} and \eqref{Lem:Props.H.1.Eqn2} follow from $\Phi\at{z}\leq\tnorm{\Phi}z^{-\beta_2}\exp\at{-\beta_1{z}}$ and elementary estimates for integrals.
\end{proof}
\begin{remark}
\label{Rem:Props.H.2} All constants $C$ in Lemma \ref{Lem:Props.H.1}
depend on the parameters $\beta_1$ and $\beta_2$ that appear in the
definition of the function space $\fspace$, compare
\eqref{Def:Spaces}. More precisely, we have $C\to\infty$ as
$\beta_1\to\infty$ or $\beta_2\to\infty$.
\end{remark}
%
%
\subsubsection{Properties of $a_\delta$ and $b_\delta$}
%
All subsequent estimates rely on the following properties of the
functions $a_\delta$ and $b_\delta$ which are illustrated in Figure
\ref{Fig1}.%
\begin{lemma}
\label{Lem:Prop.Prms.1}%
Let $\delta\leq{1}$ and $\si$ be arbitrary with
$0<\si<\tfrac{1}{2}$. Then,
\begin{enumerate}
\item
\label{Lem:Prop.Prms.1.Item1}%
$a_\delta$ is uniformly positive on $\ccinterval{0}{1}$, where
\begin{align*}
\max{}a_\delta\at{y}=\frac{2^{1/3}}{\delta}\at{1+\DO{\delta}}\quad\text{and}\quad
\mathrm{argmax}\,a_\delta\at{y}=\frac{1}{2}+\DO{\delta}
\end{align*}
denote the maximum and the maximizer, respectively,
\item
\label{Lem:Prop.Prms.1.Item2}%
$b_\delta$ is uniformly integrable on $\ccinterval{0}{1}$ and
nonnegative for $z\geq\at{\tfrac{1}{2}}^{5/2}$,
\item
\label{Lem:Prop.Prms.1.Item3}%
$z\,a_\delta\at{z}\to1$ and $b_\delta\at{z}\to2$ as $z\to\infty$,
\item
\label{Lem:Prop.Prms.1.Item4}%
$a_\delta$ can be
expanded with respect to $\delta$ on
$\ccinterval{0}{\tfrac{1}{2}-\si}\cup\ccinterval{\tfrac{1}{2}+\si}{1}$,
i.e.,
\begin{align*}
\abs{a_\delta\at{z}-a_0\at{z}}\leq{C_\si}\delta
\end{align*}
for $z\leq1$ with $\abs{z-\tfrac{1}{2}}>\si$ and some constant
$C_\si$ depending on $\si$,
\item
\label{Lem:Prop.Prms.1.Item5}%
\begin{math}
\displaystyle%
\int\limits_0^1a_\delta\at{y}dy=\frac{\kappa}{\sqrt\delta}\at{1+\Do{1}}
\end{math}
for some constant $\kappa$ given in the proof.
\end{enumerate}
\end{lemma}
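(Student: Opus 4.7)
The plan is to rewrite the denominator of $a_\delta$ as $D_\delta(z) := 1 + z - (\lambda_{LSW}-\delta) z^{1/3}$ and base everything on a careful analysis of this single function. The point is that $D_0(1/2) = 3/2 - \lambda_{LSW}(1/2)^{1/3} = 3/2 - 3(1/2)^{2/3}(1/2)^{1/3} = 0$ and $D_0'(1/2) = 1 - \tfrac{1}{3}\lambda_{LSW}(1/2)^{-2/3} = 0$, while a direct computation of $D_0''(z)=\tfrac{2}{9}\lambda_{LSW}z^{-5/3}$ gives $D_0''(1/2) = 4/3 > 0$, so $D_0$ has a non-degenerate quadratic zero at $z=1/2$. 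The $\delta$-perturbation $\delta z^{1/3}$ lifts this minimum while keeping the curvature, and this structural fact is what drives all the claims.

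First I would handle (i) by locating the minimizer of $D_\delta$: the critical-point equation $1 = \tfrac{1}{3}(\lambda_{LSW}-\delta)z^{-2/3}$ gives $z^* = \bigl((\lambda_{LSW}-\delta)/3\bigr)^{3/2} = 1/2 + \DO\delta$, and then the identity $D_\delta(z^*) = 1 - 2z^*$ (obtained by eliminating the $z^{1/3}$ term using the critical-point relation) yields $D_\delta(z^*) = \delta \cdot 2^{-1/3}(1+\DO\delta)$, which inverts to the stated maximum of $a_\delta$. Claim (ii) follows from direct inspection: $\int_0^1 z^{-2/3}dz < \infty$, and solving $b_\delta(z)\geq 0$ gives the threshold $z \geq \bigl((\lambda_{LSW}-\delta)/6\bigr)^{3/2}$, which at $\delta = 0$ equals $(1/2)^{5/2}$ and only decreases for $\delta > 0$. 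Claim (iii) is a one-line asymptotic since both $(\lambda_{LSW}-\delta)z^{1/3}/z$ and $(\lambda_{LSW}-\delta)z^{-2/3}$ tend to $0$ as $z\to\infty$. Claim (iv) follows because on the set where $|z-1/2|>\sigma$ we have $D_0(z)$ bounded below by a $\sigma$-dependent positive constant, so $a_\delta - a_0 = -\delta z^{1/3} a_\delta a_0$ is uniformly $\DO\delta$ there.

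The main obstacle is clearly claim (v), where the $1/\sqrt\delta$ blow-up must be extracted. My plan here is to split $\int_0^1 a_\delta(y)\,dy = \int_{|y-1/2|<\eta}+\int_{|y-1/2|\geq\eta}$ for a small but fixed $\eta$. On the outer piece, (iv) gives a uniformly bounded contribution $\DO{1}$. On the inner piece, I would use the Taylor expansion $D_\delta(y) = \tfrac{2}{3}(y-1/2)^2 + \delta\cdot 2^{-1/3} + \DO{|y-1/2|^3} + \DO{\delta|y-1/2|}$, absorb the error terms into a $(1+\Do{1})$ factor, and then evaluate the Gaussian-type integral
\begin{equation*}
\int_{-\eta}^{\eta}\frac{dy}{\tfrac{2}{3}y^2+\delta\cdot 2^{-1/3}}
=\frac{1}{\sqrt{\tfrac{2}{3}\cdot\delta\cdot 2^{-1/3}}}\,2\arctan\!\left(\eta\sqrt{\tfrac{2/3}{\delta\cdot 2^{-1/3}}}\right).
\end{equation*}
As $\delta\to 0$ the argument of the $\arctan$ tends to $+\infty$ and the whole expression becomes $\pi\sqrt{3\cdot 2^{1/3}/(2\delta)}\,(1+\Do 1)$, yielding $\kappa = \pi\sqrt{3\cdot 2^{1/3}/2}$. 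The subtle part is that the cubic and mixed error terms must be controlled uniformly in $\delta$ on a window whose width (in the rescaled variable $y/\sqrt\delta$) is unbounded; this I would handle by the change of variables $y = \sqrt\delta\,s$, which turns both $\DO{|y-1/2|^3}$ and $\DO{\delta|y-1/2|}$ into genuinely small corrections relative to the leading $\tfrac{2}{3}y^2+\delta\cdot 2^{-1/3}$ on the relevant range $|s|\lesssim\eta/\sqrt\delta$, and then bound the resulting perturbation of the $\arctan$-integral by a multiplicative $(1+\Do 1)$ factor via dominated convergence.
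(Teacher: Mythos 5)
Your proof is correct and takes essentially the same approach as the paper's: items 1--4 the paper treats as immediate from the definitions, and for item 5 both proofs exploit the non-degenerate quadratic minimum of the denominator $D_\delta$ near $z=1/2$, rescale via $y=\tfrac{1}{2}+\sqrt\delta\,\eta$, and reduce to the Lorentzian integral, giving $\kappa=\sqrt{3}\,\pi/2^{1/3}$ (which agrees with your $\pi\sqrt{3\cdot 2^{1/3}/2}$). Your inner/outer split and explicit control of the cubic and mixed remainder terms merely supply detail that the paper leaves implicit.
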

\begin{proof}
The assertions
\ref{Lem:Prop.Prms.1.Item1}--\ref{Lem:Prop.Prms.1.Item4} follow
immediately from the definitions of $a_\delta$ and $b_\delta$,
compare \eqref{Def.Elem.Funct}. With
$y=\tfrac{1}{2}+\sqrt\delta{\eta}$ we find
\begin{align*}
\sqrt{\delta}\,a_\delta\at{y}=\frac{2}{2^{2/3}+\tfrac{4}{3}\eta^2}\at{1+\Do{1}}.
\end{align*}
This expansion implies
\begin{align*}
\sqrt\delta\int\limits_0^1{a}_\delta\at{y}\,dy=\at{1+\Do{1}}\int\limits_{-\infty}^\infty\frac{2}{2^{2/3}+\tfrac{4}{3}\eta^2}\,d\eta
=\at{1+\Do{1}}\frac{\sqrt{3}\,\pi}{2^{1/3}},
\end{align*}
and the proof is complete.
\end{proof}
\subsubsection{Properties of $\varrho_{\delta}$ and $R_\delta$}
%
%
%
\begin{lemma} %
\label{Lem:Props.Rho} %
For $\delta\leq{1}$ we have
\begin{math}
\int_0^1\varrho_\delta\at{\xi}\,d{\xi}\leq{C},
\end{math} %
where $\varrho_\delta$ is defined in \eqref{Def.Rho}. Moreover, for
each $0<\si<\tfrac{1}{2}$ there exist constants $c_\sigma$ and
$C_\sigma$ such that
\begin{enumerate}
\item $\varrho_\delta\at{\xi}\geq{c_\si}$ for
$\tfrac{1}{2}+\si\leq{\xi}\leq{1}$,
\item $\varrho_\delta\at{\xi}\leq\Do{1}\,{C_\si}$ for
$0\leq{\xi}\leq\tfrac{1}{2}-\si$.
\end{enumerate}
\end{lemma}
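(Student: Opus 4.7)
My plan is to establish the two pointwise estimates first and then derive the uniform $L^1$ bound from them. All three assertions rest on the properties of $a_\delta$ and $b_\delta$ collected in Lemma~\ref{Lem:Prop.Prms.1}: $a_\delta\to a_0$ uniformly away from $z=\tfrac12$ but blows up like $1/\delta$ near $\tfrac12$, $\int_0^1 a_\delta(y)\,dy\sim\kappa/\sqrt\delta$ with the mass concentrating in a $\sqrt\delta$-neighborhood of $\tfrac12$, and $b_\delta$ is continuous with $b_0\nat{\tfrac12}=1$. In particular, I may fix once and for all a small $\si_0\in(0,\tfrac12)$ such that $b_\delta\geq\tfrac12$ on $\ccinterval{\tfrac12-\si_0}{\tfrac12+\si_0}$ for all sufficiently small $\delta$.

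The lower bound on $\ccinterval{\tfrac12+\si}{1}$ is immediate. By Lemma~\ref{Lem:Prop.Prms.1}.\ref{Lem:Prop.Prms.1.Item4} both $a_\delta$ and $b_\delta$ are uniformly bounded above and below by positive $\si$-dependent constants on this interval, so $\xi\,a_\delta(\xi)\geq c_\si$ and $\int_\xi^1 a_\delta b_\delta\,dy\leq C_\si$, which combine to give $\varrho_\delta(\xi)\geq c_\si\exp(-C_\si)$. The upper bound on $\ccinterval{0}{\tfrac12-\si}$ is the technical heart. The prefactor $\xi\,a_\delta(\xi)$ is again bounded by a $\si$-dependent constant $C_\si$ via Lemma~\ref{Lem:Prop.Prms.1}.\ref{Lem:Prop.Prms.1.Item4}. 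The new input is that the one-sided integral $\int_{\tfrac12}^{\tfrac12+\si_0}a_\delta(y)\,dy$ is already of order $1/\sqrt\delta$: applying the rescaling $y=\tfrac12+\sqrt\delta\,\eta$ used in the proof of Lemma~\ref{Lem:Prop.Prms.1}.\ref{Lem:Prop.Prms.1.Item5} yields a constant $c>0$, independent of $\si$, such that $\int_{\tfrac12}^{\tfrac12+\si_0}a_\delta\,dy\geq c/\sqrt\delta$. Since $b_\delta\geq\tfrac12$ on this subinterval and it is contained in $\cinterval{\xi}{1}$ (because $\xi\leq\tfrac12-\si<\tfrac12$), it follows that $\int_\xi^1 a_\delta b_\delta\,dy\geq c/(2\sqrt\delta)$ and therefore $\exp\at{-\int_\xi^1 a_\delta b_\delta\,dy}\leq\exp\at{-c/(2\sqrt\delta)}=\Do{1}$. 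Multiplication with the prefactor gives $\varrho_\delta(\xi)\leq\Do{1}\,C_\si$.

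For the uniform $L^1$ bound I split $\int_0^1\varrho_\delta\,d\xi=\int_0^{\tfrac12-\si_0}+\int_{\tfrac12-\si_0}^{1}$. The first piece is at most $(\tfrac12-\si_0)\,\Do{1}\,C_{\si_0}=\Do{1}$ by the upper bound of the previous step applied with $\si=\si_0$. On the second interval I use $\xi\leq 1$ together with $1/b_\delta\leq 2$ to observe that the resulting integrand is controlled by an exact $\xi$-derivative:
\begin{equation*}
\int_{\tfrac12-\si_0}^{1}\varrho_\delta(\xi)\,d\xi
\leq 2\int_{\tfrac12-\si_0}^{1}a_\delta(\xi)\,b_\delta(\xi)\exp\at{-\int_\xi^1 a_\delta b_\delta\,dy}d\xi
=2\,\Bat{1-\exp\at{-\int_{\tfrac12-\si_0}^{1}a_\delta b_\delta\,dy}}\leq 2,
\end{equation*}
and summing the two contributions yields $\int_0^1\varrho_\delta\leq C$. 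The main technical obstacle is the lower bound $\int_\xi^1 a_\delta b_\delta\,dy\geq c/\sqrt\delta$ in the second step, which requires combining the $1/\sqrt\delta$ mass concentration of $a_\delta$ near $\tfrac12$ (through the explicit rescaling) with the fact that the competing factor $b_\delta$ remains strictly positive there; the remainder of the argument is routine.
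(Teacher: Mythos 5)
Your proof is correct and follows essentially the same route as the paper: the two pointwise bounds come from the expandability of $a_\delta,b_\delta$ away from $z=\tfrac12$ plus the $1/\sqrt\delta$ mass concentration of $a_\delta$ near $z=\tfrac12$, and the $L^1$ bound is obtained by recognizing an exact $\xi$-derivative of $\exp\bigl(-\int_\xi^1 a_\delta b_\delta\,dy\bigr)$ after bounding the factor $\xi/b_\delta$. The only (cosmetic) difference is that you split $\int_0^1$ into two pieces, folding $[\tfrac12+\si_0,1]$ into the exact-derivative integral, whereas the paper uses a three-piece split and handles that outer interval by the trivial bound $\varrho_\delta\le C_{\si_0}$; both are fine.
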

\begin{proof}
The existence of $c_\si$ and $C_\si$ is provided by Lemma
\ref{Lem:Prop.Prms.1}, so there remains to show
\begin{align*}
\int\limits_{1/2-\si_0}^{1/2+\si_0}\varrho_\delta\at{\xi}\,d{\xi}\leq{C}
\end{align*}
for fixed but small $\si_0$. According to Lemma
\ref{Lem:Prop.Prms.1} there exist constants $c$ and $C$ such that
\begin{align*}
\varrho_\delta\at{\xi}\leq{C}\,a_\delta\at{\xi}\exp\at{-c\int\limits_\xi^1\,a_\delta\at{y}\,d{y}}=
\frac{C}{c}\frac{\d}{\d{\xi}}\exp\at{-c\int\limits_\xi^1\,a_\delta\at{y}\,d{y}}\,d{\xi}
\end{align*}
for all $\xi$ with $\abs{\xi-\tfrac{1}{2}}\leq\si_0$,
and we conclude
\begin{align*}
\int\limits_{1/2-\si_0}^{1/2+\si_0}\varrho_\delta\at{\xi}\,d{\xi}
&\leq{}\frac{C}{c}%
\exp\at{-c\int\limits_{1/2+\si_0}^1\,a_\delta\at{y}\,d{y}}-\exp\at{-c\int\limits_{1/2-\si_0}^1\,a_\delta\at{y}\,d{y}},
\end{align*}
which gives the desired result.
\end{proof}
\begin{corollary}
\label{Cor:Props.Rho} %
The functionals $R$ are uniformly Lipschitz continuous with respect
to $h\in{Z}$ for $\delta\leq{1}$ with
\begin{align*}
R_\delta[h]\xrightarrow{\delta\to{0}}R_0[h]\deq%
\int\limits_{1/2}^1\xi\,a_0\at{\xi}\,h\at\xi\,\exp\at{-\int\limits_{\xi}^1\,a_0\at{y}\,b_0\at{y}\,d{y}}\,d\xi
\end{align*}
for all $h\in{Z}$.
\end{corollary}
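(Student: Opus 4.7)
The plan is to prove the two assertions of the corollary---uniform Lipschitz continuity and pointwise convergence $R_\delta[h]\to R_0[h]$---separately, using Lemma \ref{Lem:Props.Rho} as the main technical input. The Lipschitz estimate is immediate: by linearity of $R_\delta$ in $h$,
\begin{equation*}
\abs{R_\delta[h_1]-R_\delta[h_2]}\leq\snorm{h_1-h_2}\int\limits_0^1\varrho_\delta(\xi)\,d\xi\leq C\fnorm{h_1-h_2},
\end{equation*}
where $\snorm{h_1-h_2}\leq\fnorm{h_1-h_2}$ follows from the definition of $Z$, and the uniform-in-$\delta$ bound $\int_0^1\varrho_\delta\leq C$ is the first assertion of Lemma \ref{Lem:Props.Rho}.

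For the convergence I will fix $h\in Z$ and $\eta>0$, and split $[0,1]$ into the three zones $[0,1/2-\sigma]$, $[1/2-\sigma,1/2+\sigma]$, $[1/2+\sigma,1]$ around the critical point $\xi=1/2$, correspondingly splitting $R_0[h]$ at $\xi=1/2+\sigma$. On $[0,1/2-\sigma]$ the decay estimate of Lemma \ref{Lem:Props.Rho} yields $\varrho_\delta\leq\Do{1}\,C_\sigma$, so for fixed $\sigma$ this contribution vanishes as $\delta\to0$. On $[1/2+\sigma,1]$, item \ref{Lem:Prop.Prms.1.Item4} of Lemma \ref{Lem:Prop.Prms.1} supplies uniform convergence $a_\delta\to a_0$ and $b_\delta\to b_0$, hence $\varrho_\delta\to\varrho_0$ uniformly and $\int_{1/2+\sigma}^1\varrho_\delta h\,d\xi\to\int_{1/2+\sigma}^1\varrho_0 h\,d\xi$.

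The decisive step concerns the transition layer $[1/2-\sigma,1/2+\sigma]$, where I will reuse the telescoping estimate from the proof of Lemma \ref{Lem:Props.Rho}: using $a_\delta b_\delta\geq c\,a_\delta$ on that interval and recognising $c\,a_\delta$ as the logarithmic derivative of $\exp\at{-c\int_\xi^1 a_\delta\,dy}$,
\begin{equation*}
\int\limits_{1/2-\sigma}^{1/2+\sigma}\varrho_\delta(\xi)\,d\xi\leq\frac{C}{c}\exp\at{-c\int\limits_{1/2+\sigma}^1 a_\delta(y)\,dy}.
\end{equation*}
Since $\int_{1/2+\sigma}^1 a_\delta\,dy\to\int_{1/2+\sigma}^1 a_0\,dy$ as $\delta\to0$, and the latter diverges as $\sigma\to0$ because $a_0(y)\sim\tfrac{3}{2}(y-\tfrac12)^{-2}$ near $\tfrac12^+$, the right-hand side can be made arbitrarily small by choosing $\sigma$ small first and then $\delta$ small. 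The same singularity structure makes $\varrho_0$ integrable on $(1/2,1]$, so $\int_{1/2}^{1/2+\sigma}\varrho_0 h\,d\xi\to0$ as $\sigma\to0$. Given $\eta>0$, I will therefore first fix $\sigma$ so that both transition-layer contributions are below $\eta/4$ once $\delta$ is sufficiently small, and then choose $\delta_0(\sigma)$ so that the two outer contributions are also at most $\eta/4$ each; the triangle inequality completes the argument.

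The main obstacle is precisely this transition-layer bound: the order of quantifiers must be inverted correctly---$\sigma$ first, relative to the universal singularity of $a_0$, and only then $\delta$---and the explicit telescoping estimate recorded in the proof of Lemma \ref{Lem:Props.Rho} is what allows this decoupling, turning what could otherwise be a delicate concentration-of-measure question into a routine triangle-inequality argument.
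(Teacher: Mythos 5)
The paper does not supply an explicit proof of Corollary~\ref{Cor:Props.Rho}, presenting it as a direct consequence of Lemma~\ref{Lem:Props.Rho}; your argument fills in that gap in the natural way, deriving the uniform Lipschitz bound from $\int_0^1\varrho_\delta\leq C$ and the convergence from the three-zone decomposition around $\xi=\tfrac12$ that the lemma's estimates (decay on $[0,\tfrac12-\si]$, concentration bound on $[\tfrac12-\si,\tfrac12+\si]$, uniform convergence of $a_\delta,b_\delta$ away from $\tfrac12$) implicitly suggest. The quantifier order (fix $\si$ first, then $\delta$) and the observation that $a_0(y)\sim\tfrac{3}{2}(y-\tfrac12)^{-2}$ makes both $\int_{1/2+\si}^1 a_0$ diverge and $\varrho_0$ integrable at $\tfrac12^+$ are exactly the right ingredients; the proof is correct.
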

%
%
%
\subsubsection{Properties of $\psi$}
%
%
%
\begin{lemma}
\label{Lem:Prop.HomSol}%
The estimates
\begin{align}
\label{Lem:Prop.HomSol.Eqn1}%
\frac{\psi\at{z;\eps_2,\teps_2,\delta}}{\psi\at{z;\eps_1,\teps_1,\delta}}\leq{}
{\exp\at{C\frac{\abs{\eps_2-\eps_1}+\abs{\teps_2-\teps_1}}{\sqrt\delta}}},
\end{align}
are satisfied for $0\leq{z}\leq{1}$, $\delta\leq{1}$, and arbitrary
$\pair{\eps_1}{\teps_1}$, $\pair{\eps_2}{\teps_2}$.
\end{lemma}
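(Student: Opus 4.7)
The plan is to exploit the explicit representation of $\psi$ coming from \eqref{eq:Def.HomSol}, together with the $L^1$-bound on $a_\delta$ from Lemma \ref{Lem:Prop.Prms.1}\ref{Lem:Prop.Prms.1.Item5}. Write $\psi\at{z;\eps,\teps,\delta}=C\at{\eps,\teps,\delta}\,\widetilde\psi\at{z;\eps,\teps,\delta}$, where
\begin{equation*}
\widetilde\psi\at{z;\eps,\teps,\delta}\deq\exp\Bat{-\int_0^z a_\delta\at{y}\bat{b_\delta\at{y}-\teps{}y-\eps}\,dy}
\end{equation*}
is the unnormalized solution of \eqref{eq:Def.HomSol} with $\widetilde\psi\at{0}=1$, and the normalization constant $C\at{\eps,\teps,\delta}=\nat{\int_0^1 z\,\widetilde\psi\at{z;\eps,\teps,\delta}\,dz}^{-1}$ is determined by \eqref{eq:Mass.Norm}.

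The crucial observation is that in the ratio of two unnormalized solutions the delicate $a_\delta\,b_\delta$ contribution cancels exactly, leaving only the comparatively harmless dependence on $\eps$ and $\teps$:
\begin{equation*}
\frac{\widetilde\psi\at{z;\eps_2,\teps_2,\delta}}{\widetilde\psi\at{z;\eps_1,\teps_1,\delta}}
=\exp\Bat{\int_0^z a_\delta\at{y}\bat{\at{\eps_2-\eps_1}+y\at{\teps_2-\teps_1}}\,dy}.
\end{equation*}
Using $0\leq y\leq z\leq1$ together with Lemma \ref{Lem:Prop.Prms.1}\ref{Lem:Prop.Prms.1.Item5}, which yields $\int_0^1 a_\delta\at{y}\,dy\leq C/\sqrt\delta$, I obtain the pointwise estimate $\widetilde\psi_2\at{z}/\widetilde\psi_1\at{z}\leq M$ on $\ccinterval{0}{1}$, where $M\deq\exp\nat{C\bat{\abs{\eps_2-\eps_1}+\abs{\teps_2-\teps_1}}/\sqrt\delta}$. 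By symmetry of $M$ under exchange of the indices, the same bound holds with $1$ and $2$ swapped.

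It remains to transfer this to the normalized functions. Integrating the two pointwise inequalities against the weight $z$ over $\ccinterval{0}{1}$ and inverting gives
\begin{equation*}
\frac{1}{M}\leq\frac{C\at{\eps_2,\teps_2,\delta}}{C\at{\eps_1,\teps_1,\delta}}\leq M,
\end{equation*}
so multiplying by the previously established bound for $\widetilde\psi_2/\widetilde\psi_1$ yields $\psi_2\at{z}/\psi_1\at{z}\leq M^2$ on $\ccinterval{0}{1}$. Absorbing the factor $2$ in the constant $C$ gives \eqref{Lem:Prop.HomSol.Eqn1}. There is no genuine obstacle here; the only point worth emphasizing is the cancellation of the $b_\delta$-term in the ratio, which reduces the problem to the $\sqrt\delta^{-1}$-scaling of $\norm{a_\delta}_{L^1\nat{0,1}}$ already isolated in Lemma \ref{Lem:Prop.Prms.1}.
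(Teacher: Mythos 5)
Your proof is correct and follows essentially the same route as the paper's: you factor out the normalization constant, observe that the $a_\delta b_\delta$ part cancels in the ratio of unnormalized solutions, bound the surviving exponential via $\int_0^1 a_\delta(y)(1+y)\,dy \leq C/\sqrt\delta$, and then control the ratio of normalization constants from the mass constraint \eqref{eq:Mass.Norm}. The paper phrases this as a Variation of Constants identity with an unknown factor $d$ and splits the constant as $\widetilde{C}^{1/2}\cdot\widetilde{C}^{1/2}$ to land exactly on $\widetilde{C}$, whereas you get $M^2$ and absorb the factor $2$ into the generic constant — a purely cosmetic difference.
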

\begin{proof}
The Variation of Constants formula provides
\begin{align*}
\psi\at{z;\eps_2,\teps_2,\delta}=d\,\psi\at{z;\eps_1,\teps_1,\delta}
\exp\at{\int\limits_0^za_\delta\at{y}\at{\eps_2-\eps_1+\at{\teps_2-\teps_1}{y}}\,dy}
\end{align*}
for some factor $d$ which depends on $\eps_1$, $\eps_2$, $\teps_1$,
$\teps_2$, and $\delta$. Thanks to
\begin{align*}
\int_0^za_\delta\at{y}\,dy + \int_0^za_\delta\at{y}\,y\,dy\leq\frac{C}{2\sqrt\delta}
\end{align*}
for $0\leq{z}\leq{1}$ we conclude
\begin{align*}
d\,\widetilde{C}^{-1/2}\,
\psi\at{z;\eps_1,\teps_1,\delta}\leq{}\psi\at{z;\eps_2,\teps_2,\delta}\leq{}
d\, \widetilde{C}^{1/2} \,\psi\at{z;\eps_1,\teps_1,\delta},
\end{align*}
where $\widetilde{C}$ denotes the constant on the r.h.s. in
\eqref{Lem:Prop.HomSol.Eqn1}. Finally, the normalization condition
\eqref{eq:Mass.Norm} yields
\begin{align*}
\widetilde{C}^{-1/2}\leq{}d\leq{\widetilde{C}^{1/2}},
\end{align*}
and the proof is complete.
\end{proof}
%
%
%
\subsubsection{Properties of $\widehat{\Gamma}_{i}$ and $\Gamma_{i}$}
%
%
Recall Definition \eqref{Def.Ga.2}, which implies that the functions
$\xi\mapsto\widehat{\Gamma}_i\at{\xi;\delta}$, $i=1,2$, are strictly
increasing and satisfy the ODE
\begin{align}
\label{Lem:Prop.GaH.Eqn1}
\VarDer{\widehat{\Gamma}_i\at{\xi;\delta}}{\xi}=
\gamma_i\at\xi+a_\delta\at{\xi}b_\delta\at{\xi}\widehat{\Gamma}_i\at{\xi;\delta}>0,\quad
\widehat{\Gamma}_i\at{0;\delta}=0.
\end{align}
\begin{lemma}
For all $\delta\leq1$ we have
\begin{align}
\label{Lem:Prop.GaH.2.Eqn1} %
{c}\,\exp\at{\frac{c}{\sqrt{\delta}}}\leq
\cnstKi\leq{C}\,\exp\at{\frac{C}{\sqrt{\delta}}},
\end{align}
and
\begin{align}
\label{Lem:Prop.GaH.2.Eqn3} %
c\,\cnstKT
\leq{}%
\cnstKO
\leq%
\cnstKT.
\end{align}
\end{lemma}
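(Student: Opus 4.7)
The core object is the exponent $F_\delta(z) \deq \int_z^1 a_\delta(y)\,b_\delta(y)\,dy$, in terms of which $\cnstKi = \widehat{\Gamma}_i(1;\delta) = \int_0^1 \gamma_i(z)\, e^{F_\delta(z)}\,dz$ by \eqref{Def.Ga.2}. The plan is first to derive a workable decomposition of $F_\delta$ and then to exploit that the peak of $a_\delta$ near $y=1/2$ forces $F_\delta$ to be of order $1/\sqrt\delta$ on a macroscopic subinterval. Since the numerator $b_\delta(y)=2-\tfrac{1}{3}(\lambda_{LSW}-\delta)y^{-2/3}$ differs from $\tfrac{d}{dy}\nat{1+y-(\lambda_{LSW}-\delta)y^{1/3}}=1-\tfrac{1}{3}(\lambda_{LSW}-\delta)y^{-2/3}$ only by the additive constant $1$, I would first establish the identity
\begin{align*}
F_\delta(z) = \log\frac{2-\lambda_{LSW}+\delta}{1+z-(\lambda_{LSW}-\delta)z^{1/3}} + \int_z^1 a_\delta(y)\,dy.
\end{align*}

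For the two-sided bound \eqref{Lem:Prop.GaH.2.Eqn1}, fix any $\sigma\in(0,\tfrac{1}{2})$. On $[\sigma,\tfrac{1}{2}-\sigma]$ the denominator inside the logarithm is bounded away from $0$ uniformly in $\delta$ by Lemma \ref{Lem:Prop.Prms.1}.\ref{Lem:Prop.Prms.1.Item4}, so the log term is $O(1)$; meanwhile a minor variant of the scaling argument in the proof of Lemma \ref{Lem:Prop.Prms.1}.\ref{Lem:Prop.Prms.1.Item5} gives $\int_{1/2-\sigma}^{1/2+\sigma} a_\delta(y)\,dy \geq c_\sigma/\sqrt\delta$, so that $F_\delta(z)\geq c/\sqrt\delta$ on that interval. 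The matching upper bound $F_\delta(z)\leq C/\sqrt\delta$ is uniform on all of $[0,1]$ because the logarithmic contribution is at worst $O(\log(1/\delta))$, which is absorbed into $C/\sqrt\delta$ for small $\delta$. Since $\gamma_i(z)\geq c$ on $[\sigma,\tfrac{1}{2}-\sigma]$, integrating yields $\cnstKi\geq c\,e^{c/\sqrt\delta}$, while integrating the uniform upper bound over $[0,1]$ gives $\cnstKi\leq C\,e^{C/\sqrt\delta}$.

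For the comparison \eqref{Lem:Prop.GaH.2.Eqn3}, the upper bound $\cnstKO\leq\cnstKT$ is immediate from $\gamma_1(z)=z\leq1=\gamma_2(z)$ on $[0,1]$. For the lower bound $\cnstKO\geq c\,\cnstKT$ the idea is that $F_\delta$ has bounded oscillation outside the transition region. Combining Lemma \ref{Lem:Prop.Prms.1}.\ref{Lem:Prop.Prms.1.Item4} (boundedness of $a_\delta$ away from $1/2$) with the integrability of $b_\delta$ from Lemma \ref{Lem:Prop.Prms.1}.\ref{Lem:Prop.Prms.1.Item2}, one obtains $|F_\delta(z_1)-F_\delta(z_2)|\leq C_\sigma$ whenever $z_1,z_2\in[0,\tfrac{1}{2}-\sigma]$. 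Moreover $F_\delta^{\prime}=-a_\delta b_\delta\leq0$ on $[(1/2)^{5/2},1]$ by Lemma \ref{Lem:Prop.Prms.1}.\ref{Lem:Prop.Prms.1.Item2}, so $F_\delta$ is non-increasing there. Choosing $\sigma$ small enough that $(1/2)^{5/2}<\tfrac{1}{2}-\sigma$, the global maximum of $F_\delta$ on $[0,1]$ is therefore attained inside $[0,\tfrac{1}{2}-\sigma]$ and satisfies $\max F_\delta\leq F_\delta(1/4)+C_\sigma$. This yields $\cnstKT\leq e^{C_\sigma}\exp(F_\delta(1/4))$, while the same oscillation estimate gives $\cnstKO\geq\int_{1/4}^{3/8}z\,e^{F_\delta(z)}\,dz\geq c\,\exp(F_\delta(1/4))$, whence the desired ratio.

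The one nontrivial analytical input is the localized peak estimate $\int_{1/2-\sigma}^{1/2+\sigma}a_\delta(y)\,dy\geq c_\sigma/\sqrt\delta$ used above; under the substitution $y=\tfrac{1}{2}+\sqrt\delta\,\eta$ already used in the proof of Lemma \ref{Lem:Prop.Prms.1}.\ref{Lem:Prop.Prms.1.Item5}, this reduces to showing that $\int_{-\sigma/\sqrt\delta}^{\sigma/\sqrt\delta}\frac{2}{2^{2/3}+\tfrac{4}{3}\eta^2}\,d\eta$ captures a fixed positive fraction of $\sqrt{3}\pi/2^{1/3}$ as $\delta\to0$, which is clear since $\sigma/\sqrt\delta\to\infty$. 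The genuine difficulty lies not in any single estimate but in orchestrating these pieces so that the log term (which is only $O(\log(1/\delta))$) is everywhere dominated by the much larger $O(1/\sqrt\delta)$ contribution from the peak of $a_\delta$.
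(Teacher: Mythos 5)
Your proof is correct, and your treatment of the ratio bound \eqref{Lem:Prop.GaH.2.Eqn3} takes a genuinely different route from the paper's. For the two-sided bound \eqref{Lem:Prop.GaH.2.Eqn1} both you and the paper ultimately isolate the $O(1/\sqrt\delta)$ contribution of the peak of $a_\delta$ near $z=\tfrac12$ from the $O(1)$ contribution elsewhere; the paper does this by splitting $\int_z^1 a_\delta b_\delta$ at $\xi=\tfrac14$, whereas your decomposition $F_\delta(z)=\log\bigl((2-\lambda_{LSW}+\delta)/q(z)\bigr)+\int_z^1 a_\delta$ with $q(z)=1+z-(\lambda_{LSW}-\delta)z^{1/3}$ (which exploits that $b_\delta=1+q'$ so $a_\delta b_\delta = 1/q + q'/q$) is a cleaner packaging of the same idea. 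The real divergence is in the proof of $\cnstKO\geq c\,\cnstKT$: the paper establishes $\widehat{\Gamma}_1(\tfrac14;\delta)\geq\tilde c\,\widehat{\Gamma}_2(\tfrac14;\delta)$ directly and then runs an ODE supersolution comparison on $[\tfrac14,1]$ using $\gamma_1(z)=z\geq\tfrac14$ in \eqref{Lem:Prop.GaH.Eqn1}, whereas you argue purely at the level of the exponent $F_\delta$, noting that it is non-increasing on $[(\tfrac12)^{5/2},1]$ and has $C_\sigma$-bounded oscillation on $[0,\tfrac12-\sigma]$, so both $\cnstKO$ and $\cnstKT$ are comparable to $\exp(F_\delta(\tfrac14))$. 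Your argument is perhaps more elementary and stays within integral estimates; the paper's ODE comparison is shorter once \eqref{Lem:Prop.GaH.Eqn1} is in hand and more directly exhibits why the weight $\gamma_1(z)=z$ only costs a constant factor of $\tfrac14$ past the transition point. One small imprecision to note: the global maximum of $F_\delta$ is attained in $[0,(\tfrac12)^{5/2}]$, not merely in $[0,\tfrac12-\sigma]$, but since you then invoke bounded oscillation on the larger interval the conclusion $\max F_\delta\leq F_\delta(\tfrac14)+C_\sigma$ is unaffected.
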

\begin{proof}
Exploiting  the properties of $a_\delta$ and $b_\delta$ from Lemma
\ref{Lem:Prop.Prms.1} we find
\begin{align*}
\widehat{\Gamma}_{i}\at{1;\delta}%
\leq\int\limits_0^{1}\ga_i\at{z}%
\exp\at{\int\limits_0^{\tfrac{1}{4}}a_\delta\at{y}b_\delta\at{y}\,dy}
\exp\at{\int\limits_{\tfrac{1}{4}}^{1}a_\delta\at{y}b_\delta\at{y}\,dy}
\,dz
\leq%
C\exp\at{\frac{C}{\sqrt{\delta}}},
\end{align*}
as well as
\begin{align*}
\widehat{\Gamma}_{i}\at{1;\delta}%
\geq%
c\int\limits_0^{1/4}\ga_i\at{z}\exp\at{c\int\limits_{1/4}^{1}a_\delta\at{y}\,dy}\,dz
\geq{c}\exp\at{\frac{c}{\sqrt{\delta}}},
\end{align*}
and this gives \eqref{Lem:Prop.GaH.2.Eqn1} since $\cnstKi=\widehat{\Gamma}_{i}\at{1;\delta}$ by definition. The inequality $\cnstKO\leq\cnstKT$ is
obvious as $\ga_1\at{z}\leq\ga_2\at{z}$ for all $0\leq{z}\leq{1}$.
Moreover, there exists a constant $\tilde{c}$ such that
\begin{align*}
\widehat{\Gamma}_1\at{\tfrac{1}{4};\delta}\geq
\tilde{c}\,\widehat{\Gamma}_2\at{\tfrac{1}{4};\delta},
\end{align*}
where we used that $a_\delta$, $b_\delta$ can be expanded in powers
of $\delta$ on $\cinterval{0}{\tfrac{1}{4}}$, and the ODE
\eqref{Lem:Prop.GaH.Eqn1} implies that $\widehat{\Gamma}_1$ is a
supersolution to the equation for
$c\,\widehat{\Gamma}_2\at{\xi;\delta}$ on
$\ccinterval{\tfrac{1}{4}}{1}$, where
$c=\min\{\tilde{c},\,\tfrac{1}{4}\}$. Hence we proved \eqref{Lem:Prop.GaH.2.Eqn3}.
\end{proof}
\begin{lemma}
\label{Lem:Prop.GaH.2}
Let $\delta\leq1$. Then
\begin{align}
\label{Lem:Prop.GaH.2.Eqn4}%
\widehat{\Gamma}_{i}\at{\xi;\delta}\leq%
C\,\cnstKi\xi^4,\quad\quad1\leq{\xi}<\infty,
\end{align}
and
\begin{align}
\label{Lem:Prop.GaH.2.Eqn2} %
\widehat{\Gamma}_i\at{\xi;\delta} &=\cnstKi\at{
\exp\at{-\int\limits_\xi^1\,a_\delta\at{y}\,b_\delta\at{y}\,dy}-\Do{1}\sqrt\delta},\quad\quad0\leq\xi\leq{1}.%
\end{align}%
In particular,
\begin{enumerate}
\item
$\widehat{\Gamma}_i\at{\xi;\delta}\leq{C_\si}$ for
$\xi\leq{}\tfrac{1}{2}-\si$,
\item
$\widehat{\Gamma}_i\at{\xi;\delta}\geq{}c_\si\cnstKi$ for
$\tfrac{1}{2}+\si\leq\xi\leq{1}$,
\end{enumerate}
with $0<\si<\tfrac{1}{2}$ arbitrary.
\end{lemma}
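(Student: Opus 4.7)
The starting point for both parts is the explicit formula
$$\widehat{\Gamma}_i(\xi;\delta)=\int_0^\xi \gamma_i(z)\exp\left(\int_z^\xi a_\delta(y)b_\delta(y)\,dy\right)dz$$
from Definition \eqref{Def.Ga}, combined with the identity $\int_z^\xi=\int_z^1-\int_\xi^1$, which factors out $\exp(-\int_\xi^1 a_\delta b_\delta\,dy)$ and identifies the integral from $0$ to $1$ with $K_{i,\delta}=\widehat{\Gamma}_i(1;\delta)$. Both (a) and (b) then reduce to controlling the correction generated by completing the $z$-integration to $[0,1]$.

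For part (b) this yields, for $\xi\in[0,1]$,
$$\widehat{\Gamma}_i(\xi;\delta)=K_{i,\delta}\exp\left(-\int_\xi^1 a_\delta b_\delta\,dy\right)-E_i(\xi;\delta),\quad E_i(\xi;\delta):=\int_\xi^1\gamma_i(z)\exp\left(-\int_\xi^z a_\delta b_\delta\,dy\right)dz,$$
and the claim follows once $|E_i(\xi;\delta)|\leq C$ uniformly in $\xi\in[0,1]$ and $\delta\leq 1$: dividing by $K_{i,\delta}\geq c\exp(c/\sqrt\delta)$ from \eqref{Lem:Prop.GaH.2.Eqn1} produces a relative error of size $\exp(-c/\sqrt\delta)$, far smaller than the required $\Do{1}\sqrt\delta$. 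To obtain the uniform bound I would split the $z$-range at $y=1/2$: for $z\in[\xi,1/2]$ the exponent $\int_\xi^z a_\delta b_\delta\,dy$ is bounded from below, using that $b_\delta(y)\sim-\tfrac{1}{3}\lambda_{LSW}y^{-2/3}$ is integrable near $0$ and $a_\delta$ is uniformly bounded away from its peak at $1/2$; for $z\geq 1/2$ with $\xi<1/2$ the interval $[\xi,z]$ crosses the peak and by Lemma \ref{Lem:Prop.Prms.1}(\ref{Lem:Prop.Prms.1.Item5}) $\int_\xi^z a_\delta b_\delta\,dy\geq c/\sqrt\delta$, so the exponential is already exponentially small.

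For part (a) I would integrate the ODE \eqref{Lem:Prop.GaH.Eqn1} from $\xi=1$ to obtain
$$\widehat{\Gamma}_i(\xi;\delta)=K_{i,\delta}\exp\left(\int_1^\xi a_\delta b_\delta\,dy\right)+\int_1^\xi\gamma_i(z)\exp\left(\int_z^\xi a_\delta b_\delta\,dy\right)dz.$$
Lemma \ref{Lem:Prop.Prms.1}(\ref{Lem:Prop.Prms.1.Item3}) gives $a_\delta(y)b_\delta(y)=2/y+\DO{y^{-5/3}}$ as $y\to\infty$ uniformly in $\delta$, so $\exp(\int_z^\xi a_\delta b_\delta\,dy)\leq C(\xi/z)^2$ for $z\geq 1$. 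Combined with $\gamma_i(z)\leq z$ and $K_{i,\delta}\geq c>0$ this yields $\widehat{\Gamma}_i(\xi;\delta)\leq CK_{i,\delta}\xi^3$, which is in particular bounded by $CK_{i,\delta}\xi^4$ as claimed.

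Finally, the two pointwise assertions fall out directly. For $\xi\leq 1/2-\sigma$, the exponent $\int_z^\xi a_\delta b_\delta\,dy$ in the definition is bounded by some $C_\sigma$ for all $z\in[0,\xi]$ (the peak of $a_\delta$ is avoided, and $b_\delta$ is integrable near $0$), so $\widehat{\Gamma}_i(\xi;\delta)\leq C_\sigma\int_0^\xi\gamma_i(z)\,dz\leq C_\sigma$. For $\xi\in[1/2+\sigma,1]$ the tail integral $\int_\xi^1 a_\delta b_\delta\,dy$ is likewise bounded by $C_\sigma$, hence $\exp(-\int_\xi^1 a_\delta b_\delta\,dy)\geq c_\sigma$, and combining with (b) gives $\widehat{\Gamma}_i(\xi;\delta)\geq (c_\sigma/2)K_{i,\delta}$ for $\delta$ sufficiently small. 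The main technical obstacle is the uniform bound on $E_i$ across the transition layer near $\xi=1/2$: a crude estimate based on $|a_\delta b_\delta|$ would be useless because $\int_0^1 a_\delta\,dy\sim 1/\sqrt\delta$, and one must instead exploit the nonnegativity of $a_\delta b_\delta$ on $[(1/2)^{5/2},\infty)$ so that the large exponent $\int_z^1 a_\delta b_\delta\,dy$ is cancelled against the prefactor $\exp(-\int_\xi^1 a_\delta b_\delta\,dy)$.
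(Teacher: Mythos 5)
The overall decomposition, $\widehat{\Gamma}_i(\xi;\delta)=K_{i,\delta}\exp\bigl(-\int_\xi^1 a_\delta b_\delta\,dy\bigr)-E_i(\xi;\delta)$ with the plan to show $\abs{E_i}\leq C$ uniformly, is exactly the paper's strategy, and your treatment of the large-$\xi$ bound \eqref{Lem:Prop.GaH.2.Eqn4} and of the two pointwise corollaries is fine. However, the justification of $\abs{E_i}\leq C$ across the transition layer as written is flawed in two places. First, for $z\in[\xi,\tfrac12]$ with $z$ close to $\tfrac12$, $a_\delta$ is \emph{not} bounded away from its peak (the peak sits at $\tfrac12+\DO{\delta}$ with width $\sim\sqrt\delta$), so that phrase cannot be the reason the exponent stays bounded below there. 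Second, and more seriously, the claim $\int_\xi^z a_\delta b_\delta\,dy\geq c/\sqrt\delta$ whenever $[\xi,z]$ crosses $\tfrac12$ is simply false: with $\xi=\tfrac12-\sqrt\delta$ and $z=\tfrac12+\sqrt\delta$ the substitution $y=\tfrac12+\sqrt\delta\,\eta$ from the proof of Lemma~\ref{Lem:Prop.Prms.1} gives $\int_\xi^z a_\delta\,dy\sim\int_{-1}^1\frac{2}{2^{2/3}+\tfrac43\eta^2}\,d\eta=\DO{1}$, so the exponential is of order one, not exponentially small. The correct and weaker fact that actually suffices — and that the paper uses — is obtained by splitting instead at $\xi_0=(\tfrac12)^{5/2}$, where $b_\delta$ changes sign (Lemma~\ref{Lem:Prop.Prms.1}, item~\ref{Lem:Prop.Prms.1.Item2}): for every $0\leq\xi\leq z\leq1$ one has
\begin{align*}
-\int_\xi^z a_\delta\at{y}b_\delta\at{y}\,dy
\;\leq\;
\int_0^{\xi_0} a_\delta\at{y}\,\abs{b_\delta\at{y}}\,dy
\;\leq\;C,
\end{align*}
since $a_\delta$ is bounded and $b_\delta$ integrable on $\ccinterval{0}{\xi_0}$, while $a_\delta b_\delta\geq 0$ on $\ccinterval{\xi_0}{1}$ so that portion of the integral can only help. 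This yields $\exp\bigl(-\int_\xi^z a_\delta b_\delta\,dy\bigr)\leq C$ and hence $E_i\leq C$, with no recourse to exponential smallness of the integrand. Your final paragraph does name the right mechanism (nonnegativity of $a_\delta b_\delta$ past $\xi_0$), but it is not what the body of the argument implements — the $c/\sqrt\delta$ estimate would need to be deleted and replaced by the bound above.
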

\begin{proof} We start with $\xi\geq1$. Lemma \ref{Lem:Prop.Prms.1}
provides%
\begin{align*}
\exp\at{\int\limits_1^\xi\,a_\delta\at{y}\,b_\delta\at{y}\,dy}\leq\exp\at{C+3\ln\at{\xi}}\leq{C}\xi^3
\end{align*}
and we conclude
\begin{align*}
\widehat{\Gamma}_i\at{\xi;\delta}
&=%
\int\limits_0^1 \ga_i\at{z}
\exp\at{\int\limits_z^\xi\,a_\delta\at{y}\,b_\delta\at{y}\,dy}%
\,dz + \int\limits_1^\xi \ga_i\at{z}
\exp\at{\int\limits_z^\xi\,a_\delta\at{y}\,b_\delta\at{y}\,dy}%
\,dz
\\&\leq%
C\xi^3 \int\limits_0^1 \ga_i\at{z}
\exp\at{\int\limits_z^1\,a_\delta\at{y}\,b_\delta\at{y}\,dy} \,dz +
C\xi^3  \int\limits_1^\xi \ga_i\at{z}\,dz
\leq%
\,C\xi^4\at{\cnstKi+1},
\end{align*}
which implies \eqref{Lem:Prop.GaH.2.Eqn4} thanks to
\eqref{Lem:Prop.GaH.2.Eqn1}.
Now consider $0\leq{}\xi\leq1$, and let
$\xi_0=\at{\tfrac{1}{2}}^{5/2}<\tfrac{1}{2}$ so that
$b_\delta\at{y}\geq0$ for all $y\geq{\xi_0}$, compare Lemma
\ref{Lem:Prop.Prms.1}. For $0\leq\xi\leq\xi_0$ we have
\begin{align*}
0&\leq%
\int\limits_\xi^1\ga_i\at{z}
\exp\at{\int\limits_z^\xi\,a_\delta\at{y}\,b_\delta\at{y}\,dy}%
\\&=%
\int\limits_\xi^{\xi_0}\ga_i\at{z}
\exp\at{-\int\limits_\xi^z\,a_\delta\at{y}\,b_\delta\at{y}\,dy}%
+\int\limits_{\xi_0}^1\ga_i\at{z}
\exp\at{-\int\limits_\xi^z\,a_\delta\at{y}\,b_\delta\at{y}\,dy}%
\,dz
\\&\leq%
\int\limits_\xi^{\xi_0}\ga_i\at{z}
\exp\at{\int\limits_0^{\xi_0}\,a_\delta\at{y}\,\abs{b_\delta\at{y}}\,dy}
\,dz + \int\limits_{\xi_0}^1\ga_i\at{z} \,dz\leq{C},
\end{align*}%
whereas for $\xi_0\leq\xi\leq1$ we find
\begin{align*}
0\leq%
\int\limits_\xi^1\ga_i\at{z}
\exp\at{\int\limits_z^\xi\,a_\delta\at{y}\,b_\delta\at{y}\,dy}%
\,dz
\leq%
\int\limits_\xi^1\ga_i\at{z} \,dz \leq{C}.
\end{align*}%
Therefore, for all $0\leq\xi\leq{1}$ we have
\begin{align*}
\widehat{\Gamma}_i\at{\xi;\delta} &=\int\limits_0^1 \ga_i\at{z}
\exp\at{\int\limits_z^\xi\,a_\delta\at{y}\,b_\delta\at{y}\,dy}%
\,dz- \int\limits_\xi^1 \ga_i\at{z}
\exp\at{\int\limits_z^\xi\,a_\delta\at{y}\,b_\delta\at{y}\,dy}%
\,dz
\\&\geq%
\int\limits_0^1 \ga_i\at{z}
\exp\at{\int\limits_z^1\,a_\delta\at{y}\,b_\delta\at{y}\,dy}%
\exp\at{-\int\limits_\xi^{1}\,a_\delta\at{y}\,b_\delta\at{y}\,dy}%
\,dz-C
\\&= %
\cnstKi
\exp\at{-\int\limits_\xi^1\,a_\delta\at{y}\,b_\delta\at{y}\,dy}%
-C,
\end{align*}%
and \eqref{Lem:Prop.GaH.2.Eqn2} follows due to
\eqref{Lem:Prop.GaH.2.Eqn1}. The remaining assertions are direct
consequences of \eqref{Lem:Prop.GaH.2.Eqn2} and Lemma
\ref{Lem:Prop.Prms.1}.
\end{proof}
From now on we assume that both $\eps$ and $\teps$ are small with
respect to $\delta$.
\begin{assumption}
\label{Ass:Prms.0} Suppose $\eps=\nDo{\sqrt\delta}$ and
$\teps=\nDo{\sqrt\delta}$.
\end{assumption}
\begin{lemma}
\label{Lem:Props.Ga} %
Under Assumption \ref{Ass:Prms.0} the estimates
\begin{align*}
\Gamma_i\at{\xi;\eps,{\teps},\delta}\leq{}\widehat{\Gamma}_i\at{\xi;\delta},
\quad0\leq\xi<\infty,%
\end{align*}
and
\begin{align}
\notag
\abs{\Gamma_i\at{\xi;\eps,{\teps},\delta}-\widehat{\Gamma}_i\at{\xi;\delta}}%
=\Do{1}\widehat{\Gamma}_i\at{\xi;\delta},\quad 0\leq\xi\leq{1},
\end{align}
hold for $i=1,2$ and all $\delta\leq1$.%
\end{lemma}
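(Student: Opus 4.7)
The plan is to exploit the explicit exponential representation of $\Gamma_i$ and $\widehat\Gamma_i$ together with the estimate on $\int_0^1 a_\delta\,dy$ provided by Lemma~\ref{Lem:Prop.Prms.1}\ref{Lem:Prop.Prms.1.Item5}.

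First I would observe that, using \eqref{Def.Ga} and \eqref{Def.Ga.2},
\begin{equation*}
\Gamma_i\at{\xi;\eps,\teps,\delta}=\int_0^\xi \gamma_i\at{z}\,
\exp\!\at{\int_z^\xi a_\delta\at{y}b_\delta\at{y}\,dy}\,
\exp\!\at{-\int_z^\xi a_\delta\at{y}\bat{\teps{}y+\eps}\,dy}\,dz,
\end{equation*}
whereas $\widehat\Gamma_i\at{\xi;\delta}$ is obtained by dropping the second exponential. Since $\eps,\teps\geq0$ and $a_\delta\geq0$, the second exponential is bounded by $1$, which immediately gives the pointwise inequality $\Gamma_i\at{\xi;\eps,\teps,\delta}\leq\widehat\Gamma_i\at{\xi;\delta}$ for all $\xi\geq0$.

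For the sharper statement on $\ccinterval{0}{1}$ I would estimate the exponent uniformly: for $0\leq{z}\leq\xi\leq1$,
\begin{equation*}
0\leq\int_z^\xi a_\delta\at{y}\bat{\teps{}y+\eps}\,dy
\leq\at{\eps+\teps}\int_0^1 a_\delta\at{y}\,dy
\leq C\,\frac{\eps+\teps}{\sqrt\delta}=\Do{1}
\end{equation*}
by Assumption \ref{Ass:Prms.0} and Lemma~\ref{Lem:Prop.Prms.1}\ref{Lem:Prop.Prms.1.Item5}. Hence $1-\exp\nat{-\Do{1}}=\Do{1}$ uniformly in $z$ and $\xi$, so writing
\begin{equation*}
\widehat\Gamma_i\at{\xi;\delta}-\Gamma_i\at{\xi;\eps,\teps,\delta}
=\int_0^\xi\gamma_i\at{z}\exp\!\at{\int_z^\xi a_\delta b_\delta\,dy}
\Bat{1-\exp\!\at{-\int_z^\xi a_\delta\bat{\teps{}y+\eps}\,dy}}\,dz
\end{equation*}
and bounding the last bracket by $\Do{1}$ yields $\abs{\Gamma_i-\widehat\Gamma_i}\leq\Do{1}\,\widehat\Gamma_i\at{\xi;\delta}$ on $\ccinterval{0}{1}$.

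There is no real obstacle here; the only thing to watch is that the $o(1)$ bound on the exponent is genuinely uniform in $(z,\xi)\in\ccinterval{0}{1}^2$, which is ensured by the fact that $\int_0^1 a_\delta$ already controls $\int_z^\xi a_\delta$ for any such pair, and by the assumption $\eps,\teps=\nDo{\sqrt\delta}$ which makes the product $(\eps+\teps)/\sqrt\delta$ tend to zero as $\delta\to0$.
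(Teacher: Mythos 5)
Your proposal is correct and follows essentially the same route as the paper: both identify the explicit exponential representation, use nonnegativity of $\eps,\teps$ and $a_\delta$ for the pointwise inequality, and control the correction exponent on $\ccinterval{0}{1}$ via $\at{\eps+\teps}\int_0^1 a_\delta\,dy\leq C\at{\eps+\teps}/\sqrt\delta=\Do{1}$ from Lemma~\ref{Lem:Prop.Prms.1}. The only cosmetic difference is that the paper bounds $\abs{e^{-x}-1}$ by $e^{x}-1$ before invoking $\Do{1}$, while you note directly that $1-e^{-\Do{1}}=\Do{1}$; these are the same estimate.
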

\begin{proof}
The first assertion follows from Definition \eqref{Def.Ga} and the
positivity of $a_\delta$. With $0\leq{z}\leq{\xi}\leq{1}$ we find
\begin{align*}
0\leq\int\limits_z^\xi{a}_\delta\at{y}\at{{\teps}y+\eps}\,dy
\leq{}\at{{\teps}+\eps}\int\limits_0^1{a}_\delta\at{y}
\leq{}C\frac{\eps+{\teps}}{\sqrt\delta},%
\end{align*}
and %
\begin{align*}
\abs{\exp\at{-\int\limits_z^\xi{a}_\delta\at{y}\at{{\teps}y+\eps}\,dy}-1}
&\leq%
\exp\at{\int\limits_z^\xi{a}_\delta\at{y}\at{{\teps}y+\eps}\,dy}-1\leq\exp\at{C\frac{\eps+\teps}{\sqrt\delta}}-1
=\Do{1}
\end{align*}
gives the second assertion.
\end{proof}
%
%
%
%
\subsection{Solving the fixed point equation for $\pair{\eps}{\teps}$}
\label{sect:FP.Prms}
%
\begin{lemma}
\label{Lem:Restr.Prms} %
For $\delta\leq1$ and all $h$ sufficiently close to
$\Phi_{LSW}*\Phi_{LSW}$ the following estimates are satisfied
\begin{align}
\label{Lem:Restr.Prms.Eqns} %
\fnorm{h}\leq2\fnorm{\Phi_{LSW}*\Phi_{LSW}},\quad\quad
\widehat{G}_i[h;\delta]=\bat{1\pm\Do{1}}\,{\cnstKi}R_\delta[h],\quad\quad
c
\leq%
{R}_\delta[h]\leq{}C.
\end{align}
\end{lemma}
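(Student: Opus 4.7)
The plan is to treat the three estimates in \eqref{Lem:Restr.Prms.Eqns} separately, as they are rather different in nature. The first, $\fnorm{h}\leq 2\fnorm{\Phi_{LSW}*\Phi_{LSW}}$, is immediate from the triangle inequality once one reads ``sufficiently close'' as $\fnorm{h-\Phi_{LSW}*\Phi_{LSW}}\leq\fnorm{\Phi_{LSW}*\Phi_{LSW}}$; note that $\Phi_{LSW}*\Phi_{LSW}\in\fspace$ by Lemma \ref{Lem:Props.H.1}.

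For the two-sided bound on $R_\delta[h]$, the upper estimate follows from $R_\delta[h]\leq\snorm{h}\int_0^1\varrho_\delta\at\xi\,d\xi\leq C\fnorm{h}$ via Lemma \ref{Lem:Props.Rho} and the norm bound just established. For the lower bound I would first show that $R_\delta[\Phi_{LSW}*\Phi_{LSW}]\geq c_0>0$ uniformly in $\delta\leq1$. Since $\Phi_{LSW}$ is continuous and strictly positive on $\ointerval{0}{1/2}$, the convolution $\Phi_{LSW}*\Phi_{LSW}$ is continuous and strictly positive on $\ointerval{0}{1}$. Picking $\si\in\ointerval{0}{1/2}$ with $\at{\Phi_{LSW}*\Phi_{LSW}}\at\xi\geq c_1>0$ on $\ccinterval{1/2+\si}{1-\si}$ and combining with the uniform lower bound $\varrho_\delta\at\xi\geq c_\si$ from Lemma \ref{Lem:Props.Rho} on the same interval, integration yields the claim. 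The uniform Lipschitz continuity of $R_\delta$ from Corollary \ref{Cor:Props.Rho} then extends this lower bound to every $h$ in a sufficiently small $\fspace$-neighborhood of $\Phi_{LSW}*\Phi_{LSW}$.

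The substantive part is the middle identity $\widehat{G}_i[h;\delta]=\bat{1\pm\Do{1}}\cnstKi R_\delta[h]$. The plan is to substitute the sharp asymptotic \eqref{Lem:Prop.GaH.2.Eqn2} from Lemma \ref{Lem:Prop.GaH.2} directly into Definition \eqref{Def.Ga.2}; the main term then equals $\cnstKi R_\delta[h]$ exactly by \eqref{Def.Rho}, and the additive error is
\begin{equation*}
\cnstKi\,\Do{1}\,\sqrt\delta\int\limits_0^1\xi\,a_\delta\at\xi\,h\at\xi\,d\xi.
\end{equation*}
The asymptotic $\int_0^1 a_\delta\at{y}\,dy=\DO{1/\sqrt\delta}$ from Lemma \ref{Lem:Prop.Prms.1} together with $\snorm{h}\leq\fnorm{h}$ bounds this error by $C\cnstKi\Do{1}\fnorm{h}$. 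Dividing by the main term and invoking $R_\delta[h]\geq c$ and $\fnorm{h}\leq C$ from the first two steps shows that the relative error is of order $\Do{1}$, as required.

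The only delicate point is the uniformity bookkeeping: the $\sqrt\delta$ appearing in the expansion of $\widehat\Gamma_i$ must cancel precisely the $1/\sqrt\delta$ growth of $\int_0^1 a_\delta$, so that the error in $\widehat G_i$ inherits the $\Do{1}$ prefactor rather than blowing up (this cancellation is automatic from the structure of \eqref{Lem:Prop.GaH.2.Eqn2} and Lemma \ref{Lem:Prop.Prms.1}), and the positivity of $R_\delta[\Phi_{LSW}*\Phi_{LSW}]$ must be extracted uniformly in $\delta\leq1$ via the $\delta$-independent constant $c_\si$ of Lemma \ref{Lem:Props.Rho} rather than only through the limit $R_0$ of Corollary \ref{Cor:Props.Rho}.
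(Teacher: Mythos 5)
Your proposal is correct and follows essentially the same route as the paper: substitute the expansion \eqref{Lem:Prop.GaH.2.Eqn2} of $\widehat\Gamma_i$ into \eqref{Def.Ga.2}, absorb the $\sqrt\delta\int_0^1 a_\delta$ product into an $\Do{1}\snorm{h}$ error, and control the relative error by the two-sided bound on $R_\delta[h]$. The only (cosmetic) difference is in the lower bound on $R_\delta[h]$: the paper bounds it directly by observing that $h\geq\tfrac12\Phi_{LSW}*\Phi_{LSW}\geq c$ pointwise on $\cinterval{5/8}{7/8}$ and then invokes the lower bound on $\varrho_\delta$ from Lemma~\ref{Lem:Props.Rho}, whereas you first establish $R_\delta[\Phi_{LSW}*\Phi_{LSW}]\geq c_0$ and then transfer this to nearby $h$ via the uniform Lipschitz continuity of Corollary~\ref{Cor:Props.Rho}; both versions give the same $\delta$-uniform constant.
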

\begin{proof}
Equation \eqref{Lem:Prop.GaH.2.Eqn2} provides
\begin{align}
\label{Lem:Props.G.Func.2.Eqn3} %
\widehat{G}_i[h;\delta]&=\int\limits_0^1\,\xi\,a_\delta\at\xi\,h\at\xi\,\widehat{\Gamma}_i\at{\xi;\delta}\,d\xi
\notag\\&=%
\cnstKi\,\int\limits_0^1\,\xi\,a_\delta\at\xi\,h\at\xi\,
\exp\at{-\int\limits_\xi^1\,a_\delta\at{y}\,b_\delta\at{y}\,dy}
\,d\xi
- \Do{1}{\sqrt{\delta}}\,\cnstKi
\int\limits_0^1\,\xi\,a_\delta\at\xi\,h\at\xi\, \,d\xi,
\notag\\&=%
\cnstKi\at{R_\delta[h]\pm\Do{1}\snorm{h}}.
\end{align}
Recall that $\Phi_{LSW}*\Phi_{LSW}$ is strictly positive on
$\ccinterval{0}{\tfrac{7}{8}}$, and suppose that
$\fnorm{h-\Phi_{LSW}*\Phi_{LSW}}$ is sufficiently small such that
$h\at{\xi}\leq2\fnorm{\Phi_{LSW}*\Phi_{LSW}}$ for all
$0\leq\xi\leq1$ and
\begin{align*}
h\at{\xi}\geq\tfrac{1}{2}\at{\Phi_{LSW}*\Phi_{LSW}}\at{\xi}
\geq{c},\qquad\tfrac{5}{8}\leq\xi\leq\tfrac{7}{8}.
\end{align*}
Thanks to Lemma \ref{Lem:Props.Rho} this estimate implies
\begin{align*}
0<c\int_{5/8}^{7/8}\varrho_\delta\at\xi\,d\xi\leq\,R_\delta[h]\leq
2\fnorm{\Phi_{LSW}*\Phi_{LSW}}R_\delta[1]<\infty.
\end{align*} %
In particular,
$R_{\delta}[h]\pm\Do{1}\snorm{h}=\at{1\pm\Do{1}}R_{\delta}[h]$, which is the third claimed estimate,
and using \eqref{Lem:Props.G.Func.2.Eqn3} we find the second inequality from \eqref{Lem:Restr.Prms.Eqns}.
\end{proof}
From now on we make the following assumption on the function $h$.
\begin{assumption}
\label{Ass:Prms} Let $\delta$ be
sufficiently small, and $\eps+\teps=\nDo{\sqrt\delta}$. Moreover, suppose that
$h$ is
sufficiently close to $\Phi_{LSW}*\Phi_{LSW}$ in the sense that all estimates from
\eqref{Lem:Restr.Prms.Eqns} as well as
\begin{align}
\label{Lem:Restr.Prms.Eqn3}%
\tnorm{h}=\Do{1}
\end{align}
are satisfied.
\end{assumption}
\begin{remark}
\label{Ass:Prms.Rem}
\begin{enumerate}
\item The condition $\tnorm{h}=\Do{1}$ arises
naturally. In fact, below we consider functions $\Phi$ with $\fnorm{\Phi-\Phi_{LSW}}=\Do{1}$, and this implies
$\tnorm{\Phi*\Phi}\leq\fnorm{\Phi*\Phi-\Phi_{LSW}*\Phi_{LSW}}=\Do{1}$.
\item
The condition $\fnorm{\Phi-\Phi_{LSW}}=\Do{1}$ depends crucially on
the parameters parameter $\beta_1$ and $\beta_2$ from
\eqref{Def:Spaces}, because for fixed $\Phi$ the quantity
$\tnorm{\Phi}$ grows as $\beta_1\to\infty$ or $\beta_2\to\infty$.
This will effect the final choice for $\delta$, see Condition
\eqref{Def:PrmMu} below.
\end{enumerate}
\end{remark}
\subsubsection{Properties of $G_{i}$,  $\widehat{G}_{i}$, and their derivatives}
%
%
%
%
%
%
In order to estimate $G_i$ we split the $\xi$-integration in \eqref{Def.Ga} as follows
\begin{align}
\notag
\begin{split}
G_{i,1}[h;\eps,{\teps},\delta]\deq\int\limits_{0}^{1}%
\xi\,a_\delta\at{\xi}\,h\at\xi\,{\Gamma}_i\at{\xi;\eps,{\teps},\delta}d\xi,
\quad%
G_{i,2}[h;\eps,{\teps},\delta]\deq\int\limits_{1}^{\infty}%
\xi\,a_\delta\at{\xi}\,h\at\xi\,{\Gamma}_i\at{\xi;\eps,{\teps},\delta}d\xi,
\end{split}
\end{align}
so that $G_i=G_{i,1}+G_{i,2}$.
\begin{lemma}
\label{Lemma:Props.G.Func} %
Assumption \ref{Ass:Prms} implies
\begin{align*}
\abs{G_i[h;\eps,{\teps},\delta]-\widehat{G}_i[h;\delta]}\leq\Do{1}\,\widehat{G}_i[h;\delta],
\end{align*}
and
\begin{align*}
\eps\,\babs{\partial_\eps{G_{i}[h;\eps,{\teps},\delta]}}
\leq%
{\Do{1}}\,\widehat{G}_i[h;\delta],\quad\quad
{\teps}\,\babs{\partial_{{\teps}}{G_{i}[h;\eps,{\teps},\delta]}}
\leq%
\Do{1}\,\widehat{G}_i[h;\delta],
\end{align*}
for both  $i=1$ and $i=2$.
\end{lemma}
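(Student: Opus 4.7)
The plan is to exploit the decomposition $G_i=G_{i,1}+G_{i,2}$ along the split $\ccinterval{0}{1}\cup\cointerval{1}{\infty}$ and to treat the two contributions with completely different tools. On $\ccinterval{0}{1}$ we have the pointwise approximation $\Gamma_i\at{\xi;\eps,\teps,\delta}\approx\widehat\Gamma_i\at{\xi;\delta}$ from Lemma \ref{Lem:Props.Ga}, while on $\cointerval{1}{\infty}$ we only have the polynomial growth bound $\widehat\Gamma_i\at{\xi;\delta}\leq{C}\cnstKi\xi^4$ from Lemma \ref{Lem:Prop.GaH.2}; the latter will be absorbed against the exponential decay of $h$ encoded in $\tnorm{h}=\Do{1}$ via the moment bound \eqref{Lem:Props.H.1.Eqn2}. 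Throughout the lower bound $\widehat{G}_i[h;\delta]\geq{c}\cnstKi$ from Lemma \ref{Lem:Restr.Prms} converts estimates against $\cnstKi$ into estimates against $\widehat{G}_i[h;\delta]$.

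For the first assertion, applying the pointwise estimate of Lemma \ref{Lem:Props.Ga} under the integral defining $G_{i,1}$ immediately yields
\begin{equation*}
\abs{G_{i,1}[h;\eps,\teps,\delta]-\widehat{G}_i[h;\delta]}\leq\Do{1}\,\widehat{G}_i[h;\delta],
\end{equation*}
whereas using $\xi\,a_\delta\at\xi\leq{C}$, the polynomial bound on $\widehat\Gamma_i$, and \eqref{Lem:Props.H.1.Eqn2} gives
\begin{equation*}
G_{i,2}[h;\eps,\teps,\delta]\leq{C}\cnstKi\int\limits_1^\infty\xi^4\,h\at\xi\,d\xi\leq{C}\cnstKi\,\tnorm{h}=\Do{1}\,\cnstKi.
\end{equation*}
Combining these two estimates with the lower bound on $\widehat{G}_i$ yields the first claim.

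The derivative estimates are obtained by differentiating under the integral sign in $\Gamma_i$, which produces
\begin{equation*}
\babs{\VarDer{\Gamma_i\at{\xi;\eps,\teps,\delta}}{\eps}}\leq\Bat{\sup_{0\leq{z}\leq\xi}\int\limits_z^\xi a_\delta\at{y}\,dy}\,\Gamma_i\at{\xi;\eps,\teps,\delta},
\end{equation*}
and an analogous bound with an extra factor $y$ in the inner integrand for $\partial_\teps\Gamma_i$. For $\xi\in\ccinterval{0}{1}$ Lemma \ref{Lem:Prop.Prms.1} bounds the bracketed supremum by $C/\sqrt\delta$, and since $\eps+\teps=\nDo{\sqrt\delta}$ by Assumption \ref{Ass:Prms} this gives $\eps\abs{\partial_\eps G_{i,1}}+\teps\abs{\partial_\teps G_{i,1}}=\Do{1}\,\widehat{G}_i[h;\delta]$. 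For $\xi\in\cointerval{1}{\infty}$ the supremum is at most $C/\sqrt\delta+C\log\at{1+\xi}$ for $\partial_\eps$ and $C/\sqrt\delta+C\xi$ for $\partial_\teps$, using $a_\delta\at{y}\leq{C/y}$ and $y\,a_\delta\at{y}\leq{C}$ respectively; each such polynomial factor is harmlessly absorbed into a higher moment $\int_1^\infty\xi^n\,h\at\xi\,d\xi\leq{C_n}\,\tnorm{h}$, and combining with $\eps,\teps=\nDo{\sqrt\delta}$ and the lower bound on $\widehat{G}_i$ yields the derivative estimates as well.

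The only real obstacle is the bookkeeping: one must track the three distinct small parameters $\delta$, $\eps/\sqrt\delta$, and $\teps/\sqrt\delta$, and verify that every polynomial factor produced by $\widehat\Gamma_i\at{\xi;\delta}\leq{C}\cnstKi\xi^4$ and by the derivatives of the exponential weight is controlled by $\tnorm{h}$ via \eqref{Lem:Props.H.1.Eqn2}. No genuinely new analytic input beyond the auxiliary results of Section \ref{sect:AuxResults} is required.
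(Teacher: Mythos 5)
Your proof is correct and follows essentially the same route as the paper: the split $G_i = G_{i,1}+G_{i,2}$ at $\xi=1$, Lemma \ref{Lem:Props.Ga} for the $[0,1]$ piece, the polynomial bound on $\widehat\Gamma_i$ together with the moment estimate \eqref{Lem:Props.H.1.Eqn2} and $\tnorm{h}=\Do{1}$ for the tail, and differentiation under the integral with the bound $\int_0^1 a_\delta(y)(y+1)\,dy\leq C/\sqrt\delta$ combined with $(\eps+\teps)/\sqrt\delta=\Do{1}$ for the derivative estimates. The only cosmetic deviations (a slightly sharper $\log\xi$ bound for the $\partial_\eps$ inner integral on $\xi\geq1$ where the paper simply uses $C\xi$, and phrasing the derivative bound in terms of $\Gamma_i$ rather than $\widehat\Gamma_i$ before invoking $\Gamma_i\leq\widehat\Gamma_i$) do not change the argument.
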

\begin{proof}
Lemma \ref{Lem:Props.Ga} implies %
\begin{align}
\label{Lemma:Props.G.Func.Eqn1} %
\abs{G_{i,1}[h;\eps,{\teps},\delta]-\widehat{G}_{i}[h;\delta]}\leq\Do{1}\,\widehat{G}_{i}[h;\delta],
\end{align}
and using Lemma \ref{Lem:Prop.Prms.1} and Lemma \ref{Lem:Prop.GaH.2}
we find
\begin{align}
\label{Lemma:Props.G.Func.Eqn2} %
\begin{split}
G_{i,2}[h;\eps,{\teps},\delta]
&\leq{}%
\int\limits_{1}^{\infty}%
\xi\,a_\delta\at{\xi}\,h\at\xi\,\widehat{\Gamma}_i\at{\xi;\delta}\,d\xi
\leq%
C\,\cnstKi%
\int\limits_{1}^{\infty}%
h\at\xi\,\xi^4\,d\xi
\\&\leq%
C\,\cnstKi\tnorm{h}%
\leq%
\Do{1}\widehat{G}_i\at{h;\delta},%
\end{split}
\end{align}
where we additionally used \eqref{Lem:Props.H.1.Eqn2} and \eqref{Lem:Restr.Prms.Eqn3}.
Combining \eqref{Lemma:Props.G.Func.Eqn1} and \eqref{Lemma:Props.G.Func.Eqn2}
yields the desired estimates for $G_i$. In order to control the
derivatives we compute
\begin{align*}
\VarDer{G_{i,1}}{\eps}[h;\eps,{\teps},\delta]%
&=\int\limits_0^{1}\xi\,a_\delta\at\xi\,h\at\xi\,
\VarDer{\Gamma_i}{\eps}\at{\xi;\eps,{\teps},\delta}\,d\xi,
\\%
\VarDer{G_{i,1}}{\teps}[h;\eps,{\teps},\delta]%
&=\int\limits_0^{1}\xi\,a_\delta\at\xi\,h\at\xi\,
\VarDer{\Gamma_i}{\teps}\at{\xi;\eps,{\teps},\delta}\,d\xi,
\end{align*}
as well as similar formulas for the derivatives of $G_{i,2}$, where
\begin{align*}
\VarDer{\Gamma_i}{\eps}\at{\xi;\eps,{\teps},\delta}
&=\int\limits_0^\xi\ga_i\at{z}%
\exp\at{\int\limits_z^\xi{a}_\delta\at{y}\at{b_\delta\at{y}-{\teps}y-\eps}\,dy}%
\at{-\int\limits_z^\xi{a}_\delta\at{y}\,dy}%
\,dz, \\%
\VarDer{\Gamma_i}{\teps}\at{\xi;\eps,{\teps},\delta}
&=\int\limits_0^\xi\ga_i\at{z}%
\exp\at{\int\limits_z^\xi{a}_\delta\at{y}\at{b_\delta\at{y}-{\teps}y-\eps}\,dy}%
\at{-\int\limits_z^\xi{a}_\delta\at{y}\,y\,dy}%
\,dz.
\end{align*}
For $0\leq{z}\leq\xi\leq{1}$ we estimate
\begin{align*}
\abs{{\int\limits_z^\xi{a}_\delta\at{y}\at{y+1}\,dy}}%
&\leq\int\limits_0^{1}{a}_\delta\at{y}\at{y+1}\,dy%
\leq\frac{C}{\sqrt\delta},\quad
\end{align*}
so that
\begin{align*}
\babs{\VarDer{\Gamma_i}{\eps}\at{\xi;\eps,{\teps},\delta}}
\leq%
\frac{C}{\sqrt\delta}\,\widehat{\Gamma}_i\at{\xi;\delta} ,\quad
\babs{\VarDer{\Gamma_i}{\teps}\at{\xi;\eps,{\teps},\delta}}
\leq%
\frac{C}{\sqrt\delta}\,\widehat{\Gamma}_i\at{\xi;\delta}
\end{align*}
due to Lemma \ref{Lem:Props.Ga}. Multiplication with $\xi\,a_\delta\at\xi\,h\at\xi$ and integration over $0\leq\xi\leq{1}$ provides
\begin{align*}
\eps\babs{\VarDer{G_{i,1}}{\eps}[h;\eps,{\teps},\delta]}\leq%
\Do{1}\,\widehat{G}_i[h;\delta] ,\quad
{\teps}\babs{\VarDer{G_{i,1}}{\eps}[h;\eps,{\teps},\delta]}
\leq%
\Do{1}\,\widehat{G}_i[h;\delta],
\end{align*}
where we used $\at{\eps+\teps}/\sqrt\delta=\Do{1}$.
For
$\xi\geq{1}$ we estimate
\begin{align*}
\abs{{\int\limits_z^\xi{a}_\delta\at{y}\at{y+1}\,dy}}%
&\leq\int\limits_0^{1}{a}_\delta\at{y}\at{y+1}\,dy+
\int\limits_1^{\xi}{a}_\delta\at{y}\at{y+1}\,dy
\leq\frac{C}{\sqrt\delta}+C\xi\leq\frac{C}{\sqrt\delta}\xi,
\end{align*}
and exploiting Lemma \ref{Lem:Prop.GaH.2} and Lemma \ref{Lem:Props.Ga} we find
\begin{align*}
\abs{\partial_\eps{\Gamma}_i\at{\xi;\eps,{\teps},\delta}}
\leq%
\frac{C}{\sqrt\delta}\,\xi^5\,\cnstKi ,\quad
\abs{\partial_{{\teps}}{\Gamma}_i\at{\xi;\eps,{\teps},\delta}}
\leq%
\frac{C}{\sqrt\delta}\,\xi^5\,\cnstKi.
\end{align*}
Finally, \eqref{Lem:Props.H.1.Eqn2} combined with \eqref{Lem:Restr.Prms.Eqns} gives
\begin{align*}
\eps\abs{\partial_{\eps}{G_{i,2}[h;\eps,{\teps},\delta]}}
\leq%
\Do{1}\,\widehat{G}_i[h;\delta] ,\quad
{\teps}\abs{\partial_{{\teps}}{G_{i,2}[h;\eps,{\teps},\delta]}}
\leq%
\Do{1}\,\widehat{G}_i[h;\delta],
\end{align*}
and the proof is complete.
\end{proof}%
%
%
%
%
\subsubsection{Choosing $\eps$ and $\teps$}
\begin{lemma}
\label{Lem:FP.Prms.Sol}
The following assertions hold true under
Assumption \ref{Ass:Prms}.
\begin{enumerate}
\item
For small $\delta$ and $i=1,2$ we have
\begin{align}
\label{Lem:FP.Prms.Sol.Eqn1a}%
\babs{g_i[h;\eps,\teps,\delta]-\eps^2\widehat{G}_i[h;\delta]}
\leq\Do{1}\,\eps^2\widehat{G}_i[h;\delta]
\end{align}
as well as
\begin{align}
\label{Lem:FP.Prms.Sol.Eqn1b}%
\babs{\eps\VarDer{g_i}{\eps}[h;\eps,\teps,\delta]-2g_i[h;\eps,\teps,\delta]}
\;\;\;+\;\;\;%
\babs{{\teps}\,\VarDer{g_i}{\teps}[h;\eps,\teps,\delta]}
\;\leq\;\Do{1}\,\eps^2\widehat{G}_i[h;\delta].
\end{align}
\item
For each $\alpha>1$ there exists $\delta_\alpha>0$ such that for all
$\delta\leq\delta_\alpha$ each solution $\pair{\eps}{{\teps}}$ to
the compatibility conditions \eqref{Def.FP.Prms} must belong to
\begin{align*}
U[h;\alpha,\delta]\deq\left\{%
\pair{\eps}{\tilde{\eps}}\;:\;\tfrac{1}{\alpha}\,\eps_\app[h;\delta]\leq\eps\leq\alpha\,\eps_\app[h;\delta],\;%
\tfrac{1}{\alpha}\,\teps_\app[h;\delta]\leq{\teps}\leq\alpha\,\teps_\app[h;\delta]
\right\},%
\end{align*}
where
\begin{align*}
\eps_\app[h;\delta]\deq1/\widehat{G}_1[h;\delta],\quad\quad
{\teps}_\app[h;\delta]\deq\eps_\app[h;\delta]\,\widehat{G}_2[h;\delta]/\widehat{G}_1[h;\delta]
\end{align*}
have the same order of magnitude thanks to
\eqref{Lem:Prop.GaH.2.Eqn3}.
\item
For small $\delta$ and given $h$ (bounded by Assumption
\ref{Ass:Prms}) there exists a solution
$\pair{\eps[h;\delta]}{\teps[h;\delta]}$ to \eqref{Def.FP.Prms}.
Moreover, this solution is unique under the constraints
$\eps,\,\teps=\nDo{\sqrt\delta}$.
\end{enumerate}
\end{lemma}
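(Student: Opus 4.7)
I would handle the three parts in sequence, with part (i) supplying the quantitative ingredients for both (ii) and (iii). For part (i), observe that by definition $g_i[h;\eps,\teps,\delta]=\eps^2G_i[h;\eps,\teps,\delta]$, so \eqref{Lem:FP.Prms.Sol.Eqn1a} is immediate from the first estimate of Lemma \ref{Lemma:Props.G.Func} after multiplication by $\eps^2$. Differentiating the identity $g_i=\eps^2G_i$ gives
\begin{align*}
\eps\VarDer{g_i}{\eps}-2g_i=\eps^2\at{\eps\VarDer{G_i}{\eps}},\qquad
\teps\VarDer{g_i}{\teps}=\eps^2\at{\teps\VarDer{G_i}{\teps}},
\end{align*}
and substituting the derivative bounds of Lemma \ref{Lemma:Props.G.Func} yields \eqref{Lem:FP.Prms.Sol.Eqn1b}.

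For part (ii), the compatibility conditions $\eps=g_1$ and $\teps=g_2$ combined with \eqref{Lem:FP.Prms.Sol.Eqn1a} give the algebraic identities $\eps=\at{1+\Do{1}}\eps^2\widehat{G}_1[h;\delta]$ and $\teps=\at{1+\Do{1}}\eps^2\widehat{G}_2[h;\delta]$. Dividing the first by $\eps$ forces $\eps=\at{1+\Do{1}}\eps_\app[h;\delta]$, and substitution into the second forces $\teps=\at{1+\Do{1}}\teps_\app[h;\delta]$. Choosing $\delta_\alpha$ small enough that the residual $\Do{1}$ factor lies in $\at{\alpha^{-1},\alpha}$ places every solution of \eqref{Def.FP.Prms} inside $U[h;\alpha,\delta]$.

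For part (iii) I decouple the two equations. Fixing $\teps$ and setting $\phi(\eps;\teps)\deq g_1(\eps,\teps)/\eps^2$, part (i) yields $\phi=\at{1+\Do{1}}\widehat{G}_1$ together with $\eps\,\partial_\eps\phi=\Do{1}\widehat{G}_1$, so the map $\eps\mapsto\eps\phi(\eps;\teps)$ has derivative $\at{1+\Do{1}}\widehat{G}_1>0$ and is strictly monotone. Hence $\eps\phi=1$ admits a unique solution $\eps^{\ast}(\teps)$ close to $\eps_\app$, and the implicit function theorem together with the $\teps$-derivative bound from (i) yields $\teps\abs{d\eps^{\ast}/d\teps}\leq\Do{1}\eps_\app$. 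Substituting back, the second equation reduces to the scalar fixed point $\teps=\psi(\teps)\deq g_2\at{\eps^{\ast}(\teps),\teps}$; a chain-rule computation using the estimates of (i) produces $\psi(\teps)=\at{1+\Do{1}}\teps_\app$ and $\psi^{\prime}(\teps)=\Do{1}$, so Banach's fixed point theorem delivers a unique $\teps[h;\delta]$ close to $\teps_\app$, hence a unique pair $\pair{\eps[h;\delta]}{\teps[h;\delta]}$. Uniqueness in the larger class $\eps,\teps=\nDo{\sqrt\delta}$ follows from part (ii): every such solution already lies in $U[h;\alpha,\delta]$ for any $\alpha>1$, and therefore in the neighborhood where uniqueness is established.

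The main obstacle I anticipate is verifying $\psi^{\prime}(\teps)=\Do{1}$ with the correct uniformity. The term $\at{\partial_\eps g_2}\at{d\eps^{\ast}/d\teps}$ pairs $\partial_\eps g_2\approx 2g_2/\eps\sim\teps_\app/\eps_\app$ with $d\eps^{\ast}/d\teps\sim\Do{1}\eps_\app/\teps_\app$, and one must check that these combine to $\Do{1}$; this requires careful tracking of how the derivative estimates of (i) propagate through the implicit function theorem, together with \eqref{Lem:Prop.GaH.2.Eqn3} to control the ratio $\teps_\app/\eps_\app$.
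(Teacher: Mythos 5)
Your proposal is correct and follows essentially the same route as the paper's proof. Part (i) is indeed an immediate consequence of Lemma \ref{Lemma:Props.G.Func} together with the identity $g_i=\eps^2 G_i$; part (ii) is the same algebraic manipulation of the compatibility conditions; and part (iii) decouples the system exactly as the paper does, first solving $g_1=\eps$ for $\eps=\eps(\teps)$ by monotonicity, then showing the resulting scalar map $\teps\mapsto g_2(\eps(\teps),\teps)$ is a contraction onto the interval $[\teps_\app/\alpha,\,\alpha\teps_\app]$. The ``obstacle'' you anticipate at the end is exactly the cancellation the paper exhibits: one has $|\partial_\eps g_2|\sim g_2/\eps\sim\teps_\app/\eps_\app$ and $|\mathrm{d}\eps/\mathrm{d}\teps|=\Do{1}\eps_\app/\teps_\app$, and the comparability $c\,\eps_\app\le\teps_\app\le C\,\eps_\app$ (from \eqref{Lem:Prop.GaH.2.Eqn3} via Lemma \ref{Lem:Restr.Prms}) makes both factors and their product $O(1)\cdot\Do{1}=\Do{1}$, so there is no genuine difficulty there. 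If anything, your reformulation via $\phi(\eps;\teps)=g_1/\eps^2$ makes the monotonicity step slightly cleaner than the paper's terse ``$\partial_\eps g_1>0$'', since what one really needs is that $g_1/\eps$ is strictly increasing (equivalently $\eps\partial_\eps g_1-g_1>0$), which your computation $\partial_\eps(\eps\phi)=(1+\Do{1})\widehat G_1>0$ delivers directly.
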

\begin{proof}
Let $\delta$ be sufficiently small, and $h$ and
$\alpha>1$ be fixed. The estimates \eqref{Lem:FP.Prms.Sol.Eqn1a} and \eqref{Lem:FP.Prms.Sol.Eqn1b} are
provided by Lemma \ref{Lemma:Props.G.Func} and imply
\begin{align*}
\frac{g_1\pair{\eps}{\teps}}{\eps}=\at{1\pm\Do{1}}\frac{\eps}{\eps_\app},\quad
g_2\pair{\eps}{\teps}=\at{1\pm\Do{1}}\at{\frac{\eps}{\eps_\app}}^2\teps_\app.
\end{align*}
where $g_i\pair{\eps}{\teps}$, $\eps_\app$, and $\teps_\app$ are
shorthand for $g_i[h,\eps,\teps,\delta]$, $\eps_\app[h;\delta]$, and
$\teps_\app[h;\delta]$, respectively.  Therefore,
$\pair{g_1\pair{\eps}{\teps}}{g_2\pair{\eps}{\teps}}=\pair{\eps}{\teps}$
implies
$\eps=\at{1\pm\Do{1}}\eps_\app$, and in turn
$\teps=\at{1\pm\Do{1}}\teps_\app$, so each solution to
\eqref{Def.FP.Prms} must be an element of $U[h;\alpha,\delta]$.
\par
Now suppose $\pair{\eps}{\teps}\in{U}[h,\al,\delta]$. For
$\eps=\tfrac{1}{\alpha}\eps_\app$ and $\eps={\alpha}\eps_\app$ we have
\begin{align*}
g_1\pair{\eps}{\teps}=\at{1\pm\Do{1}}\tfrac{1}{\al}\eps<\eps\qquad\text{and}\qquad
g_1\pair{\eps}{\teps}=\at{1\pm\Do{1}}{\al}\eps>\eps,
\end{align*}
respectively, and \eqref{Lem:FP.Prms.Sol.Eqn1b} implies
$\VarDer{g_1}{\eps}>0$.
Therefore, for each $\teps$ there exists a unique solution
$\eps=\eps\at{\teps}$ to $g_1\eeq\eps$, i.e.,
\begin{align*}
g_1\pair{\eps\at\teps}{\teps}=\eps\at{\teps}=\at{1\pm\Do{1}}\eps_\app,
\end{align*} %
and differentiation with respect to $\teps$ shows
\begin{align*}
\abs{\frac{\d\eps}{\d\teps}}=\abs{\VarDer{g_1}{\teps}}/\abs{\VarDer{g_1}{\eps}-1}=
\Do{1}\frac{\eps}{\teps}=\Do{1}\frac{\eps_\app}{\teps_\app}=\Do{1}
\end{align*} %
since $c\eps_\app\leq\teps_\app\leq{C}\eps_\app$ thanks to Lemma
\ref{Lem:Restr.Prms}. Now let
$\tilde{g}_2\at{\teps}\deq{}g_2\pair{\eps\at\teps}{\teps}$, and note
that
\begin{align*}
\tilde{g}_2=\at{1\pm\Do{1}}\teps_\app
,\quad%
\abs{\frac{\tilde{g}_2}{\d\teps}} =
\abs{\VarDer{g_2}{\eps}}\abs{\frac{\d\eps}{\d\teps}}+\abs{\VarDer{g_2}{\teps}}=\Do{1}\frac{\tilde{g_2}}{\tilde\eps}=\Do{1}.
\end{align*}
Thus, for small $\delta$ the function $\tilde{g_2}$ is contractive
with
\begin{align*}
\tilde{g}_2\at{\tfrac{1}{\al}\teps_\app}>\tfrac{1}{\al}\teps_\app,\quad%
\tilde{g}_2\at{{\al}\teps_\app}<{\al}\teps_\app,
\end{align*}
and hence there exists a unique solution to
$\teps\eeq\tilde{g}_2\at\teps$. %
\end{proof}
In Section \ref{sect:FP.Phi} below we consider functions
$h$ close to $\Phi_{LSW}*\Phi_{LSW}$, and then the following
result, which follows from
Lemma \ref{Lem:Restr.Prms},
Lemma \ref{Lem:FP.Prms.Sol} and Corollary
\ref{Cor:Props.Rho}, becomes useful.
\begin{corollary}
\label{Cor:FP.Prms.Sol.2}
 Suppose that
\begin{align*}
\fnorm{h-\Phi_{LSW}*\Phi_{LSW}}=\Do{1}.
\end{align*}
Then the
solution $\pair{\eps[h;\delta]}{\teps[h;\delta]}$ from Lemma
\ref{Lem:FP.Prms.Sol}  satisfies
\begin{align*}
\eps_\app[h;\delta]=\at{1\pm\Do{1}}\epsilon_\delta,\quad\quad
\teps_\app[h;\delta]=\at{1\pm\Do{1}}\widetilde{\epsilon}_\delta,
\end{align*}
where
\begin{align}
\label{Eqn:Eps.Asymp}
\epsilon_\delta\deq1/\,{\cnstKO}R_0[\Phi_{LSW}*\Phi_{LSW}],\quad\quad
\widetilde{\epsilon}_\delta\deq\epsilon_\delta\,{\cnstKT}/\,\cnstKO.
\end{align}
In particular, the assertions from Theorem \ref{MainTheo1} and Proposition \ref{Intro.Prop1} are satisfied.
\end{corollary}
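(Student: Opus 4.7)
The plan is to unwind the definitions of $\eps_\app$ and $\teps_\app$ by combining the asymptotic identity from Lemma~\ref{Lem:Restr.Prms} with the $\delta\to 0$ convergence in Corollary~\ref{Cor:Props.Rho} and the Lipschitz continuity of $R_0$.

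First, Lemma~\ref{Lem:Restr.Prms} supplies $\widehat{G}_i[h;\delta]=\bat{1\pm\Do{1}}\cnstKi\,R_\delta[h]$ under Assumption~\ref{Ass:Prms}. I would then use the uniform Lipschitz continuity of the functionals $R_\delta$ on $\fspace$ together with $\fnorm{h-\Phi_{LSW}*\Phi_{LSW}}=\Do{1}$ to replace $R_\delta[h]$ by $R_\delta[\Phi_{LSW}*\Phi_{LSW}]$ up to a \emph{multiplicative} factor $\bat{1\pm\Do{1}}$. For this to be a multiplicative and not an additive error, one needs $R_\delta[\Phi_{LSW}*\Phi_{LSW}]$ to be bounded below by a $\delta$-independent positive constant, which follows from the strict positivity of $\Phi_{LSW}*\Phi_{LSW}$ on $\ccinterval{\tfrac{5}{8}}{\tfrac{7}{8}}$ and the lower bound on $\varrho_\delta$ from Lemma~\ref{Lem:Props.Rho}. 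Finally, Corollary~\ref{Cor:Props.Rho} gives $R_\delta[\Phi_{LSW}*\Phi_{LSW}]=\bat{1\pm\Do{1}}R_0[\Phi_{LSW}*\Phi_{LSW}]$, so that
\[
\widehat{G}_1[h;\delta]=\bat{1\pm\Do{1}}\cnstKO\,R_0[\Phi_{LSW}*\Phi_{LSW}]=\bat{1\pm\Do{1}}/\epsilon_\delta,
\]
and hence $\eps_\app[h;\delta]=1/\widehat{G}_1[h;\delta]=\bat{1\pm\Do{1}}\epsilon_\delta$. Repeating the same computation for $\widehat{G}_2$ makes the factor $R_0[\Phi_{LSW}*\Phi_{LSW}]$ cancel in the ratio $\widehat{G}_2/\widehat{G}_1$, so $\teps_\app[h;\delta]=\eps_\app[h;\delta]\,\widehat{G}_2/\widehat{G}_1=\bat{1\pm\Do{1}}\epsilon_\delta\,\cnstKT/\cnstKO=\bat{1\pm\Do{1}}\widetilde{\epsilon}_\delta$.

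The ``in particular'' claims are then largely bookkeeping. For Theorem~\ref{MainTheo1}, Lemma~\ref{Lem:FP.Prms.Sol} already yields a unique small solution $\pair{\eps[h;\delta]}{\teps[h;\delta]}\in U[h;\alpha,\delta]$ for any fixed $\alpha>1$; choosing $\alpha$ slightly larger than $1$ and absorbing the $\bat{1\pm\Do{1}}$ errors of the previous paragraph into the factors $2$ and $\tfrac{1}{2}$ places the solution in $U_\delta$. For Proposition~\ref{Intro.Prop1}, the derivative estimates are exactly \eqref{Lem:FP.Prms.Sol.Eqn1b} together with $g_i=\bat{1\pm\Do{1}}\eps^2\widehat{G}_i$, while the bounds on $g_1-\epsilon_\delta$ and $g_2-\widetilde{\epsilon}_\delta$ follow at the fixed point from the compatibility conditions $g_1=\eps=\bat{1\pm\Do{1}}\epsilon_\delta$ and $g_2=\teps=\bat{1\pm\Do{1}}\widetilde{\epsilon}_\delta$.

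The only mildly subtle point, already encapsulated in Lemma~\ref{Lem:Restr.Prms}, is that the exponentially large constants $\cnstKi\sim\exp\nat{c/\sqrt\delta}$ cancel in the ratios that define $\eps_\app$ and $\teps_\app/\eps_\app$, so the $\Do{1}$ errors—which are uniform in $h$ under Assumption~\ref{Ass:Prms}—are never amplified by the exponential factor. No new estimate is needed beyond tracking where this cancellation happens and invoking the preceding results.
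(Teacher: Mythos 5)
Your proof is correct and follows the paper's intended route: the paper itself does not write out a proof but states that the corollary ``follows from Lemma \ref{Lem:Restr.Prms}, Lemma \ref{Lem:FP.Prms.Sol} and Corollary \ref{Cor:Props.Rho},'' and you use exactly these ingredients -- the multiplicative identity $\widehat{G}_i[h;\delta]=(1\pm\Do{1})\cnstKi R_\delta[h]$, the lower bound $R_\delta[h]\geq c$ to turn additive Lipschitz errors in $R_\delta$ into multiplicative ones, and the convergence $R_\delta\to R_0$ -- together with the observation that the exponentially large $\cnstKi$ cancel in the ratios defining $\eps_\app$ and $\teps_\app/\eps_\app$. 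The only minor point worth noting is that for Proposition \ref{Intro.Prop1} the estimates $\abs{g_i-\epsilon_\delta}=\Do{\epsilon_\delta}$ etc.\ should, as you implicitly do, be read at (or near) the solution of the compatibility conditions, i.e.\ for $\pair{\eps}{\teps}\in U_\delta$, since Assumption \ref{Ass.Prms.1} alone only pins $\eps$ to $\nDo{\sqrt\delta}$, which is far coarser than $\epsilon_\delta\sim\exp\nat{-c/\sqrt\delta}$.
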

%
\subsubsection{Continuity of $\eps$ and $\tilde{\eps}$.}
%
%
%
\begin{lemma}
\label{Lem:FP.Prms.Cont}
The solution from Lemma \ref{Lem:FP.Prms.Sol} depends
Lipschitz-continuously on $h$. More precisely, for arbitrary $h_1$,
$h_2$ that fulfil Assumption \ref{Ass:Prms} we have
\begin{align*}
\Babs{\eps[h_2;\delta]-\eps[h_1;\delta]} +
\Babs{{\teps}[h_2;\delta]-\teps[h_1;\delta]}
&\leq%
C\at{\eps[h_2;\delta]+\eps[h_1;\delta]}\,\fnorm{h_2-h_1}.
\end{align*}
\end{lemma}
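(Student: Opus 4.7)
The plan is to exploit two facts: the operator $g_i[h;\eps,\teps,\delta]=\eps^2 G_i[h;\eps,\teps,\delta]$ is linear in $h$, and the derivative estimates from Lemma~\ref{Lem:FP.Prms.Sol} control its variation in $(\eps,\teps)$. Writing $\eps_j\deq\eps[h_j;\delta]$, $\teps_j\deq\teps[h_j;\delta]$, $D_\eps\deq\eps_2-\eps_1$, $D_\teps\deq\teps_2-\teps_1$, I would use the compatibility conditions satisfied by both solutions to split
\begin{align*}
D_\eps
= g_1[h_2-h_1;\eps_2,\teps_2,\delta]
+\bigl(g_1[h_1;\eps_2,\teps_2,\delta]-g_1[h_1;\eps_1,\teps_1,\delta]\bigr),
\end{align*}
and analogously for $D_\teps$, thereby reducing the proof to controlling an ``$h$-part'' and an ``$(\eps,\teps)$-part'' separately.

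For the $h$-part I would rerun the estimates of Lemma~\ref{Lemma:Props.G.Func} with $h_2-h_1$ in place of $h$. Splitting the $\xi$-integration at $\xi=1$ and using $\Gamma_i\leq\widehat\Gamma_i$ on $[0,1]$ together with $\widehat\Gamma_i\at{\xi;\delta}\leq C\cnstKi\xi^4$ on $[1,\infty)$ and \eqref{Lem:Props.H.1.Eqn2} gives $\babs{G_i[h_2-h_1;\eps_2,\teps_2,\delta]}\leq C\cnstKi\fnorm{h_2-h_1}$. Multiplying by $\eps_2^2$ and invoking the scalings $\eps_2^2\cnstKO\sim\eps_2$ and $\eps_2^2\cnstKT\sim\teps_2$ supplied by Corollary~\ref{Cor:FP.Prms.Sol.2} and \eqref{Lem:Prop.GaH.2.Eqn3}, I obtain
\begin{align*}
\babs{g_1[h_2-h_1;\eps_2,\teps_2,\delta]}\leq C\eps_2\fnorm{h_2-h_1},\qquad
\babs{g_2[h_2-h_1;\eps_2,\teps_2,\delta]}\leq C\teps_2\fnorm{h_2-h_1}.
\end{align*}

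For the $(\eps,\teps)$-part I would apply the mean-value theorem along the segment from $(\eps_1,\teps_1)$ to $(\eps_2,\teps_2)$, both of which lie in $U[h_1;\alpha,\delta]$ by Lemma~\ref{Lem:FP.Prms.Sol}. At every point on this segment, Lemma~\ref{Lem:FP.Prms.Sol.Eqn1b} together with $g_i\sim\eps_*^2\widehat G_i$ and Corollary~\ref{Cor:FP.Prms.Sol.2} yields $\partial_\eps g_1=2+\Do{1}$, $\partial_\teps g_1=\Do{1}$, $\partial_\eps g_2=(2+\Do{1})\teps_*/\eps_*$ with $\teps_*/\eps_*\in[c,C]$, and $\partial_\teps g_2=\Do{1}$. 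Inserting these expansions into the splitting produces the coupled linear system
\begin{align*}
(1+\Do{1})D_\eps+\Do{1}D_\teps &= -g_1[h_2-h_1;\eps_2,\teps_2,\delta],\\
(2+\Do{1})(\teps_*/\eps_*)\,D_\eps-(1-\Do{1})D_\teps &= -g_2[h_2-h_1;\eps_2,\teps_2,\delta],
\end{align*}
whose coefficient matrix has determinant tending to $-1$ as $\delta\to 0$. Inverting it gives $\abs{D_\eps}+\abs{D_\teps}\leq C(\eps_2+\teps_2)\fnorm{h_2-h_1}$, which implies the claim thanks to $\teps_j\sim\eps_j$.

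The main obstacle is to resist the naive contraction-mapping impulse: since $\partial_\eps g_1\approx 2>1$, the $(\eps,\teps)$-part cannot be absorbed into the left-hand side directly. The correct move is to subtract $\eps$ from $g_1$ first, replacing the factor $2$ by the well-conditioned quantity $\partial_\eps g_1-1\approx 1$; this is the very mechanism that underlies the existence argument in Lemma~\ref{Lem:FP.Prms.Sol}. The residual two-variable coupling is then routine thanks to the a priori comparability $\teps_*\sim\eps_*$ inherited from \eqref{Lem:Prop.GaH.2.Eqn3}.
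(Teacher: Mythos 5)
Your proof is correct and follows essentially the same strategy as the paper's: split the difference into an $h$-part (using linearity of $g_i$ in $h$) and an $(\eps,\teps)$-part, bound the latter via the derivative estimates from Lemma~\ref{Lem:FP.Prms.Sol}, and invert the resulting $2\times 2$ linear system whose determinant is bounded away from zero. The only cosmetic difference is that the paper parametrizes the homotopy $h(\tau)=\tau h_2+(1-\tau)h_1$, differentiates the implicit relations in $\tau$, and integrates, whereas you apply the mean value theorem directly; the decomposition, the key estimates, and the inversion of the near-diagonal matrix are identical.
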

\begin{proof}
We fix $\delta$, abbreviate $\eps_i=\eps[h_i;\delta]$ and
${\teps}_i={\teps}[h_i;\delta]$, and for arbitrary $\tau\in[0,1]$
and $\pair{\eps}{\teps}\in{U}_\delta$ we write $h\left(\tau\right)
=\tau{h}_{2}+\at{1-\tau}h_1$,
as well as
\begin{align*}
\bar{g}_{i}\triple{\eps}{\teps}{\tau}=g_i[h\at{\tau};\eps,\teps,\delta],\quad
\eps\at\tau=\eps[h\at\tau;\delta],\quad
\teps\at\tau=\teps[h\at\tau;\delta],
\end{align*}
so that
\begin{align}
\label{Lem:FP.Prms.Cont.Eqn1}
\eps\at\tau=\bar{g}_{1}\triple{\eps\at\tau}{\teps\at\tau}{\tau},\quad
\teps\at\tau=\bar{g}_{2}\triple{\eps\at\tau}{\teps\at\tau}{\tau}
\end{align}
holds by construction. For fixed $\pair{\eps}{\teps}$ we estimate $\VarDer{\bar{g}_i}{\tau}$ as follows
\begin{align*}
\abs{\VarDer{\bar{g}_i}{\tau}\triple{\eps}{\teps}{\tau}}&\leq{}%
g_i[\abs{h_2-h_1};\eps,\teps,\delta]=
\eps^2\int\limits_0^\infty\xi\,a_\delta\at\xi\abs{h_2\at\xi-h_1\at\xi}
\Gamma_i\at{\xi;\eps,\teps,\delta}
\\&\leq\eps^2\snorm{h_2-h_1}\widehat{G}_i\at{1;\delta}
+ \eps^2\tnorm{h_2-h_1}\cnstKi,
\\&\leq{C}\eps^2\cnstKi\fnorm{h_2-h_1},
\end{align*}
compare \eqref{Lemma:Props.G.Func.Eqn1} and \eqref{Lemma:Props.G.Func.Eqn2}, and
Lemma \ref{Lem:Restr.Prms} combined with Lemma \ref{Lem:FP.Prms.Sol} provides%
\begin{align*}
\abs{\VarDer{g_i}{\tau}\triple{\eps\at\tau}{\teps\at\tau}{\tau}}
\leq{C}\eps\at{\tau}\fnorm{h_2-h_1}=C\at{\eps_2+\eps_1}\fnorm{h_2-h_1}.
\end{align*}
Differentiating \eqref{Lem:FP.Prms.Cont.Eqn1} with respect to $\tau$ yields
\begin{align*}
\eps^{\,\prime}=\VarDer{\bar{g}_1}{\eps}\eps^{\,\prime}+
\VarDer{\bar{g}_1}{\teps}{\teps}^{\,\prime}+ \VarDer{\bar{g}_1}{\tau},
\quad%
{\teps^{\,\prime}}=\VarDer{\bar{g}_2}{\eps}\eps^{\,\prime}+
\VarDer{\bar{g}_2}{\teps}{\teps}^{\,\prime}+ \VarDer{\bar{g}_2}{\tau},
\end{align*}
where $^{\prime}$ denotes $\frac{\d}{\d\tau}$.
Moreover, Lemma \ref{Lem:FP.Prms.Sol} combined with $\bar{g}_1=\eps$,
$\bar{g}_2=\teps$, and $\eps/\teps=\DO{1}$ provides
\begin{align*}
\VarDer{\bar{g}_1}{\eps}=\at{2+\Do{1}}\quad
\VarDer{\bar{g}_2}{\eps}=\DO{1},\quad \VarDer{\bar{g}_1}{\teps}=\Do{1},\quad
\VarDer{\bar{g}_2}{\teps}=\Do{1},
\end{align*}
and we conclude that
\begin{align*}
\left(%
\begin{array}{cc}
  -1+\Do{1} & \Do{1}  \\
  \DO{1} & 1+\Do{1}  \\
\end{array}%
\right)
\left(%
\begin{array}{c}
\eps^{\,\prime}\\{\teps}^{\,\prime}
\end{array}%
\right)
\sim{1}%
\left(%
\begin{array}{c}
\VarDer{\bar{g}_1}{\tau}\\\VarDer{\bar{g}_2}{\tau}
\end{array}%
\right).
\end{align*}
Finally, we find
\begin{align*}
\abs{\eps_{2}-\eps_{1}}+\abs{\teps_{2}-\teps_{1}}
=\int\limits_{0}^{1}\abs{\eps^{\,\prime}+{\teps}^{\,\prime}}d\tau
\leq{C}\at{\eps_1+\eps_2}\,\fnorm{h_2-h_1},
\end{align*}
which is the desired result.
\end{proof}
%
%
\subsection{Solving the fixed point equation for $\Phi$}\label{sect:FP.Phi}
For each $h\in\fspace$ and arbitrary parameters
$\pair{\eps}{\teps}$ we define the function
\begin{align*}
J[h;\eps,\teps,\delta]\at{z}\deq\psi\at{z;\eps,\teps,\delta}\,
\int\limits_z^\infty\frac{\xi{a}_\delta\at{\xi}h\at\xi}{\psi\at{\xi;\eps,\teps,\delta}}\,d\xi,
\end{align*}
which is related to the fixed problem for $\Phi$ via
\begin{align*}
\bar{I}_\delta[\Phi]=\eps[\Phi*\Phi;\delta]\,J\big[\Phi*\Phi,\,\eps[\Phi*\Phi;\delta],\,\teps[\Phi*\Phi;\delta],\,\delta\big],
\end{align*}
compare \eqref{Def:MainFPOp}. Notice that the exponental decay of $h$ implies that the function $J[h;\eps,\teps,\delta]$ is well defined and has finite moments, i.e.,
\begin{align*}
\int\limits_0^\infty{}J[h;\eps,\teps,\delta]\at{z}\,dz<\infty,\qquad\int\limits_0^\infty{}zJ[h;\eps,\teps,\delta]\at{z}\,dz<\infty.
\end{align*}
Moreover, below we show, cf. Corollary \ref{Cor:PropsJres}, that the operator $J[\cdot;\eps,\teps;\delta]$ maps the space $\fspace$ into itself.
%
\subsubsection{Approximation of the operator $J$}
%
In this section we show that the operator $J$ can be approximated by
\begin{align*}
J_\app[h;\eps,\teps,\delta]\at{z}\deq
\chi_{\ccinterval{0}{1}}\at{z}\,\psi\at{z;\eps,\teps,\delta}\,
\int\limits_0^\infty\xi\,J[h;\eps,\teps,\delta]\at\xi\,d\xi,%
\end{align*}
that means all contributions coming from
\begin{align*}
J_\res[h;\eps,\teps,\delta]\deq{}J[h;\eps,\teps,\delta]-J_\app[h;\eps,\teps,\delta]
\end{align*}
can be neglected. To prove this we split the operator $J$ into three
parts $J=J_1+J_2+J_3$ with
\begin{align}
\label{eqn:OpJ.Split}
\begin{split}
J_1[h;\eps,\teps,\delta]\at{z}&\deq
+\chi_{\ccinterval{0}{1}}\at{z}\,\psi\at{z;\eps,\teps,\delta}\,
\int\limits_0^{\infty}\frac{\xi{a}_\delta\at{\xi}h\at\xi}{\psi\at{\xi;\eps,\teps,\delta}}\,d\xi,
\\%
J_2[h;\eps,\teps,\delta]\at{z}&\deq -\chi_{\ccinterval{0}{1}}\at{z}
\,\displaystyle \psi\at{z;\eps,\teps,\delta}\,
\int\limits_0^{z}\frac{\xi{a}_\delta\at{\xi}h\at\xi}{\psi\at{\xi;\eps,\teps,\delta}}\,d\xi,
\\%
J_3[h;\eps,\teps,\delta]\at{z}&\deq
+\chi_{\cointerval{1}{\infty}}\at{z}\,\psi\at{z;\eps,\teps,\delta}\,
\int\limits_z^{\infty}\frac{\xi{a}_\delta\at{\xi}h\at\xi}{\psi\at{\xi;\eps,\teps,\delta}}\,d\xi.
\end{split}
\end{align}
Notice that $J_\app[h;\eps,\teps,\delta]$,
$J_1[h;\eps,\teps,\delta]$, and $J_2[h;\eps,\teps,\delta]$ are
supported in $\ccinterval{0}{1}$, whereas the support of
$J_3[h;\eps,\teps,\delta]$ equals $\cointerval{1}{\infty}$.
Moreover, the next result shows that $J_1$ does not contribute to
the residual operator $J_\res$.
\begin{remark}
\label{Rem:PropsJres} For all $h\in\fspace$ and all parameters
$\triple{\eps}{\teps}{\delta}$ we have
\begin{align*}
J_\res[h;\eps,\teps,\delta]\at{z}= J_2[h;\eps,\teps,\delta]\at{z}-
\psi\at{z;\eps,\teps,\delta}\at{\int\limits_0^1\xi\,J_2[h;\eps,\teps,\delta]\at{\xi}\,d\xi
+\int\limits_1^\infty\xi\,J_3[h;\eps,\teps,\delta]\at{\xi}\,d\xi},
\end{align*}
for $0\leq{z}\leq{1}$, as well as
\begin{align*}
J_\res[h;\eps,\teps,\delta]\at{z}=J_3[h;\eps,\teps,\delta]\at{z},
\end{align*}
for $z\geq{1}$.
\end{remark}
\begin{proof} The second assertion is a direct consequence
of \eqref{eqn:OpJ.Split}. Now let $0\leq{z}\leq{1}$.
Due to the normalization condition
$\int_0^1\xi\,\psi\at{\xi;\eps,\teps,\delta}\,d\xi=1$ we have
\begin{align*}
\int\limits_0^{\infty}z\,J_1[h;\eps,\teps,\delta]\at{z}\,dz =
\int\limits_0^{\infty}\frac{\xi{a}_\delta\at{\xi}h\at\xi}{\psi\at{\xi;\eps,\teps,\delta}}\,d\xi
\end{align*}
and this implies
\begin{align*}
\psi\at{z;\eps,\teps,\delta}
\int\limits_0^1\xi\,J_1[h;\eps,\teps,\delta]\at{\xi}\,d\xi
=J_1[h;\eps,\teps,\delta]\at{z}.
\end{align*}
Moreover, by definition we have
\begin{align*}
J_\res[h;\eps,\teps,\delta]\at{z}=
J_1[h;\eps,\teps,\delta]\at{z}+J_2[h;\eps,\teps,\delta]\at{z}-
\psi\at{z;\eps,\teps,\delta}\sum\limits_{i=1}^3\int\limits_0^\infty\xi\,J_i[h;\eps,\teps,\delta]\at{\xi}\,d\xi,
\end{align*}
and the combination of both results yields the first assertion.
\end{proof}
In the next step we estimate the operators $J_2$ and $J_3$ as well
as their derivatives with respect to $\pair{\eps}{\teps}$.
\begin{lemma}
\label{Lem:PropsJres} For all $h\in\fspace$ and all parameters
$\triple{\eps}{\teps}{\delta}$ that satisfy Assumption
\ref{Ass:Prms.0} we find
\begin{align*}
\babs{J_{2}[h;\eps,\teps,\delta]\at{z}}+
\babs{\VarDer{}{\eps}J_{2}[h;\eps,\teps,\delta]\at{z}}+
\babs{\VarDer{}{\teps}J_{2}[h;\eps,\teps,\delta]\at{z}}
\leq%
\frac{C}{\delta}\snorm{h}
\end{align*}
for all $0\leq{z}\leq{1}$, as well as
\begin{align*}
\babs{J_{3}[h;\eps,\teps,\delta]\at{z}}+
\babs{\VarDer{}{\eps}J_{3}[h;\eps,\teps,\delta]\at{z}}+
\babs{\VarDer{}{\teps}J_{3}[h;\eps,\teps,\delta]\at{z}}
\leq%
{C}\tnorm{h}\frac{\exp\at{-\beta_1{z}}}{z^{\beta_2}}
\end{align*}
for all ${z}\geq{1}$.
\end{lemma}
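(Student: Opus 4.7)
The two bounds concern different ranges of $z$ and require different treatments, but both rest on the explicit formula
\begin{equation*}
\frac{\psi\at{z;\eps,\teps,\delta}}{\psi\at{\xi;\eps,\teps,\delta}}=\exp\at{\int\limits_z^\xi a_\delta\at{y}\bat{b_\delta\at{y}-\teps{}y-\eps}\,dy},
\end{equation*}
obtained by integrating the ODE \eqref{eq:Def.HomSol}. Differentiating this identity in $\eps$ and $\teps$ produces the multiplicative factors $-\int_z^\xi a_\delta\at{y}\,dy$ and $-\int_z^\xi a_\delta\at{y}\,y\,dy$, respectively. All six quantities in the lemma are therefore integrals of $\xi\,a_\delta\at{\xi}\,h\at{\xi}$ weighted by the $\psi$-ratio and, in the case of derivatives, by one of these two additional integrals.

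For $J_2$ (where $0\leq{z}\leq{1}$ and $0\leq\xi\leq{z}$), my first task is to establish the uniform bound $\psi\at{z}/\psi\at{\xi}\leq{C}$. This follows from the fact that $\widehat{\psi}_\delta$ is essentially decreasing past its narrow initial peak (cf.\ Figure \ref{Fig2}), while Lemma \ref{Lem:Prop.HomSol} reduces the case of general small $\eps,\teps$ to $\widehat{\psi}_\delta$. Combining this with Lemma \ref{Lem:Prop.Prms.1} gives $\babs{J_2\at{z}}\leq{C}\snorm{h}\int_0^1\xi\,a_\delta\at\xi\,d\xi\leq{C}\snorm{h}/\sqrt\delta$. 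For the $\eps$- and $\teps$-derivatives, the extra multiplicative factors $\int_\xi^z a_\delta\at{y}\,dy$ and $\int_\xi^z a_\delta\at{y}\,y\,dy$ are both bounded by $C/\sqrt\delta$, which, together with the previous estimate, yields the stated uniform bound $C\snorm{h}/\delta$.

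For $J_3$ (where $z\geq{1}$ and $\xi\geq{z}$), I will exploit the asymptotics $y\,a_\delta\at{y}\to{1}$ and $b_\delta\at{y}\to{2}$ from Lemma \ref{Lem:Prop.Prms.1} to conclude $\int_z^\xi a_\delta\at{y}b_\delta\at{y}\,dy=2\log\at{\xi/z}+\DO{1}$ uniformly for $1\leq{z}\leq\xi$. Since the contributions of $-\teps{y}$ and $-\eps$ in the exponent are nonpositive, this yields $\psi\at{z}/\psi\at{\xi}\leq{C}\at{\xi/z}^2$. Combined with $\xi\,a_\delta\at{\xi}\leq{C}$ for $\xi\geq{1}$ and the moment bound \eqref{Lem:Props.H.1.Eqn3}, I then obtain
\begin{equation*}
\babs{J_3\at{z}}\leq\frac{C}{z^2}\int\limits_z^\infty\xi^2\abs{h\at\xi}\,d\xi\leq{C}\tnorm{h}\frac{\exp\at{-\beta_1{z}}}{z^{\beta_2}}.
\end{equation*}
For the $\eps$-derivative the extra factor $\int_z^\xi a_\delta\,dy\leq{C}\log\at{\xi/z}$ is absorbed by slightly enlarging the exponent in $\at{\xi/z}^2$, while for the $\teps$-derivative the factor $\int_z^\xi a_\delta\,y\,dy\leq{C}\at{\xi-z}$ is handled by the second estimate in \eqref{Lem:Props.H.1.Eqn3}.

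The most delicate step is the uniform bound $\psi\at{z}/\psi\at{\xi}\leq{C}$ in the $J_2$ regime, since $\psi$ undergoes an abrupt transition of size $\exp\nat{-c/\sqrt\delta}$ near $z=\tfrac{1}{2}$; controlling this requires Lemma \ref{Lem:Prop.HomSol} to transfer the estimate from $\widehat{\psi}_\delta$ to the $\eps,\teps$-dependent $\psi$. The pessimistic $1/\delta$ factor then originates from the fact that differentiating the ratio brings in a separate $1/\sqrt\delta$-divergent integral of $a_\delta$, and these two $1/\sqrt\delta$ losses cannot be merged.
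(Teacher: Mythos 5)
Your proposal is correct and matches the paper's proof in all essential respects: both work directly from the explicit exponential representation of $\psi(z)/\psi(\xi)$, bound the ratio by a $\delta$-independent constant in the $J_2$ regime (you via the near-monotonicity of $\widehat\psi_\delta$ plus Lemma \ref{Lem:Prop.HomSol}, the paper equivalently via $-\int_\xi^z a_\delta\min\{0,b_\delta\}\,dy\leq C$), use $\int_0^1\xi a_\delta\,d\xi\leq C/\sqrt\delta$ and the extra $\int_\xi^z a_\delta(1+y)\,dy\leq C/\sqrt\delta$ from differentiation to reach the $C/\delta$ bound, and for $J_3$ use the $y\to\infty$ asymptotics of $a_\delta,b_\delta$ to get a polynomial bound on $\psi(z)/\psi(\xi)$ that plugs into \eqref{Lem:Props.H.1.Eqn3}. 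The only cosmetic difference is that you use a $(\xi/z)^2$ bound (the paper uses $(\xi/z)^3$ for simplicity) and absorb the $\eps$-derivative's $\log(\xi/z)$ factor into the polynomial exponent rather than bounding the combined $\int_z^\xi a_\delta(y)(y+1)\,dy$ by $C(\xi-z)$ as the paper does; either variant closes the same way via \eqref{Lem:Props.H.1.Eqn3}.
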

\begin{proof}
Let $0\leq{z}\leq{1}$. The definition of $J_2$ provides
\begin{align*}
J_{2}[h;\eps,\teps,\delta]\at{z}=-
\int\limits_0^z\xi{a}_\delta\at{\xi}h\at\xi \exp\at{\int\limits_z^
\xi{a}_\delta\at{y}\at{b_\delta\at{y}-\eps-\teps{y}}\,dy}\,d\xi, %
\end{align*}
and we estimate
\begin{align*}
\babs{J_{2}[h;\eps,\teps,\delta]\at{z}}&\leq%
\snorm{h}\exp\at{C\frac{\eps+\teps}{\sqrt\delta}}
\int\limits_0^z\xi{a}_\delta\at{\xi}
\exp\at{-\int\limits_\xi^z{a}_\delta\at{y}
\min\{0,b_\delta\at{y}\}\,dy}\,d\xi %
\\&\leq%
\snorm{h}C\int\limits_0^z\xi{a}_\delta\at{\xi}
\exp\at{-\int\limits_0^1{a}_\delta\at{y}
\min\{0,b_\delta\at{y}\}\,dy}\,d\xi %
\\&\leq%
\snorm{h}C\int\limits_0^1\xi{a}_\delta\at{\xi}
\,d\xi
\leq%
\frac{C}{\sqrt\delta}\snorm{h}.
\end{align*}
Moreover,
\begin{align*}
\babs{\VarDer{}{\eps}J_{2}[h;\eps,\teps,\delta]\at{z}}&\leq%
\abs{ \int\limits_0^z\xi{a}_\delta\at{\xi}h\at\xi
\exp\at{\int\limits_z^\xi{a}_\delta\at{y}\at{b_\delta\at{y}-\eps-\teps{y}}\,dy}
\at{\int\limits_z^\xi{a}_\delta\at{y}\,dy}\,d\xi}&
\\&
\leq%
\frac{1}{\sqrt\delta}J_{2}\big[\abs{h};\eps,\teps,\delta\big]\at{z}
\leq%
\frac{C}{\delta}\snorm{h},
\end{align*}%
and the estimate for
$\VarDer{}{\teps}J_{2}[h;\eps,\teps,\delta]\at{z}$ is entirely
similar. Now let $z\geq{1}$. Then,
\begin{align*}
\babs{J_{3}[h;\eps,\teps,\delta]\at{z}}
&\leq%
\int\limits_z^\infty\xi{a}_\delta\at{\xi}
\abs{h\at\xi}\exp\at{\int\limits_z^\xi{a}_\delta\at{y}
\at{b_\delta\at{y}-\eps-\teps{y}}\,dy}\,d\xi %
\\&\leq%
C\int\limits_z^\infty
\abs{h\at\xi}\exp\at{\int\limits_z^\xi{a}_\delta
\at{y}\,b_\delta\at{y}\,dy}\,d\xi %
\\&\leq%
C\int\limits_z^\infty
\abs{h\at\xi}\exp\at{C+3\ln\xi-3\ln{z}}\,d\xi %
=\frac{C}{z^3}\int\limits_z^\infty
\xi^3\abs{h\at\xi}\,d\xi, %
\end{align*}
as well as
\begin{align*}
\babs{\VarDer{}{\teps}J_{3}[h;\eps,\teps,\delta]\at{z}}&+
\babs{\VarDer{}{\teps}J_{3}[h;\eps,\teps,\delta]\at{z}}\leq
\\&\leq%
\int\limits_z^\infty\xi{a}_\delta\at{\xi}
\abs{h\at\xi}\exp\at{\int\limits_z^\xi{a}_\delta
\at{y}\at{b_\delta\at{y}-\eps-\teps{y}}\,dy}
\at{\int\limits_z^\xi{a}_\delta\at{y}\at{y+1}\,dy}\,d\xi %
\\&\leq%
C\int\limits_z^\infty
\abs{h\at\xi}\at{\xi-z}\exp\at{C+3\ln\xi-3\ln{z}}\,d\xi %
=\frac{C}{z^3}\int\limits_z^\infty
\xi^3\at{\xi-z}\abs{h\at\xi}\,d\xi.
\end{align*}
Finally, using \eqref{Lem:Props.H.1.Eqn3} completes the proof.
\end{proof}
As a consequence of Remark \ref{Rem:PropsJres} and Lemma
\ref{Lem:PropsJres} we obtain estimates for the residual operator. In particular, $h\in\fspace$ implies $J_\res[h;\eps,\teps,\delta]\in\fspace$, and hence $J_\app[h;\eps,\teps,\delta]\in\fspace$.
\begin{corollary}
\label{Cor:PropsJres} The assumptions from Lemma \ref{Lem:PropsJres}
imply
\begin{align*}
J_\res[h;\eps,\teps,\delta]\in\fspace,\quad
\VarDer{}{\eps}J_\res[h;\eps,\teps,\delta]\in\fspace,\quad
\VarDer{}{\teps}J_\res[h;\eps,\teps,\delta]\in\fspace
\end{align*}
with
\begin{align*}
\snorm{J_\res[h;\eps,\teps,\delta]}\leq{}\frac{C}{\delta}\snorm{h}+C\tnorm{h},\quad\quad
\tnorm{J_\res[h;\eps,\teps,\delta]}\leq{C}\tnorm{h}
\end{align*}
as well as
\begin{align*}
\fnorm{\VarDer{}{\eps}{J_\res[h;\eps,\teps,\delta]}}+
\fnorm{\VarDer{}{\teps}{J_\res[h;\eps,\teps,\delta]}}\leq\frac{C}{\delta}\fnorm{h}.
\end{align*}
In particular, for fixed $h$ we have
\begin{align*}
\fnorm{J_\res[h;\eps_2,\teps_2,\delta]-J_\res[h;\eps_1,\teps_1,\delta]}\leq\frac{C}{\delta}\fnorm{h}
\at{\abs{\eps_2-\eps_1}+\abs{\teps_2-\teps_1}}.
\end{align*}
\end{corollary}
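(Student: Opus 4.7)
The plan is to exploit the explicit decomposition of $J_\res$ furnished by Remark \ref{Rem:PropsJres}, which reduces everything to pointwise bounds already available through Lemma \ref{Lem:PropsJres}. Concretely, on $\ccinterval{0}{1}$ the residual operator equals $J_2$ minus $\psi$ times two explicit integrals, while on $\cointerval{1}{\infty}$ it coincides with $J_3$. Each assertion of the corollary therefore reduces to (i) plugging in the estimates of Lemma \ref{Lem:PropsJres}, and (ii) controlling the prefactor $\psi\at{z;\eps,\teps,\delta}$ and its derivatives in $\pair{\eps}{\teps}$ on $\ccinterval{0}{1}$.

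First I would establish the two norm bounds. For $z\geq1$ the formula $J_\res=J_3$ together with the second estimate of Lemma \ref{Lem:PropsJres} immediately yields $\tnorm{J_\res}\leq{C}\tnorm{h}$. For $0\leq{z}\leq{1}$, I bound $\abs{J_2\at{z}}\leq\tfrac{C}{\delta}\snorm{h}$, and separately estimate the two integrals as $\int_0^1\xi\abs{J_2\at{\xi}}\,d\xi\leq\tfrac{C}{\delta}\snorm{h}$ and $\int_1^\infty\xi\abs{J_3\at{\xi}}\,d\xi\leq{C}\tnorm{h}$, the latter using the exponential decay of $J_3$ together with $\beta_2>1$. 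Since the normalization \eqref{eq:Mass.Norm} together with Lemma \ref{Lem:Prop.HomSol} makes $\psi\at{\cdot;\eps,\teps,\delta}$ uniformly bounded on $\ccinterval{0}{1}$ for parameters as in Assumption \ref{Ass:Prms.0}, combining these estimates gives $\snorm{J_\res}\leq\tfrac{C}{\delta}\snorm{h}+C\tnorm{h}$, and in particular $J_\res[h;\eps,\teps,\delta]\in\fspace$.

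Next I would control the derivatives $\VarDer{}{\eps}J_\res$ and $\VarDer{}{\teps}J_\res$. Differentiating under the integral in Remark \ref{Rem:PropsJres}, I obtain three types of contributions: (a) the derivatives of $J_2$ and $J_3$ themselves, already bounded pointwise by Lemma \ref{Lem:PropsJres}; (b) the derivatives of $\psi\at{z;\eps,\teps,\delta}$, which by \eqref{eq:Def.HomSol} satisfy $\abs{\VarDer{\psi}{\eps}}+\abs{\VarDer{\psi}{\teps}}\leq\tfrac{C}{\sqrt\delta}\psi$ uniformly on $\ccinterval{0}{1}$, because $\int_0^1a_\delta\at{y}\at{1+y}\,dy=\DO{1/\sqrt\delta}$ by Lemma \ref{Lem:Prop.Prms.1}; and (c) the derivatives of the integrals $\int_0^1\xi{J_2}\,d\xi$ and $\int_1^\infty\xi{J_3}\,d\xi$, which inherit the bounds of Lemma \ref{Lem:PropsJres}. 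Summing these contributions yields $\snorm{\VarDer{}{\eps}J_\res}+\snorm{\VarDer{}{\teps}J_\res}\leq\tfrac{C}{\delta}\fnorm{h}$ on $\ccinterval{0}{1}$, and on $\cointerval{1}{\infty}$ Lemma \ref{Lem:PropsJres} directly gives the required exponentially decaying bound. This proves $\VarDer{}{\eps}J_\res,\VarDer{}{\teps}J_\res\in\fspace$ with $\fnorm{\VarDer{}{\eps}J_\res}+\fnorm{\VarDer{}{\teps}J_\res}\leq\tfrac{C}{\delta}\fnorm{h}$.

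Finally, the Lipschitz estimate in $\pair{\eps}{\teps}$ follows from the fundamental theorem of calculus applied along the straight-line path in parameter space connecting $\pair{\eps_1}{\teps_1}$ and $\pair{\eps_2}{\teps_2}$, using the derivative bound just obtained. The main technical obstacle is the coexistence of three different scales of smallness: the $\tfrac{1}{\sqrt\delta}$ growth of $\int_0^1a_\delta$, the $\tfrac{1}{\delta}$ growth in the derivative estimates for $J_2$, and the need to keep these uniformly controlled by $\fnorm{h}$ without accidentally picking up a worse power of $\tfrac{1}{\delta}$; this is the reason for carefully splitting $J_\res$ on the two intervals $\ccinterval{0}{1}$ and $\cointerval{1}{\infty}$ and using the bound on $J_3$ that depends only on $\tnorm{h}$ rather than $\snorm{h}/\delta$.
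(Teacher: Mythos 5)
Your overall approach is the right one and matches the spirit of the paper's (one-line) proof: apply Remark \ref{Rem:PropsJres} and Lemma \ref{Lem:PropsJres}, control $\psi$ on $\ccinterval{0}{1}$ via the normalization and Lemma \ref{Lem:Prop.HomSol}, and use the fundamental theorem of calculus for the Lipschitz estimate. The bounds $\snorm{J_\res}\leq\tfrac{C}{\delta}\snorm{h}+C\tnorm{h}$ and $\tnorm{J_\res}\leq C\tnorm{h}$ are established correctly, as is the auxiliary estimate $\abs{\VarDer{\psi}{\eps}}+\abs{\VarDer{\psi}{\teps}}\leq\tfrac{C}{\sqrt\delta}\,\psi$ on $\ccinterval{0}{1}$.

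There is, however, a quantitative gap in the derivative bound. Your contribution (b) is
\begin{align*}
\VarDer{\psi}{\eps}(z)\at{\int\limits_0^1\xi\,J_2\,d\xi+\int\limits_1^\infty\xi\,J_3\,d\xi},
\end{align*}
and if you estimate it with the bound you actually quote, namely $\abs{J_2(z)}\leq\tfrac{C}{\delta}\snorm{h}$, you obtain $\tfrac{C}{\sqrt\delta}\cdot\tfrac{C}{\delta}\snorm{h}=\tfrac{C}{\delta^{3/2}}\snorm{h}$, which overshoots the claimed $\tfrac{C}{\delta}\fnorm{h}$ by a factor of $\delta^{-1/2}$. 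To recover the stated estimate you need the sharper bound $\abs{J_2(z)}\leq\tfrac{C}{\sqrt\delta}\snorm{h}$. This sharper bound \emph{is} established inside the proof of Lemma \ref{Lem:PropsJres} (it is exactly the first chain of inequalities there, before the additional $\tfrac{1}{\sqrt\delta}$ appears in the estimates for $\VarDer{}{\eps}J_2$ and $\VarDer{}{\teps}J_2$), but it is not recorded in the statement of that lemma, which only gives the combined $\tfrac{C}{\delta}\snorm{h}$ bound on $\abs{J_2}+\abs{\VarDer{}{\eps}J_2}+\abs{\VarDer{}{\teps}J_2}$. So either cite the finer estimate from the proof explicitly, or note that Lemma \ref{Lem:PropsJres} should really be read as giving $\abs{J_2}\leq\tfrac{C}{\sqrt\delta}\snorm{h}$ and $\abs{\VarDer{}{\eps}J_2}+\abs{\VarDer{}{\teps}J_2}\leq\tfrac{C}{\delta}\snorm{h}$ separately; with that refinement contribution (b) becomes $\tfrac{C}{\sqrt\delta}\bat{\tfrac{C}{\sqrt\delta}\snorm{h}+C\tnorm{h}}\leq\tfrac{C}{\delta}\fnorm{h}$ and everything closes.

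One smaller remark: the bound on $\VarDer{\psi}{\eps}$ does not follow from the ODE \eqref{eq:Def.HomSol} alone, since the ODE only fixes $\psi$ up to a multiplicative constant; you also need to use the normalization \eqref{eq:Mass.Norm} to determine how that constant varies with $\eps$. The estimate you state is still correct, because the resulting correction is again controlled by $\int_0^1 a_\delta(y)(1+y)\,dy=\DO{1/\sqrt\delta}$, but the reasoning should invoke the normalization explicitly.
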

\begin{proof}
All assertions are direct consequences of Remark \ref{Rem:PropsJres}
and Lemma \ref{Lem:PropsJres}.
\end{proof}
%
\subsubsection{Setting for the fixed point problem}
%
%
Here we introduce suitable subsets of the function space $\fspace$
that allow to apply the contraction principle for the operator
$\bar{I}_\delta$. For this reason we introduce the functions
\begin{align*}
\widehat{\Phi}_\delta\at{z}\deq\chi_{\ccinterval{0}{1}}\at{z}\,\psi\at{z;0,0,\delta},
\end{align*}
which satisfy $\fnorm{\widehat{\Phi}_\delta-\Phi_{LSW}}\to0$ as
$\delta\to0$, see \eqref{eq:Def.HomSol} and \eqref{eq:Mass.Norm}.
\begin{definition}
Let $\mu_\delta$ be a number with
\begin{align}
\mu_\delta=\Do{1},\qquad\frac{1}{\delta\,\cnstKO}=\Do{\mu_\delta^2},\qquad
\fnorm{\widehat{\Phi}_\delta-\Phi_{LSW}}=\Do{\mu_\delta^2},
\label{Def:PrmMu}
\end{align}
and define the sets
\begin{align*}
\calY_\delta=\{\Phi\in\fspace\;:\;\fnorm{\Phi-\widehat{\Phi}_\delta}\leq\mu_\delta^2\},\quad
\calZ_\delta=\{h\in\fspace\;:\;\fnorm{h-\widehat{\Phi}_\delta*\widehat{\Phi}_\delta}\leq\mu_\delta\}.
\end{align*}
\end{definition}
\begin{lemma}
\label{Lem:Props.Spaces}%
For all sufficiently small $\delta$ the following
assertions are satisfied.
\begin{enumerate}
\item $\Phi_{LSW}\in\calY_\delta$,
\item $\Phi\in\calY_\delta$ implies $\Phi*\Phi\in\calZ_\delta$,
\item each $h\in\calZ_\delta$ satisfies Assumption \ref{Ass:Prms}, hence there exist the solutions
$\eps[h;\delta]$ and $\teps[h;\delta]$ from Lemma
\ref{Lem:FP.Prms.Sol}.
\end{enumerate}
\end{lemma}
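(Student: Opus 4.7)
The proof is essentially an exercise in chaining triangle inequalities with the bilinear continuity of the convolution from Lemma~\ref{Lem:Props.H.1} and the estimates of Lemma~\ref{Lem:Restr.Prms} and Lemma~\ref{Lem:FP.Prms.Sol}, with everything tuned by the defining properties of $\mu_\delta$ in \eqref{Def:PrmMu}. My plan is to treat the three items in order, using throughout that $\fnorm{\widehat\Phi_\delta-\Phi_{LSW}}=\Do{\mu_\delta^2}$ and $\mu_\delta=\Do{1}$.

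For item~(1), the inclusion $\Phi_{LSW}\in\calY_\delta$ is immediate from \eqref{Def:PrmMu}: for sufficiently small $\delta$ we have $\fnorm{\widehat\Phi_\delta-\Phi_{LSW}}\leq\mu_\delta^2$, and this is exactly the definition of $\calY_\delta$. For item~(2), fix $\Phi\in\calY_\delta$ and note that $\fnorm{\Phi}\leq\fnorm{\widehat\Phi_\delta}+\mu_\delta^2\leq C$, and similarly $\fnorm{\widehat\Phi_\delta}\leq C$, so Lemma~\ref{Lem:Props.H.1} gives
\begin{equation*}
\fnorm{\Phi*\Phi-\widehat\Phi_\delta*\widehat\Phi_\delta}\leq C\bat{\fnorm{\Phi}+\fnorm{\widehat\Phi_\delta}}\fnorm{\Phi-\widehat\Phi_\delta}\leq C\mu_\delta^2.
\end{equation*}
Since $\mu_\delta=\Do{1}$, for small $\delta$ this is bounded by $\mu_\delta$, which gives $\Phi*\Phi\in\calZ_\delta$.

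For item~(3), let $h\in\calZ_\delta$. I would first apply the triangle inequality together with Lemma~\ref{Lem:Props.H.1} (and the bound $\fnorm{\widehat\Phi_\delta}+\fnorm{\Phi_{LSW}}\leq C$) to get
\begin{equation*}
\fnorm{h-\Phi_{LSW}*\Phi_{LSW}}\leq\fnorm{h-\widehat\Phi_\delta*\widehat\Phi_\delta}+C\fnorm{\widehat\Phi_\delta-\Phi_{LSW}}\leq\mu_\delta+\Do{\mu_\delta^2}=\Do{1}.
\end{equation*}
This verifies the smallness hypothesis of Lemma~\ref{Lem:Restr.Prms}, yielding the three estimates in \eqref{Lem:Restr.Prms.Eqns}. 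For the remaining condition \eqref{Lem:Restr.Prms.Eqn3}, I exploit that $\Phi_{LSW}$ is supported in $\cinterval{0}{\tfrac{1}{2}}$ so $\Phi_{LSW}*\Phi_{LSW}$ is supported in $\cinterval{0}{1}$ and hence $\tnorm{\Phi_{LSW}*\Phi_{LSW}}=0$; then
\begin{equation*}
\tnorm{h}\leq\tnorm{h-\widehat\Phi_\delta*\widehat\Phi_\delta}+\tnorm{\widehat\Phi_\delta*\widehat\Phi_\delta-\Phi_{LSW}*\Phi_{LSW}}\leq\mu_\delta+\Do{\mu_\delta^2}=\Do{1},
\end{equation*}
where the second term is controlled by $\fnorm{\cdot}$ via Lemma~\ref{Lem:Props.H.1}. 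Thus all the $h$--parts of Assumption~\ref{Ass:Prms} hold, and Lemma~\ref{Lem:FP.Prms.Sol} then produces the unique solution $\pair{\eps[h;\delta]}{\teps[h;\delta]}$.

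There is no real obstacle here: every step is a bookkeeping argument matching the conditions in \eqref{Def:PrmMu} to the bounds one needs. The only subtle point, which is why the scale $\mu_\delta^2$ is imposed on $\calY_\delta$ while $\mu_\delta$ appears on $\calZ_\delta$, is that squaring through the convolution loses a factor of $\mu_\delta$; this is exactly absorbed by the slack $\mu_\delta^2\leq\mu_\delta$ for small $\delta$. The other conditions in \eqref{Def:PrmMu}, namely $\mu_\delta=\Do{1}$ and $1/(\delta\cnstKO)=\Do{\mu_\delta^2}$, are not needed in this lemma but will become essential for the contraction estimate of $\bar I_\delta$.
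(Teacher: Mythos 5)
Your proof is correct and follows essentially the same route as the paper's, chaining the bilinear continuity of $*$ from Lemma~\ref{Lem:Props.H.1} with triangle inequalities tuned to the defining properties of $\mu_\delta$ in \eqref{Def:PrmMu}. You make one point explicit that the paper leaves implicit, namely that $\tnorm{\Phi_{LSW}*\Phi_{LSW}}=0$ because $\Phi_{LSW}$ is supported in $\cinterval{0}{\tfrac{1}{2}}$, which is in fact needed for the paper's own concluding line $\tnorm{h}=\tnorm{\Phi_{LSW}*\Phi_{LSW}}+\Do{\mu_\delta}=\Do{1}$.
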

\begin{proof}
The first assertion holds by construction. For $\Phi\in\calY_\delta$
we have
\begin{align*}
\fnorm{\Phi*\Phi-\widehat{\Phi}_\delta*\widehat{\Phi}_\delta}
&=%
\fnorm{2\Phi*\nat{\Phi-\widehat{\Phi}_\delta} + \nat{\Phi-\widehat{\Phi}_\delta}*\nat{\Phi-\widehat{\Phi}_\delta}}
\\&\leq%
2\fnorm{\Phi*\nat{\Phi-\widehat{\Phi}_\delta}}+
\fnorm{\nat{\Phi-\widehat{\Phi}_\delta}*\nat{\Phi-\widehat{\Phi}_\delta}}
\leq{}C\fnorm{\Phi-\widehat{\Phi}_\delta}=\Do{\mu_\delta}.
\end{align*}%
This implies
\begin{math}
\fnorm{\widehat{\Phi}_\delta*\widehat{\Phi}_\delta-\Phi_{LSW}*\Phi_{LSW}}
\leq{C}%
\fnorm{\widehat{\Phi}_\delta-\Phi_{LSW}}\leq{C}{\mu_\delta}^2=\Do{\mu_\delta}
\end{math} %
, and for all $h\in\calZ_\delta$ we find
\begin{align*}
\fnorm{h-\Phi_{LSW}*\Phi_{LSW}}\leq\fnorm{h-\widehat{\Phi}_\delta*\widehat{\Phi}_\delta}+
C\fnorm{\widehat{\Phi}_\delta-\Phi_{LSW}}=\Do{\mu_\delta}=\Do{1}.
\end{align*}
Therefore,
\begin{align*}
\tnorm{h}=\tnorm{\Phi_{LSW}*\Phi_{LSW}}+\Do{\mu_\delta}=\Do{1},
\end{align*}
and the proof is complete.
\end{proof}
%
%
%
\subsubsection{Contraction principle for $\Phi$ }
%
Recall that the solution $\pair{\eps}{\teps}[h;\delta]$ from Lemma
\ref{Lem:FP.Prms.Sol} satisfies
\begin{align*}
\frac{1}{\eps[h;\delta]}=
\int\limits_0^\infty\xi\,J[h;\eps[h;\delta],\teps[h;\delta],\delta].
\end{align*}
Therefore we define $I[h;\delta]=I_\app[h;\delta]+I_\res[h;\delta]$
with
\begin{align*}
I_\app[h;\delta]\at{z}
\deq
\eps[h;\delta]\,J_\app[h;\eps[h;\delta],\teps[h;\delta],\delta],\quad\quad
I_\res[h;\delta]\at{z}
\deq
\eps[h;\delta]\,J_\res[h;\eps[h;\delta],\teps[h;\delta],\delta],\quad\quad
\end{align*}
and this implies $\bar{I}_\delta[\Phi]=I[\Phi*\Phi;\delta]$, with
$\bar{I}_\delta$ as in \eqref{Def:MainFPOp}, as well as
\begin{align}
\notag
 I_\app[h;\delta]\at{z}=\chi_{\ccinterval{0}{1}}\at{z}\,\psi
\at{z;\eps[h;\delta],\teps[h;\delta],\delta}.
\end{align}
In particular, is close to $\widehat{\Phi}_\delta$ with error controlled by $\eps$ and $\teps$, provided that  $\delta$ is small and
$h$ close to $\Phi_{LSW}*\Phi_{LSW}$.
\begin{lemma}
\label{Lem:OpI}
For sufficiently small $\delta$ the operator $I$ maps $\calZ_\delta$
into $\calY_\delta$, and is Lipschitz continuous with arbitrary
small constant. More precisely,
\begin{align*}
\fnorm{I[h_2;\delta]-I[h_1;\delta]}\leq\Do{1}\, \fnorm{h_2-h_1}
\end{align*}
for all $h_1,h_2\in\calZ_\delta$.
\end{lemma}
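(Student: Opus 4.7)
The plan hinges on the decomposition $I[h;\delta]=I_\app[h;\delta]+I_\res[h;\delta]$ together with the identity
\begin{align*}
I_\app[h;\delta]\at{z}=\chi_{\ccinterval{0}{1}}\at{z}\,\psi\at{z;\eps[h;\delta],\teps[h;\delta],\delta},
\end{align*}
which follows from $\eps[h;\delta]\int_0^\infty\xi\,J[h;\eps[h;\delta],\teps[h;\delta],\delta]\at{\xi}\,d\xi=1$, a direct consequence of the compatibility condition $\eps\eeq\eps^2G_1$. This reduces the analysis to two ingredients: controlling the dependence of $\psi\at{\cdot;\eps,\teps,\delta}$ on $\pair{\eps}{\teps}$ via Lemma \ref{Lem:Prop.HomSol}, and controlling $J_\res$ via Corollary \ref{Cor:PropsJres}.

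For the inclusion $I[\calZ_\delta]\subseteq\calY_\delta$, fix $h\in\calZ_\delta$. By Lemma \ref{Lem:Props.Spaces} the function $h$ satisfies Assumption \ref{Ass:Prms}, so Corollary \ref{Cor:FP.Prms.Sol.2} provides $\eps[h;\delta]=\DO{\epsilon_\delta}$, $\teps[h;\delta]=\DO{\widetilde{\epsilon}_\delta}$, with both parameters of order $\exp\nat{-c/\sqrt\delta}$. Writing
\begin{align*}
I[h;\delta]-\widehat{\Phi}_\delta=\bat{I_\app[h;\delta]-\widehat{\Phi}_\delta}+I_\res[h;\delta]
\end{align*}
and applying Lemma \ref{Lem:Prop.HomSol} with $\pair{\eps_1}{\teps_1}=\pair{0}{0}$, $\pair{\eps_2}{\teps_2}=\pair{\eps[h;\delta]}{\teps[h;\delta]}$, I obtain $\fnorm{I_\app[h;\delta]-\widehat{\Phi}_\delta}\leq{C}\bat{\eps[h;\delta]+\teps[h;\delta]}/\sqrt\delta\cdot\fnorm{\widehat{\Phi}_\delta}$, which is exponentially small in $1/\sqrt\delta$ and in particular $\Do{\mu_\delta^2}$. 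Corollary \ref{Cor:PropsJres} yields $\fnorm{I_\res[h;\delta]}\leq{C}\eps[h;\delta]/\delta\cdot\fnorm{h}=\DO{\epsilon_\delta/\delta}$, and since $\epsilon_\delta\leq{C}/\cnstKO$ by \eqref{Eqn:Eps.Asymp} while $1/\nat{\delta\,\cnstKO}=\Do{\mu_\delta^2}$ by \eqref{Def:PrmMu}, the residual piece is also $\Do{\mu_\delta^2}$. Summing, $\fnorm{I[h;\delta]-\widehat{\Phi}_\delta}\leq\mu_\delta^2$ for sufficiently small $\delta$.

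For the Lipschitz estimate, fix $h_1,h_2\in\calZ_\delta$, set $\eps_i=\eps[h_i;\delta]$, $\teps_i=\teps[h_i;\delta]$, and invoke Lemma \ref{Lem:FP.Prms.Cont} for $\abs{\eps_2-\eps_1}+\abs{\teps_2-\teps_1}\leq{C}\epsilon_\delta\fnorm{h_2-h_1}$. The $I_\app$ difference reduces to $\chi_{\ccinterval{0}{1}}\bat{\psi\at{\cdot;\eps_2,\teps_2,\delta}-\psi\at{\cdot;\eps_1,\teps_1,\delta}}$, and Lemma \ref{Lem:Prop.HomSol} combined with the uniform boundedness of $\psi\at{\cdot;\eps_1,\teps_1,\delta}$ on $\ccinterval{0}{1}$ bounds its $\fnorm{\cdot}$-norm by $C\epsilon_\delta/\sqrt\delta\cdot\fnorm{h_2-h_1}=\Do{1}\fnorm{h_2-h_1}$. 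For the residual part I telescope
\begin{align*}
I_\res[h_2;\delta]-I_\res[h_1;\delta]&=\at{\eps_2-\eps_1}J_\res[h_2;\eps_2,\teps_2,\delta]\\&\quad+\eps_1\bat{J_\res[h_2;\eps_2,\teps_2,\delta]-J_\res[h_1;\eps_2,\teps_2,\delta]}\\&\quad+\eps_1\bat{J_\res[h_1;\eps_2,\teps_2,\delta]-J_\res[h_1;\eps_1,\teps_1,\delta]},
\end{align*}
and estimate the three pieces using the norm bounds, the linearity of $J_\res$ in $h$, and the $\pair{\eps}{\teps}$-Lipschitz estimate from Corollary \ref{Cor:PropsJres}. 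Each piece acquires a factor $\epsilon_\delta/\delta$ (or better), which is $\Do{1}$, so $\fnorm{I_\res[h_2;\delta]-I_\res[h_1;\delta]}=\Do{1}\fnorm{h_2-h_1}$.

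The main technical obstacle is the careful trade-off between exponentially small quantities such as $\eps[h;\delta]\sim\exp\nat{-c/\sqrt\delta}$ and the negative powers of $\delta$ appearing in Lemma \ref{Lem:Prop.HomSol} (factor $1/\sqrt\delta$ in the exponent) and Corollary \ref{Cor:PropsJres} (factor $1/\delta$ in the norm). The three conditions in \eqref{Def:PrmMu} defining $\mu_\delta$ have been chosen precisely so that this balance simultaneously forces every error contribution below $\mu_\delta^2$, as required for the inclusion, and below $\Do{1}$, as required for the Lipschitz constant; verifying that the conditions on $\mu_\delta$ suffice for both purposes at once is the heart of the argument.
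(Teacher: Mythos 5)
Your proof is correct and follows essentially the same route as the paper: decompose $I=I_\app+I_\res$, control $I_\app$ via the ratio bounds for $\psi$ from Lemma \ref{Lem:Prop.HomSol}, control $I_\res$ via the norm and Lipschitz estimates of Corollary \ref{Cor:PropsJres}, and feed in the $h$-dependence of $\pair{\eps}{\teps}$ from Lemma \ref{Lem:FP.Prms.Cont}, with the conditions \eqref{Def:PrmMu} on $\mu_\delta$ doing exactly the balancing you describe. Your explicit three-term telescoping of $I_\res[h_2;\delta]-I_\res[h_1;\delta]$ is simply a spelled-out version of the paper's terse appeal to the same two results.
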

\begin{proof}
For each $h\in\calZ_\delta$ Lemma \ref{Lem:FP.Prms.Cont} provides
\begin{align*}
\fnorm{I_\res[h;\delta]}\leq{C}\frac{\eps[h;\delta]}{\delta}\fnorm{h}\leq
\frac{C}{\delta\,\cnstKO}\fnorm{h},
\end{align*}
where we used Lemma \ref{Lem:Restr.Prms} and Lemma
\ref{Lem:FP.Prms.Sol}. Moreover, \eqref{Def:PrmMu} implies
\begin{align*}
\fnorm{I_\res[h;\delta]}\leq\Do{\mu_\delta^2}\fnorm{h}=\Do{\mu_\delta^2}\fnorm{\widehat{\Phi}_\delta*\widehat{\Phi}_\delta}=\Do{\mu_\delta^2}.
\end{align*}
From Lemma \ref{Lem:Prop.HomSol} we derive
\begin{align*}
\widetilde{C}^{-1}\,\widehat{\Phi}_\delta\at{z}
\leq{}\psi\at{z;\eps[h;\delta],\teps[h;\delta],\delta}
\leq\widetilde{C}\,\widehat{\Phi}_\delta\at{z},
\end{align*}
for all $0\leq{z}\leq{1}$ with
\begin{align*}
\widetilde{C}[h;\delta]={\exp\at{C\frac{\eps[h;\delta]+\teps[h;\delta]}{\sqrt\delta}}}=\exp\at{\Do{\mu_\delta^2}}.
\end{align*}
We conclude
\begin{align*}
\babs{I_\app[h;\delta]\at{z}-\widehat{\Phi}_\delta\at{z}}=
\at{\widetilde{C}[h;\delta]-1}\widehat{\Phi}_\delta\at{z}=\Do{\mu_\delta^2},
\end{align*}
and find
$\fnorm{I[h;\delta]-\widehat{\Phi}_\delta}=\Do{\mu_\delta^2}$,
which implies $I[h;\delta]\in\calY_\delta$ for small $\delta$. The
Lipschitz continuity of $I_\res$ follows from Lemma
\ref{Lem:FP.Prms.Cont} and Corollary \ref{Cor:PropsJres}, and the
Lipschitz continuity of $I_\app$ is a consequence of Lemma
\ref{Lem:Prop.HomSol}. Moreover, using \eqref{Def:PrmMu} and the
same estimates as above we find that the Lipschitz constants are of
order $\Do{\mu_\delta^2}$.
\end{proof}
Now we can prove Theorem \ref{MainTheo2} and Proposition \ref{Intro.Prop2} from \S \ref{sec:Problem}.
\begin{corollary} The operator $\bar{I}_\delta$ is a contraction of $\calY_\delta$, and thus
there exists a unique fixed point in $\calY_\delta$. Moreover, this
fixed point is nonnegative since the cone of nonnegative function is
invariant under the action of $\bar{I}_\delta$.
\end{corollary}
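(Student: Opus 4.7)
The plan is to apply the Banach contraction mapping theorem to $\bar{I}_\delta$ on the closed ball $\calY_\delta \subset \fspace$. All the technical work has been carried out in the preceding lemmas, so the argument reduces to assembling them.

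First I would verify the self-mapping property. Given $\Phi \in \calY_\delta$, Lemma \ref{Lem:Props.Spaces} gives $\Phi*\Phi \in \calZ_\delta$, and then Lemma \ref{Lem:OpI} yields $I[\Phi*\Phi;\delta] \in \calY_\delta$. Since $\bar{I}_\delta[\Phi]=I[\Phi*\Phi;\delta]$ by \eqref{Def:MainFPOp}, we obtain $\bar{I}_\delta : \calY_\delta \to \calY_\delta$ for every sufficiently small $\delta$.

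Next I would establish the contraction property by chaining the two available Lipschitz bounds. For $\Phi_1,\Phi_2\in\calY_\delta$,
$$\fnorm{\bar{I}_\delta[\Phi_2]-\bar{I}_\delta[\Phi_1]} \leq \Do{1}\,\fnorm{\Phi_2*\Phi_2-\Phi_1*\Phi_1} \leq \Do{1}\,C\at{\fnorm{\Phi_1}+\fnorm{\Phi_2}}\,\fnorm{\Phi_2-\Phi_1},$$
where the first step uses the small Lipschitz constant from Lemma \ref{Lem:OpI} and the second uses the bilinear estimate from Lemma \ref{Lem:Props.H.1}. Since $\Phi_i\in\calY_\delta$ implies $\fnorm{\Phi_i}\leq\fnorm{\widehat{\Phi}_\delta}+\mu_\delta^2\leq C$, the resulting Lipschitz constant is $\Do{1}$ and is strictly less than $1$ for $\delta$ small enough. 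Banach's theorem then produces a unique fixed point in $\calY_\delta$.

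Finally, nonnegativity follows because the nonnegative cone is invariant under $\bar{I}_\delta$: convolution preserves nonnegativity, and the integral representation of $J[h;\eps,\teps,\delta]$ together with the positivity of $\psi$, $a_\delta$, and of the parameter $\eps[h;\delta]$ from Lemma \ref{Lem:FP.Prms.Sol} shows that $h\geq 0$ implies $I[h;\delta]\geq 0$. Starting the Banach iteration from the nonnegative seed $\widehat{\Phi}_\delta\in\calY_\delta$ then yields a sequence of nonnegative iterates converging to the fixed point. No real obstacle remains at this final step; the only subtlety, namely that the self-mapping property and the small Lipschitz constant must hold simultaneously, has already been absorbed into the threefold smallness condition \eqref{Def:PrmMu} on $\mu_\delta$, which is precisely what balances the approximation error $I_\res$ against the radius $\mu_\delta^2$ of $\calY_\delta$.
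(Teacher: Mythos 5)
Your argument is correct and follows the paper's own (very terse) proof: both deduce the self-mapping and contraction properties from Lemma \ref{Lem:Props.Spaces} and Lemma \ref{Lem:OpI}, with the bilinear convolution estimate of Lemma \ref{Lem:Props.H.1} supplying the step from $\fnorm{\Phi_2*\Phi_2-\Phi_1*\Phi_1}$ to $\fnorm{\Phi_2-\Phi_1}$. Your nonnegativity argument via invariance of the nonnegative cone and iteration from a nonnegative seed is exactly the mechanism the paper's statement alludes to, so there is no genuine difference in approach.
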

\begin{proof}
By construction, we have $\bar{I}_\delta[\Phi]=I[\Phi*\Phi;\delta]$
and all assertions follow from Lemma \ref{Lem:Props.Spaces} and
Lemma \ref{Lem:OpI}.
\end{proof}
Finally, we discuss the influence of the parameters
$\pair{\beta_1}{\beta_2}$ that control the decay behavior of the
solution, see \eqref{Def:Spaces}.
\begin{remark}
\label{Rem.Prms.Beta} Consider another pair of parameters
$\npair{\tilde{\beta}_1}{\tilde{\beta}_2}$ with
$\tilde{\beta}_1>\beta_1$ and $\tilde{\beta}_2>\beta_2$, and denote
by $\widetilde{\calY}_\delta$ the corresponding set from Lemma
\ref{Lem:Props.Spaces}. Our previous results imply, compare Remark
\ref{Rem:Props.H.2} and Remark \ref{Ass:Prms.Rem}, that
$\widetilde{\calY}_\delta\subset{\calY}_\delta$ for all small
$\delta$, and this yields the following two assertions. $\at{i}$ For
small but fixed $\delta$ the solution that is found with the
parameters $\npair{\tilde{\beta}_1}{\tilde{\beta}_2}$ equals the
solution for $\pair{\beta_1}{\beta_2}$. $\at{ii}$ The smaller
$\delta$ is the larger we can choose the parameters
$\pair{\beta_1}{\beta_2}$, i.e., the faster the solution decays.
\end{remark}%
\textbf{Acknowledgements.} MH and BN gratefully acknowledge support
through the DFG Research Center {\sc Matheon } and the DFG Research
group {\it Analysis and Stochastics in complex physical systems}.
JJLV was supported through the Max Planck Institute for Mathematics
in the Sciences, the Alexander-von-Humboldt foundation, and DGES
Grant MTM2007-61755.

\begin{thebibliography}{10}


\bibitem{CP1}
J.~Carr and O.~Penrose.
\newblock {Asymptotic behaviour in a simplified Lifshitz--Slyozov equation}.
\newblock {\em Physica D}, 124:166--176, 1998.

\bibitem{FaNe06a}
Y.~Farjoun and J.~Neu.
\newblock {An asymptotic solution of aggregation dynamics}.
\newblock {Progress in industrial mathematics at ECMI 2006}.
\newblock {\em Mathematics in Industry}.
\newblock Vol. 12, 368-375, 2008.

\bibitem{GMS}
B.~Giron, B.~Meerson, and P.~V. Sasorov.
\newblock {Weak selection and stability of localized distributions in Ostwald
  ripening}.
\newblock {\em Phys. Rev. E}, 58:4213--6, 1998.

\bibitem{HLN1}
M.~Herrmann, P.~Lauren\c{c}ot, and B.~Niethammer.
\newblock {Self-similar solutions with fat tails to a coagulation equation with nonlocal drift}.
\newblock in preparation.

\bibitem{Laur3}
Philippe Lauren{\c{c}}ot.
\newblock The {L}ifshitz-{S}lyozov equation with encounters.
\newblock {\em Math. Models Methods Appl. Sci.}, 11(4):731--748, 2001.

\bibitem{LS1}
I.~M. Lifshitz and V.~V. Slyozov.
\newblock The kinetics of precipitation from supersaturated solid solutions.
\newblock {\em J. Phys. Chem. Solids}, 19:35--50, 1961.

\bibitem{Ma1}
M.~Marder.
\newblock { Correlations and Ostwald ripening}.
\newblock {\em Phys. Rev. A}, 36:858--874, 1987.

\bibitem{Me1}
B.~Meerson.
\newblock {Fluctuations provide strong selection in Ostwald ripening}.
\newblock {\em Phys. Rev. E}, 60, 3:3072--5, 1999.

\bibitem{MP1}
G.~Menon and R.~L. Pego.
\newblock {Approach to self-similarity in Smoluchowski's coagulation equations
  }.
\newblock {\em Comm. Pure Appl. Math.}, 57:1197--1232, 2004.

\bibitem{Ni1}
B.~Niethammer.
\newblock { Derivation of the LSW theory for Ostwald ripening by homogenization
  methods}.
\newblock {\em Arch. Rat. Mech. Anal.}, 147, 2:119--178, 1999.

\bibitem{Ni6}
B.~Niethammer.
\newblock {Effective theories for Ostwald ripening}.
\newblock In {\em {Analysis and Stochastics of Growth Processes and Interface
  Models}}. {Oxford University Press}, 2008.
\newblock to appear.

\bibitem{NP2}
B.~Niethammer and R.~L. Pego.
\newblock {Non--self--similar behavior in the LSW theory of Ostwald ripening}.
\newblock {\em J. Stat. Phys.}, 95, 5/6:867--902, 1999.

\bibitem{NV52}
B.~Niethammer and J.~J.~L. Vel\'azquez.
\newblock {On the convergence towards the smooth self-similar solution in the
  LSW model}.
\newblock {\em Indiana Univ. Math. J.}, 55:761--794, 2006.

\bibitem{NV6}
B.~Niethammer and J.~J.~L. Vel\'azquez.
\newblock {On screening induced fluctuations in Ostwald ripening}.
\newblock {\em J. Stat. Phys.}, 130, 3:415--453, 2008.

\bibitem{Ve1}
J.~J.~L. Vel\'azquez.
\newblock {The Becker--D\"oring equations and the Lifshitz--Slyozov theory of
  coarsening}.
  \newblock {\em J. Stat. Phys.}, 92:195--236, 1998.

\bibitem{Wa1}
C.~Wagner.
\newblock {Theorie der Alterung von Niederschl\"agen durch Uml\"osen}.
\newblock {\em Z. Elektrochemie}, 65:581--594, 1961.

\end{thebibliography}
\end{document}